\theoremstyle{plain}
\newtheorem{thm}{Theorem}[section]
\newtheorem{lem}[thm]{Lemma}
\theoremstyle{definition}
\newtheorem{rem}[thm]{Remark}
\newtheorem{defi}[thm]{Definition}
\numberwithin{thm}{section}
\numberwithin{equation}{section}
\newcommand\ddfrac[2]{\frac{\displaystyle #1}{\displaystyle #2}}
\def\supp{\operatorname{supp}}
\def\esup{\operatornamewithlimits{ess\,sup}}
\def\RHS{\operatorname{RHS}}
\def\LHS{\operatorname{LHS}}
\def\Ces{\operatorname{Ces}}
\def\Cop{\operatorname{Cop}}
\begin{document}
\author{Amiran Gogatishvili\and Tu\u{g}\c{c}e \"{U}nver}

\title{Weighted inequalities involving two Hardy operators}


\address{A. Gogatishvili,
Institute of Mathematical of the  Czech Academy of Sciences, \v Zitn\'a~25, 115~67 Praha~1, Czech Republic}
\email{gogatish@math.cas.cz}

\address{T. \"{U}nver,
Department of Mathematics, Kirikkale University, 71450 Yahsihan, Kirikkale, T\" urkiye}
\email{tugceunver@kku.edu.tr}
		
\subjclass[2000]{26D10, 46E20}
\keywords{Ces\` aro function spaces, Copson function spaces, local Morrey-type spaces, embeddings, weighted inequalities, iterated operators, discretization}
\thanks{The research of  A.~Gogatishvili 
was partially supported by  the grant project 23-04720S of the Czech Science Foundation (GA\v{C}R),  The Institute of Mathematics, CAS is supported  by RVO:67985840, by  Shota Rustaveli National Science Foundation (SRNSF), grant no: FR22-17770, and by the
grant Ministry of Education and Science of the Republic of Kazakhstan (project no.
AP14869887).
The research of T.~\"{U}nver was supported by the grant of The Scientific and Technological Research Council of Turkey (TUBITAK), Grant No: 1059B192000075.}
  
\begin{abstract}
We find necessary and sufficient conditions on weights
$u_1, u_2, v_1, v_2$, i.e.  measurable, positive, and finite, a.e. on $(a,b)$, for which there exists a positive constant $C$ such that for given $0 < p_1,q_1,p_2,q_2 <\infty$ the inequality
\begin{equation*}
\begin{split}
\bigg(\int_a^b \bigg(\int_a^t f(s)^{p_2} v_2(s)^{p_2} ds\bigg)^{\frac{q_2}{p_2}} u_2(t)^{q_2} dt \bigg)^{\frac{1}{q_2}}& \\ & \hspace{-3cm}\le 
C \bigg(\int_a^b \bigg(\int_a^t f(s)^{p_1} v_1(s)^{p_1} ds\bigg)^{\frac{q_1}{p_1}} u_1(t)^{q_1} dt \bigg)^{\frac{1}{q_1}}
\end{split}
\end{equation*}
holds for every non-negative, measurable function $f$ on $(a,b)$, where $0 \le a <b \le \infty$. The proof is based on a recently developed discretization method that enables us to overcome the restrictions of the earlier results.
\end{abstract}
	
\maketitle

\medskip

\noindent{\textbf{\emph{
Dedicated to Professor Lars-Erik Persson on the occasion of his 80th birthday.}}}
	
\medskip

\baselineskip=\normalbaselineskip

\section{Introduction}

Let $0 \leq a < b \leq \infty$.  Let $\mathcal{M}$ be the set of all measurable functions on $(a,b)$ and $\mathcal{M}^+$ be the collection of all non-negative functions in $\mathcal{M}$. The functions that are measurable, positive, and finite, a.e. on $(a,b)$ will be referred to as weight functions (or simply weights). For parameters $p,q \in (0,\infty)$ and weights $u,v$, the \textit{weighted Ces\`{a}ro function space} $\Ces_{p,q}(u,v)$ is the collection of all functions $f\in \mathcal{M}$ such that
\begin{equation*}
\|f\|_{\Ces_{p,q}(u,v)}= \bigg(\int_a^b \bigg(\int_a^t |f(s)|^p v(s)^p ds\bigg)^{\frac{q}{p}} u(t)^q dt \bigg)^{\frac{1}{q}}< \infty,
\end{equation*}
and the \textit{weighted Copson function space} $\Cop_{p,q}(u,v)$, is the collection of all functions $f\in \mathcal{M}$ such that
\begin{equation*}
\|f\|_{\Cop_{p,q}(u,v)}= \bigg(\int_a^b \bigg(\int_t^b |f(s)|^p v(s)^p ds\bigg)^{\frac{q}{p}} u(t)^q dt \bigg)^{\frac{1}{q}} < \infty.
\end{equation*}
The classical Ces\`{a}ro sequence and function spaces have a lengthy history. For the details on the history of classical Ces\`{a}ro function spaces, refer to the survey study \cite{As-Ma:14} and \cite{GU:24:CesCes_short}. Additionally, the history and the relations of $\Ces_{p,q}(u,v)$ and $\Cop_{p,q}(u,v)$ spaces are thoroughly covered in the recent paper \cite{GU:24:CesCes_short}.  

Our goal in this paper is to give a characterization of the embedding  
$$\Ces_{p_1,q_1}(u_1,v_1)\hookrightarrow \Ces_{p_2,q_2}(u_2,v_2),$$
in other words, to formulate necessary and sufficient conditions on $u_1, u_2, v_1, v_2$ for given $p_1, p_2, q_1, q_2$, so that
\begin{equation}\label{Ces-Ces}
   \|f\|_{\Ces_{p_2,q_2}(u_2,v_2)} \leq c \|f\|_{\Ces_{p_1,q_1}(u_1,v_1)} 
\end{equation}
holds for every $f \in \mathcal{M}^+$. The optimal constant $c$ in \eqref{Ces-Ces} is 
\begin{align}\label{C}
c := \sup_{\substack{f\in \mathcal{M}^+(a,b)\\f\neq 0}} \dfrac{\|f\|_{\Ces_{p_2,q_2}(u_2,v_2)}}{\|f\|_{\Ces_{p_1,q_1}(u_1,v_1)}} =\sup_{\substack{f\in \mathcal{M}^+(a,b)\\f\neq 0}} \ddfrac{\bigg(\int_a^b \bigg(\int_a^t f(s)^{p_2} v_2(s)^{p_2} ds\bigg)^{\frac{q_2}{p_2}} u_2(t)^{q_2} dt \bigg)^{\frac{1}{q_2}}}{\bigg(\int_a^b \bigg(\int_a^t f(s)^{p_1} v_1(s)^{p_1} ds\bigg)^{\frac{q_1}{p_1}} u_1(t)^{q_1} dt \bigg)^{\frac{1}{q_1}}}.
\end{align}

The characterization of the embeddings $\Cop_{p_1,q_1}(u_1,v_1)\hookrightarrow \Ces_{p_2,q_2}(u_2,v_2)$ was given in \cite{Ca-Go-Ma-Pi:08}, \cite{Go-Mu-Un:17},  \cite{GPU-JFA}. It is noteworthy that when $p_1 = q_1$ or $p_2 = q_2$, inequality \eqref{Ces-Ces} reduces to Hardy and reverse Hardy inequalities, respectively.  For a comprehensive history and development of Hardy-type inequalities, refer to \cite{ok}, \cite{Ku-Pe-Sa:17}. The inequality \eqref{Ces-Ces} was initially studied in \cite{Un:20} using the duality argument, which only works in the case when $p_2 \le q_2$. The case $p_2>q_2$ remained open until recently. In \cite{GU:24:CesCes_short}, the characterization without parameter restriction was announced without providing proof. In this paper, we will present a detailed proof of this result.

It was shown in \cite{GU:24:CesCes_short} that \eqref{Ces-Ces} is equivalent to the inequality
\begin{equation}\label{main}
 \bigg(\int_a^b \bigg(\int_a^t f(s)^r v(s) ds \bigg)^{\frac{q}{r}} u(t) dt \bigg)^{\frac{1}{q}} \leq C\bigg(\int_a^b \bigg(\int_a^t f(s) ds \bigg)^{p} w(t) dt\bigg)^{\frac{1}{p}},
\end{equation}
and the optimal constant $ C$ in \eqref{main} satisfies $c^{p_1} =  C$, where $c$ is defined in \eqref{C}. In this paper, we focus on the characterization of \eqref{main}. We improve the discretization technique given in \cite{GMPTU:JFAA, GPU-JFA, Go-Un:22} and adapt it to the inequality \eqref{main}. 

We should mention that by the change of variables, \eqref{Ces-Ces} allows us to characterize the embedding between weighted Copson function spaces (see, \cite{GU:24:CesCes_short}). Moreover, the characterization of \eqref{main} enables us to investigate the embeddings between local Morrey-type spaces. 

Let $0< p, q < \infty$. Suppose that $u$ is a non-negative, measurable function on $(0, \infty)$ and $v$ is a weight 
function on $\mathbb{R}^n$. The weighted local Morrey-type space $LM_{pq,u}(\mathbb{R}^n,v)$ is defined as the collection of all measurable function $f$ on $\mathbb{R}^n$ such that
$$
\|f\|_{LM_{pq,u}(\mathbb{R}^n,v)}  =  \bigg( \int_0^{\infty} \bigg(  \int_{B(0,t)} |f(s)|^{p} v(s)^p ds 
\bigg)^{\frac{q}{p}} u(t)^q \,dt \bigg)^{\frac{1}{q}} < {\infty}.
$$

It was shown in \cite{GU_NS} that the characterization of 
\begin{equation}\label{LM-LM}
\|f\|_{LM_{p_2q_2,u_2}(\mathbb{R}^n,v_2)}\leq c \|f\|_{LM_{p_1q_1,u_1}(\mathbb{R}^n,v_1)}
\end{equation}
follows easily from \eqref{Ces-Ces}. However, since the case $q_2< p_2$ for \eqref{Ces-Ces} was not known at that time, the characterization of \eqref{LM-LM} was given when $p_2 \le q_2$. The full characterization of \eqref{Ces-Ces} allows us to remove the parameter restriction also in \eqref{LM-LM}.

Let us mention the structure of the paper. In Section~\ref{S:main-results}, we formulate the main results. In Section~\ref{S:BackMat}, we provide basic definitions and results that are independently useful.  In Section~\ref{Disc.c}, we describe the elements of the discretization method and provide the discrete characterization of inequality \eqref{main}.  Finally, we present the proofs of the main results in Section~\ref{S:Proofs}.

We will use the following conventions. We write $A \lesssim B$ if there exists a constant $C\in (0, \infty)$ independent of appropriate quantities such that $A \leq C B$. We write $A \approx B$ if we have both $A \lesssim B $ and $B \lesssim A$.  We put $1/ (\pm \infty) =0$, $0/0 =0$, $0\cdot (\pm \infty)=0$. Moreover, we omit arguments of integrands as well as differentials in integrals when appropriate in order to keep the expository as short as possible. We will denote by $\LHS(*)$ and $\RHS(*)$, the left-hand side and right-hand side of an inequality $(*)$, respectively.

\section{Main results}\label{S:main-results}

For $0\leq a \leq x < y \leq  b \leq \infty$, and weights $v,w$ on $(a,b)$, we define
\begin{align}\label{Vr}
V_r(x,y) =
\begin{cases}
\big(\int_{x}^{y} v^{\frac{1}{1-r}}\big)^{\frac{1-r}{r}} & \text{if \, $0<r<1$,}
\\
\esup\limits_{s \in (x, y)} v(s) & \text{if \, $r=1$,}
\end{cases}
\end{align}
and 
\begin{equation}
V_r(x, y+) = \lim_{t \to y+} V_r(x,t).
\end{equation}
Note, that if $0<r<1$, the functions  $V_r(x,y)$ is continuous, i.e.  $V_r(x,y)=V_r(x, y+)$. 

Denote by
\begin{equation*}
\mathcal{W}(x):= \int_x^b w(t) dt, \, x\in [a,b].
\end{equation*}

\begin{thm}\label{T:main}
Let $0 < r \leq 1$, $0 < p, q < \infty$ and assume that $u,v,w$ are weights on $(a,b)$ such that $0 < \mathcal{W}(x)<\infty$ for all $x\in (a,b)$. Then inequality \eqref{main} holds for all $f\in \mathcal{M}^+$ if and only if

{\rm(i)} either  $p \leq r \leq 1 \leq q$ and $C_1< \infty$, where
\begin{equation}\label{C1}
C_1 :=  \esup_{x\in (a,b)} \mathcal{W}(x)^{-\frac{1}{p}} \esup_{t\in (x,b)} \bigg( \int_t^b u \bigg)^{\frac{1}{q}}  V_r(x,t)
\end{equation}

{\rm(ii)} or  $p\leq q <1$, and $p \leq r\leq 1$ and $C_2 < \infty$, where
\begin{equation}\label{C2}
C_2 :=  \sup_{x\in (a,b)} \mathcal{W}(x)^{-\frac{1}{p}} \bigg( \int_x^b \bigg( \int_t^b u \bigg)^{\frac{q}{1-q}} u(t) V_r(x,t)^{\frac{q}{1-q}} dt \bigg)^{\frac{1-q}{q}}
\end{equation}

{\rm(iii)} or  $r < p \leq q$, $r \leq 1 \leq q$ and $\max\{C_1, C_3\} < \infty$, where $C_1$ is defined in \eqref{C1} and
\begin{equation}\label{C3}
C_3 :=  \sup_{x \in (a,b)} \bigg(\int_{x}^b u\bigg)^{\frac{1}{q}} \bigg(\int_a^{x} \mathcal{W}(t)^{-\frac{p}{p-r}} w(t) V_r(t,x)^{\frac{pr}{p-r}} dt \bigg)^{\frac{p-r}{pr}}
\end{equation}

{\rm(iv)} or $r < p \leq q < 1$ and $\max\{C_2, C_3\} < \infty$, where $C_2$ and $C_3$ are defined in \eqref{C2} and \eqref{C3}, respectively,

{\rm(v)} or $q < p \leq r \leq 1$ and $\max\{C_4, C_5\} < \infty$, where
\begin{equation*}
C_4 := \bigg(\int_a^{b} \bigg(\int_x^b u\bigg)^{\frac{q}{p-q}} u(x)  \esup_{t\in (a, x)}  \mathcal{W}(t)^{-\frac{q}{p-q}} V_r(t,x)^{\frac{pq}{p-q}} dx \bigg)^{\frac{p-q}{pq}}
\end{equation*}
and
\begin{align}\label{C5}
C_5 &:=  \bigg(\int_a^b  w(x) \esup_{y\in (a,x)} \mathcal{W}(y)^{-\frac{p}{p-q}}\bigg( \int_{y}^x \bigg( \int_t^x u \bigg)^{\frac{q}{1-q}} u(t) V_r(y,t)^{\frac{q}{1-q}} dt \bigg)^{\frac{p(1-q)}{p-q}} dx \bigg)^{\frac{p-q}{pq}}
\end{align}

{\rm(vi)} or  $q < p$, $q < 1$, $r < p$, $r \leq 1$ and $\max\{C_1, C_5, C_6\} < \infty$, where  $C_1$ and $C_5$  are defined in  \eqref{C1} and \eqref{C5} respectively, and
\begin{align}\label{C6}
C_6 &:= \Bigg(\int_a^{b} w(x) \esup_{ y\in(a, x)} \mathcal{W}(y)^{-1} \Bigg(\int_y^{x} 
\bigg(\int_t^b u\bigg)^{\frac{q}{p-q}} u(t)dt \bigg) \notag\\
& \hskip+3cm \times \bigg(\int_a^y  \mathcal{W}(t)^{-\frac{p}{p-r}} w(t)V_r(t,y)^{\frac{pr}{p-r}}dt\bigg)^{\frac{q(p-r)}{r(p-q)}} dx \Bigg)^{\frac{p-q}{pq}}
\end{align}

{\rm(vii)} or $r \leq 1 \leq q < p$ and $\max\{ C_1, C_6, C_7\} < \infty$, where  $C_1$ and $C_6$ are  defined in  \eqref{C1} and \eqref{C6} respectively, and 
\begin{align*}
C_7 &:= \Bigg(\int_a^{b}  w(x) \esup_{y\in(a,x) } \mathcal{W}(y)^{-\frac{p}{p-q}}  \bigg(\esup_{t\in(y,x) }  \bigg( \int_t^{x} u \bigg)^{\frac{p}{p-q}} V_r(y,t) ^{\frac{pq}{p-q}}\bigg) dx \Bigg)^{\frac{p-q}{pq}}.
\end{align*}

Moreover, the best constant $C$  in~\eqref{main} satisfies
\begin{equation*}
C\approx
\begin{cases}
	C_1, &\text{in the case \textup{(i)},}\\
	C_2, &\text{in the case \textup{(ii)},}\\
	C_1 + C_3, &\text{in the case \textup{(iii)},}\\
	C_2 + C_3,  &\text{in the case \textup{(iv)},}\\
	C_4 + C_5,  &\text{in the case \textup{(v)},}\\
	C_1 + C_5 + C_6, &\text{in the case \textup{(vi)},}\\
	C_1+ C_6 + C_7, &\text{in the case \textup{(vii)},}
\end{cases}
\end{equation*}
and the multiplicative constants depend only on $p,q,r$.
\end{thm}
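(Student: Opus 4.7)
The plan is to prove Theorem~\ref{T:main} by reducing the continuous inequality \eqref{main} to the discrete characterization furnished in Section~\ref{Disc.c}, and then, in each of the seven parameter ranges, antidiscretizing the resulting sum conditions into the continuous quantities $C_1,\dots,C_7$.

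First, I exploit the assumption $0<\mathcal W(x)<\infty$ on $(a,b)$: since $\mathcal W$ is then continuous and strictly decreasing, I can build a covering sequence $\{x_k\}_{k\in K}\subset(a,b)$, $K\subset\mathbb Z$, with $\mathcal W(x_k)=2^{-k}$ on its range (with truncations at the endpoints). On each block $(x_k,x_{k+1})$ the weight $\mathcal W$ is constant up to a factor of $2$, and the inner Hardy integral splits as $\int_a^t\varphi=\int_a^{x_k}\varphi+\int_{x_k}^t\varphi$. Feeding this into \eqref{main} and applying the discrete characterization from Section~\ref{Disc.c} produces an equivalent finite list of discrete conditions involving block functionals of $u,v$ and the scales $\mathcal W(x_k)$.

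Second, I treat the seven cases individually. When $p\le r$ or $p\le q$ (cases (i)--(ii)), Jensen/Minkowski reductions collapse the discrete condition to a single supremum or integral whose antidiscretization along $\{x_k\}$ yields $C_1$ or $C_2$. When $r<p\le q$ (cases (iii)--(iv)), an additional discrete Hardy inequality controls the inner scale and produces the extra condition $C_3$. When $q<p$ (cases (v)--(vii)), a more delicate argument using a discrete dual characterization (combined with an $\ell^{q/(1-q)}$-summation when $q<1$) yields one, two, or three distinct conditions among $C_4,\dots,C_7$, depending on the interplay of $r,p,q$. The passage from discrete to continuous is carried out by comparison lemmas of the schematic form
\begin{equation*}
\sum_k\mathcal W(x_k)^{\alpha}\Big(\int_{x_k}^{x_{k+1}}\varphi\Big)^{\beta}\approx\int_a^b\mathcal W(t)^{\alpha-1}w(t)\Big(\int_{x(t)}^{t}\varphi\Big)^{\beta}dt,
\end{equation*}
with $x(t)$ the predecessor of $t$ in the covering sequence, together with analogous equivalences for essential suprema.

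The main technical obstacle is cases (vi)--(vii), where three conditions must be simultaneously necessary and jointly sufficient. Sufficiency is proved by carefully splitting the left-hand side of the discrete inequality into pieces, each controlled by one of $C_1,C_5,C_6,C_7$, while necessity is obtained by testing \eqref{main} against trial functions built from indicators of covering blocks and optimizing over the level. A recurring subtlety is the nested-$\esup$ structure in $C_1,C_4,\dots,C_7$, which arises from interchanging suprema with the discrete layering; one must verify that $V_r(x,y+)=V_r(x,y)$ at the block endpoints (automatic for $r<1$; handled by a left/right limit argument for $r=1$) so that the antidiscretized expressions are unambiguous. Tracking that the final equivalence constants depend only on $p,q,r$ is secured by fixing the geometric ratio $2$ of the covering at the outset, so that every discrete Hardy reduction contributes only a $p,q,r$-dependent factor.
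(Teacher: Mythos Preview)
Your plan is correct and is precisely the paper's approach: reduce \eqref{main} to the discrete characterization of Theorem~\ref{C:discrete solutions} via a discretizing sequence of $\mathcal W$, then in each of the seven cases prove the two-sided equivalence between the discrete quantities $A_i+B_j$ and the continuous $C_k$'s using the comparison lemmas of Sections~\ref{S:BackMat}--\ref{Disc.c}. One small correction: in cases (v)--(vii) the paper does not establish necessity by testing \eqref{main} with trial functions, but rather shows directly that each $C_k$ is dominated by the appropriate $A_i+B_j$ (and vice versa) through the block estimates of Lemmas~\ref{cutinglem}, \ref{disc-sup} and \eqref{disc_sum<int}--\eqref{disc_int<sum}; the trial-function work is already absorbed into Lemma~\ref{L:equiv. ineq.}.
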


\begin{rem}
Note that inequality \eqref{main} holds only for trivial  weights if $r>1$. Hence the assumption that $0 < r \leq 1$, which applies throughout the paper constitutes no restriction (cf.~\cite[Lemma~1]{Un:20}).
\end{rem}

In the next theorem, we provide an equivalent condition for Theorem~\ref{T:main} case (vi), when in addition $r\leq q$ and $p<1$. This characterization will be helpful for future applications, such as expressing multipliers between Ces\`{a}ro spaces.

\begin{thm}\label{T:equiv.solut.}
Let $0 < r \leq q <  p < 1 $ and assume that $u,v,w$ are weights on $(a,b)$ such that $0 < \mathcal{W}(x)<\infty$ for all $x\in (a,b)$. Then inequality \eqref{main} holds for all $f\in \mathcal{M}^+$ if and only if $\max\{\mathcal{C}_5,  \mathcal{C}_6\} < \infty$, where
\begin{align*}
\mathcal{C}_5 &:= \bigg(\int_a^{b} \mathcal{W}(x)^{-\frac{p}{p-q}} w(x) \bigg( \int_x^{b} \bigg( \int_t^{b} u \bigg)^{\frac{q}{1-q}} u(t) V_r(x,t)^{\frac{q}{1-q}} dt \bigg)^{\frac{p(1-q)}{p-q}} dx \bigg)^{\frac{p-q}{pq}}
\notag\\
& \hspace{1cm} + \mathcal{W}(a)^{-\frac{1}{p}} \bigg( \int_a^{b} \bigg( \int_t^{b} u \bigg)^{\frac{q}{1-q}} u(t) V_r(a, t)^{\frac{q}{1-q}}  dt \bigg)^{\frac{1-q}{q}}
\end{align*}
and
\begin{equation*}
\mathcal{C}_6 := \bigg(\int_a^{b} \bigg(\int_x^{b} u\bigg)^{\frac{q}{p-q}} u(x)  \bigg(\int_a^x \mathcal{W}(t)^{-\frac{p}{p-r}} w(t) V_r(t, x)^{\frac{pr}{p-r}} dt \bigg)^{\frac{q(p-r)}{r(p-q)}} dx\bigg)^{\frac{p-q}{pq}}.
\end{equation*}
Moreover, the best constant $C$ in \eqref{main} satisfies $C\approx \mathcal{C}_5 + \mathcal{C}_6$, and the multiplicative constants depend only on $p,q,r$.
\end{thm}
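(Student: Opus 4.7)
The plan is to deduce Theorem~\ref{T:equiv.solut.} directly from case~(vi) of Theorem~\ref{T:main}. Since the hypotheses $0 < r \le q < p < 1$ imply $q<p$, $q<1$, $r<p$, and $r\le 1$, case~(vi) applies, so \eqref{main} holds for every $f \in \mathcal{M}^+$ if and only if $\max\{C_1,C_5,C_6\}<\infty$, with best constant $C \approx C_1+C_5+C_6$. It therefore suffices to establish the quantitative equivalence
\begin{equation*}
C_1 + C_5 + C_6 \approx \mathcal{C}_5 + \mathcal{C}_6,
\end{equation*}
with constants depending only on $p,q,r$. The central technical tool will be a Hardy-type conversion, valid for $0<q<1$, between essential suprema and weighted integrals: for a nonincreasing function $\Phi$ vanishing at the right endpoint and a nondecreasing function $\Psi$,
\begin{equation*}
\esup_{t}\,\Phi(t)^{1/q}\Psi(t) \;\approx\; \bigg(\int \Phi(t)^{q/(1-q)}\,(-d\Phi(t))\,\Psi(t)^{q/(1-q)}\bigg)^{(1-q)/q},
\end{equation*}
with constants depending only on $q$; this should be available among the auxiliary results of Section~\ref{S:BackMat}.

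For the direction $\mathcal{C}_5+\mathcal{C}_6 \lesssim C_1+C_5+C_6$, the boundary term of $\mathcal{C}_5$ is controlled by $C_1$ by taking the limit $x \to a^+$ in the outer essential supremum defining $C_1$ and applying the conversion lemma with $\Phi(t)=\int_t^b u$ and $\Psi(t)=V_r(a,t)$. The main integral part of $\mathcal{C}_5$ is dominated by $C_5$ since, at each fixed $x$, its integrand is majorized by the value of the $\esup_{y\in(a,x)}$ inside the integrand of $C_5^{pq/(p-q)}$, upon enlarging the inner integration interval from $(x,b)$ to $(y,b)$ and exploiting the monotonicity of $V_r(\cdot,t)$ in its first argument. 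The bound $\mathcal{C}_6 \lesssim C_6$ follows from a Fubini exchange of the outer $x$-integral with the inner $\int_y^x$ integration, after which the supremum in $y$ saturates at $y=x$; the hypothesis $r\le q$ is what keeps the exponents $p(1-q)/(p-q)$ and $q(p-r)/(r(p-q))$ in a compatible range for these pointwise comparisons.

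The reverse direction $C_1+C_5+C_6 \lesssim \mathcal{C}_5+\mathcal{C}_6$ is the main obstacle. The estimate $C_1\lesssim \mathcal{C}_5$ requires a second application of the conversion lemma, this time to the outer $\esup_{x\in(a,b)}\mathcal{W}(x)^{-1/p}$, recasting it as a weighted integral against $\mathcal{W}(x)^{-p/(p-q)}w(x)$; this is legitimate because $w=-\mathcal{W}'$ and because $p<1$ places the resulting exponent in the range where such a Hardy-type passage is valid. The more delicate estimates $C_5 \lesssim \mathcal{C}_5$ and $C_6 \lesssim \mathcal{C}_5+\mathcal{C}_6$ require removing the inner essential suprema $\esup_{y\in(a,x)}$ inside the outer $x$-integrals and reindexing via Fubini, which I would accomplish by a partition/discretization argument in the spirit of Section~\ref{Disc.c}, combined with a reverse Minkowski inequality whose validity is ensured precisely by the hypothesis $p<1$. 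The restriction $r\le q$ is what aligns the inner exponents of the resulting integrals with those prescribed in $\mathcal{C}_5$ and $\mathcal{C}_6$; without it, the reindexing produces expressions that genuinely split (as they do in the general case~(vi) of Theorem~\ref{T:main}) and cannot be absorbed into a two-condition characterization.
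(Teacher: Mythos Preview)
Your overall strategy---reduce to case~(vi) of Theorem~\ref{T:main} and then establish $C_1+C_5+C_6\approx\mathcal{C}_5+\mathcal{C}_6$---is reasonable, and the direction $C_1+C_5+C_6\lesssim\mathcal{C}_5+\mathcal{C}_6$ is indeed what the paper does (showing $C_1\lesssim\mathcal{C}_5$, $C_5\lesssim\mathcal{C}_5$, $C_6\lesssim\mathcal{C}_6$ by continuous manipulations). But your plan for the reverse direction $\mathcal{C}_5+\mathcal{C}_6\lesssim C_1+C_5+C_6$ breaks down at every step.

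First, the two--sided ``conversion lemma'' you invoke does not exist. Only the direction
\[
\esup_{t}\Phi(t)^{1/q}\Psi(t)\;\lesssim\;\Big(\int\Phi(t)^{\frac{q}{1-q}}(-d\Phi(t))\,\Psi(t)^{\frac{q}{1-q}}\Big)^{\frac{1-q}{q}}
\]
holds (this is \eqref{GPU-6.3}); the reverse inequality is false. A concrete obstruction: on $(0,1)$ take $\Phi(t)=1-t$ and $\Psi(t)=(1-t)^{-1/q+\varepsilon}$; then the supremum equals $1$ for every $\varepsilon>0$, while the integral is $\approx\varepsilon^{-1}$, so the ratio is unbounded. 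Hence the boundary term of $\mathcal{C}_5$ cannot be controlled by $C_1$ in the way you describe. Second, your bound of the main part of $\mathcal{C}_5$ by $C_5$ cannot work pointwise: the inner integral in $\mathcal{C}_5$ runs over $(x,b)$ with weight $\big(\int_t^b u\big)^{q/(1-q)}$, while the inner integral in $C_5$ runs over $(y,x)$ with weight $\big(\int_t^x u\big)^{q/(1-q)}$; these live on disjoint intervals, so no ``enlarging'' or monotonicity argument connects them. Third, in your treatment of $\mathcal{C}_6\lesssim C_6$, letting ``the supremum in $y$ saturate at $y=x$'' annihilates the factor $\int_y^x(\cdot)\,dt=0$ inside $C_6$, so nothing is recovered.

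The paper does not attempt this direction at the continuous level at all. Instead it passes through the \emph{discrete} characterization $C\approx A_4+B_3$ of Theorem~\ref{C:discrete solutions}\,(vi) and proves $\mathcal{C}_5+\mathcal{C}_6\lesssim A_4+B_3$ directly: one discretizes $\mathcal{C}_5$ and $\mathcal{C}_6$ along the sequence $\{x_k\}$, applies the cutting Lemma~\ref{cutinglem} and the kernel Lemma~\ref{lem_3.7}, and uses the hypothesis $r\le q$ through the elementary bound
\[
\sum_{k} 2^{k\frac{q}{p-q}}V_r(x_{k-1},x_k)^{\frac{pq}{p-q}}
\le\Big(\sum_{k} 2^{k\frac{r}{p-r}}V_r(x_{k-1},x_k)^{\frac{pr}{p-r}}\Big)^{\frac{q(p-r)}{r(p-q)}},
\]
valid precisely because $\frac{q(p-r)}{r(p-q)}\ge 1$. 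Without routing through the discrete conditions (or an equivalent localization device), the estimate $\mathcal{C}_5+\mathcal{C}_6\lesssim C_1+C_5+C_6$ does not follow from the soft arguments you propose.
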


\section{Background material and some useful lemmas}\label{S:BackMat}

Let us introduce some basic and supplemental definitions and results that will be used throughout this paper.  

\begin{defi}
Let $N, M \in \mathbb{Z} \cup \{-\infty, +\infty\}$, $N < M$. A sequence of positive numbers $\{a_k\}_{k=N}^M$ is called \textit{strongly decreasing} if 
\begin{equation*}
\sup\bigg\{ \frac{a_{k+1}}{a_k}, \, N \leq k \leq M\bigg\} < 1,
\end{equation*}
and $\{a_k\}_{k=N}^M$  is called  \textit{strongly increasing} if 
\begin{equation*}
\inf\bigg\{\frac{a_{k+1}}{a_k}, \, N \leq k \leq M\bigg\} > 1.
\end{equation*}
\end{defi}

Throughout the paper, the below lemma will be employed several times. The proofs of \eqref{dec.sum-sum} and \eqref{inc.sum-sum} can be found in \cite[Proposition~2.1]{Go-He-St:96}, for the remaining cases see \cite[Lemmas~3.2-3.4]{Go-Pi:03}. Note that in \cite{Go-Pi:03, Go-He-St:96}, the results are given when $N=-\infty$ and $M=\infty$; however, if $N, M <\infty$, they can be proved by using the same argument. Moreover, \eqref{inc.sup-sup} is a direct consequence of the interchanging the supremum. 

\begin{lem} \cite{Go-Pi:03,Go-He-St:96}
Let $\alpha\in (0,\infty)$ and $N, M \in  \mathbb{Z} \cup \{-\infty, +\infty\}$, $N < M$. Assume that $\{a_k\}_{k=N}^M$ and  $\{b_k\}_{k=N}^M$ are sequences of positive numbers.

{\rm(i)} If $\{a_k\}_{k=N}^M$ is strongly decreasing, then
\begin{equation}\label{dec.sum-sum}
\sum_{k=N}^M a_k \bigg(\sum_{i=N}^k b_i \bigg)^\alpha  \approx \sum_{k=N}^M a_k b_k^\alpha,
\end{equation}
\begin{equation} \label{dec.sum-sup}
\sum_{k=N}^M a_k \sup_{N \leq i\leq k} b_i^{\alpha}   \approx \sum_{k=N}^M a_k b_k^{\alpha}
\end{equation}
and
\begin{equation}\label{dec.sup-sum}
\sup_{N \leq k\leq M} a_k \bigg(\sum_{i=N}^k b_i \bigg)^\alpha \approx \sup_{N \leq k\leq M} a_k b_k^\alpha.
\end{equation}

{\rm(ii)} If $\{a_k\}_{k=N}^M$ is non-decreasing, then
\begin{equation} \label{inc.sup-sup}
\sup_{N \leq k \leq M} a_k \sup_{k \leq i \leq M} b_i =  \sup_{N \leq k \leq M} a_k b_k.
\end{equation}
If, in addition, $\{a_k\}_{k=N}^M$ is strongly increasing, then
\begin{equation}\label{inc.sum-sum}
\sum_{k=N}^M a_k \bigg(\sum_{i=k}^M b_i \bigg)^\alpha  \approx \sum_{k=N}^M a_k b_k^\alpha
\end{equation}
\begin{equation}\label{inc.sup-sum}
\sup_{N \leq k\leq M} a_k \bigg(\sum_{i=k}^M b_i \bigg)^\alpha \approx \sup_{N \leq k\leq M} a_k b_k^\alpha
\end{equation}
and
\begin{equation} \label{inc.sum-sup}
\sum_{k=N}^M a_k \sup_{k \leq i\leq M} b_i^{\alpha}   \approx \sum_{k=N}^M a_k b_k^{\alpha}.
\end{equation}
\end{lem}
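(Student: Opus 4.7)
The strategy is uniform across all the claimed equivalences: exploit the geometric decay/growth forced by strong monotonicity, combine it with Fubini-type interchange of summation, and use subadditivity or H\"older depending on whether $\alpha\leq 1$ or $\alpha>1$. In every case one direction is trivial, since the sum (or supremum) over $i$ dominates the single term $b_k$; only the reverse bound needs work. Throughout the argument I would first reduce to the bi-infinite case by extending $\{a_k\},\{b_k\}$ by zeros when $N$ or $M$ is finite; every estimate below is an inequality on subsums, so this truncation costs nothing.

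For part (i) the central observation is that $\theta := \sup_k a_{k+1}/a_k < 1$ implies $a_k \leq \theta^{k-j} a_j$ whenever $k \geq j$, and in particular $\sum_{k\geq i} a_k \lesssim a_i$ with constant $(1-\theta)^{-1}$. For \eqref{dec.sum-sup}, dominate $\sup_{i\leq k} b_i^\alpha \leq \sum_{i\leq k} b_i^\alpha$, interchange, and collapse $\sum_{k\geq i} a_k$ to $a_i$. For \eqref{dec.sum-sum} with $\alpha\leq 1$, subadditivity $(\sum_{i\leq k} b_i)^\alpha \leq \sum_{i\leq k} b_i^\alpha$ reduces everything to the previous step; with $\alpha>1$, apply H\"older with the geometric weight $\mu^{k-i}$ for a fixed $\mu\in(1,1/\theta)$, obtaining $(\sum_{i\leq k} b_i)^\alpha \lesssim \sum_{i\leq k} b_i^\alpha\,\mu^{k-i}$, and after interchange sum the tail $\sum_{k\geq i} a_k\mu^{k-i}\lesssim a_i$ because the perturbed sequence $\{a_k\mu^k\}$ is still strongly decreasing by the choice $\mu\theta<1$. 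For \eqref{dec.sup-sum} one absorbs the outer factor:
\begin{equation*}
a_k^{1/\alpha}\sum_{i\leq k} b_i \;=\; \sum_{i\leq k} \Bigl(\tfrac{a_k}{a_i}\Bigr)^{1/\alpha} a_i^{1/\alpha} b_i \;\leq\; \Bigl(\sum_{j\geq 0}\theta^{j/\alpha}\Bigr)\sup_{i\leq k} a_i^{1/\alpha} b_i,
\end{equation*}
and then raises to the $\alpha$-th power, taking the sup over $k$.

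Part (ii) mirrors part (i). The identity \eqref{inc.sup-sup} is a triviality: since $a_k\leq a_i$ for $k\leq i$, we have $\sup_k a_k\sup_{i\geq k} b_i = \sup_k\sup_{i\geq k} a_k b_i \leq \sup_i a_i b_i$, and the reverse comes from specializing to $i=k$. For \eqref{inc.sum-sum}, \eqref{inc.sup-sum}, and \eqref{inc.sum-sup}, the strongly increasing hypothesis yields $\sum_{k\leq i} a_k \lesssim a_i$ and the arguments from (i) carry over verbatim with the direction of summation reversed: for the $\alpha>1$ case of \eqref{inc.sum-sum} one chooses $\mu\in(1,1/\theta)$ (with $\theta:=\sup_k a_k/a_{k+1}<1$) so that $\{a_k\mu^{-k}\}$ is strongly increasing; for \eqref{inc.sup-sum} the absorption trick uses the reversed ratios $(a_k/a_i)^{1/\alpha}\leq\theta^{(i-k)/\alpha}$.

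The only genuinely delicate step, and the one I expect to be the main obstacle, is the $\alpha>1$ case of \eqref{dec.sum-sum} and \eqref{inc.sum-sum}: the auxiliary geometric weight $\mu^{|k-i|}$ must be chosen so that simultaneously (a) H\"older yields a manageable bound on $(\sum_i b_i)^\alpha$, which needs $\mu>1$, and (b) the weighted sequence $\{a_k\mu^{\pm k}\}$ remains strongly monotone, which needs $\mu\theta<1$. Both constraints are satisfied precisely because the definition of strong monotonicity provides a \emph{strict} inequality $\theta<1$ with margin, which is the quantitative content being exploited. All other steps are bookkeeping via Fubini and telescoping of geometric series.
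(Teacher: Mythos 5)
Your proof is correct, but it is worth saying that the paper does not actually prove this lemma: it simply cites \cite[Proposition~2.1]{Go-He-St:96} for \eqref{dec.sum-sum} and \eqref{inc.sum-sum}, \cite[Lemmas~3.2--3.4]{Go-Pi:03} for the remaining sum/sup estimates, and disposes of \eqref{inc.sup-sup} by interchanging suprema. What you supply is a self-contained version of the standard argument underlying those references, and all the key steps check out: the geometric tail bound $\sum_{k\ge i}a_k\le (1-\theta)^{-1}a_i$ (and its mirror for strongly increasing sequences), subadditivity of $t\mapsto t^{\alpha}$ for $\alpha\le 1$, H\"older against the weight $\mu^{|k-i|}$ with $1<\mu<1/\theta$ so that $\{a_k\mu^{\pm k}\}$ stays strongly monotone for $\alpha>1$, the absorption trick $(a_k/a_i)^{1/\alpha}\le\theta^{|k-i|/\alpha}$ for the sup--sum estimates, and the trivial interchange for \eqref{inc.sup-sup}, which indeed needs only monotonicity. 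Your approach buys explicit constants depending only on $\alpha$ and the strong-monotonicity ratio $\theta$, and it handles finite $N,M$ directly, which the paper only asserts can be done ``by the same argument.'' The one blemish is the opening reduction ``extend by zeros to the bi-infinite case'': as literally stated this destroys both the positivity hypothesis and the strong-monotonicity ratios (one gets ratios $0$ or $\infty$ at the boundary), so it is not a legitimate reduction; fortunately it is also unnecessary, since, as you note, every one of your estimates is performed on the given index interval and works verbatim whether $N,M$ are finite or infinite, so I would simply delete that sentence rather than repair it.
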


\begin{lem} \label{lem_3.7}
Let $N, M \in \mathbb{Z}\cup \{-\infty, +\infty\}$ and $N<M$. Let $\beta \geq 0$. Assume that $\{b_k\}_{k=N}^M$ is a positive sequence, $\{a_k\}_{k=N}^M$ is a strongly increasing sequence and $\{d_{k,i}\}_{k,i=N}^M$ is a regular kernel, that is, a sequence of non-negative numbers, non-increasing in the first variable, non-decreasing in the second variable and satisfies
\begin{equation*}
d_{k,i} \lesssim d_{k,j} + d_{j,i} \quad \textit{for every} \quad i,j,k \in \mathbb{Z}, \,\, k\leq j\leq i.
\end{equation*}
Then
\begin{equation}\label{kersup}
\sup_{N\leq k \leq M} a_k \bigg(\sum_{i=k}^M d_{k,i+1} \, b_i\bigg)^{\beta} \approx \sup_{N\leq k \leq M} a_k \, d_{k,k+1}^{\,\beta} \bigg(\sum_{i=k}^M b_i\bigg)^{\beta} 
\end{equation}
and 
\begin{equation}\label{kersum}
\sum_{k=N}^{M} a_k \bigg(\sum_{i=k}^{M} d_{k,i+1} \, b_i \bigg)^{\beta} \approx \sum_{k=N}^{M} a_k \, d_{k,{k+1}}^{\,\beta}  \bigg( \sum_{i=k}^M  \, b_i \bigg)^{\beta}
\end{equation}
hold.
\end{lem}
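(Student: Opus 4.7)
Proof plan:

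The "$\gtrsim$" direction in both \eqref{kersup} and \eqref{kersum} is immediate. For $i \geq k$, the second-variable monotonicity of the kernel gives $d_{k,i+1} \geq d_{k,k+1}$, whence
\[
\sum_{i=k}^M d_{k,i+1} b_i \;\geq\; d_{k,k+1}\sum_{i=k}^M b_i,
\]
from which both asserted lower bounds follow after raising to the $\beta$-th power and applying $\sup_k a_k$ or $\sum_k a_k$.

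For the reverse inequalities, my strategy has three stages. \emph{Stage 1 (one-step recursion via the quasi-triangle).} Setting $S_k := \sum_{i=k}^M d_{k,i+1} b_i$ and $T_k := d_{k,k+1}\sum_{i=k}^M b_i$, I apply the quasi-triangle assumption with intermediate index $j=k+1$ to obtain $d_{k,i+1} \lesssim d_{k,k+1} + d_{k+1,i+1}$ for every $i \geq k$, and hence the recursion $S_k \lesssim T_k + S_{k+1}$ after multiplying by $b_i$, summing, and using that $d_{k+1,k+1} \leq d_{k,k+1}$ to absorb the boundary term. \emph{Stage 2 (telescoping).} Iterating down to the base case $S_{M+1}=0$ gives a bound of the form $S_k \lesssim \sum_{j=k}^M \kappa^{j-k+1} T_j$, where $\kappa$ denotes the implicit constant in the quasi-triangle inequality.

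\emph{Stage 3 (closing via strong monotonicity).} Inserting this estimate into $\sup_k a_k S_k^\beta$ (resp.\ $\sum_k a_k S_k^\beta$), using $a_k \leq \rho^{j-k} a_j$ with $\rho := \sup_k(a_k/a_{k+1}) \in (0,1)$ from the strong monotonicity of $\{a_k\}$, and shifting the outer weight from $a_k$ to $a_j$ converts the iterated kernel factor $\kappa^{j-k}$ into a geometric series in $\kappa\rho$. The outer exponent $\beta$ is handled by Jensen's inequality when $\beta \geq 1$ and by the subadditivity of $x \mapsto x^\beta$ when $0 < \beta < 1$; either way one arrives at the desired equivalence against $\sup_j a_j T_j^\beta$ or $\sum_j a_j T_j^\beta$, with Fubini-type interchange of the order of summation handling the sum case.

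\emph{Main obstacle.} The geometric series in Stage 3 converges directly only when $\kappa\rho < 1$, and this inequality is not automatic from the hypotheses. The key technical step is therefore to refine Stage 2: iterating the quasi-triangle dyadically in the form $d_{k,k+2n} \leq \kappa\bigl(d_{k,k+n} + d_{k+n,k+2n}\bigr)$ yields, by a standard halving argument, the polynomial growth estimate $d_{k,k+n} \lesssim n^{\log_2(2\kappa)}\!\max_{k\leq j<k+n} d_{j,j+1}$. Since polynomial growth in $j-k$ is always dominated by the geometric decay $\rho^{j-k}$ from strong monotonicity, a multi-step variant of the recursion then closes the loop with a constant depending only on $\kappa$, $\rho$, and $\beta$, yielding both \eqref{kersup} and \eqref{kersum}.
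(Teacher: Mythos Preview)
Your easy direction and Stages 1–2 are fine, and you have correctly identified the obstruction. The gap is in your proposed fix. The dyadic halving does yield a bound of the shape $d_{k,k+n}\lesssim n^{p}\max_{k\le j<k+n}d_{j,j+1}$, but this \emph{max} form is not strong enough to close the loop. Feeding it into $S_k$ gives
\[
S_k\ \lesssim\ \sum_{i\ge k}(i-k+1)^{p}\,b_i\,\max_{k\le j\le i}d_{j,j+1},
\]
and the maximizing index depends on $i$; there is no way to convert this into an estimate of the form $S_k\lesssim\sum_{j\ge k}P(j-k)\,T_j$, because the tail $\sum_{i\ge k}b_i$ sits at level $k$ while $T_j$ requires $\sum_{i\ge j}b_i$, and these go the wrong way for $j\ge k$. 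The same difficulty persists in any ``multi-step'' variant that estimates the block via the max: one ends up with $a_k\bigl(\max_{k\le j<k+m}d_{j,j+1}\bigr)^{\beta}\bigl(\sum_{i\ge k}b_i\bigr)^{\beta}$, which is not dominated by $\sup_j a_jT_j^{\beta}$.

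Two clean repairs are available. The paper's route replaces the max by the \emph{sum} form that the dyadic argument actually delivers (this is \cite[Lemma~3.1]{Go-St:13}): for some $\alpha\in(0,1)$ one has $d_{k,i+1}\lesssim\bigl(\sum_{j=k}^{i}d_{j,j+1}^{\alpha}\bigr)^{1/\alpha}$; since $1/\alpha>1$, Minkowski's inequality then gives
\[
S_k\ \lesssim\ \Bigl(\sum_{j\ge k}d_{j,j+1}^{\alpha}\Bigl(\sum_{i\ge j}b_i\Bigr)^{\alpha}\Bigr)^{1/\alpha},
\]
and a single application of \eqref{inc.sup-sum} (resp.\ \eqref{inc.sum-sum}) with exponent $\beta/\alpha$ finishes both \eqref{kersup} and \eqref{kersum}. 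Alternatively, your multi-step idea can be salvaged \emph{without} any dyadic bound: fix $m$ large enough that $\kappa\rho^{m}<1$, use one quasi-triangle at $j=k+m$ to get $S_k\le E_{k,m}+\kappa S_{k+m}$, and inside the block chain the quasi-triangle \emph{linearly} (not dyadically) to obtain $E_{k,m}\lesssim\kappa^{m}\sum_{j=k}^{k+m-1}T_j$ --- a sum, not a max. Your Stage-3 shift $a_k\le\rho^{j-k}a_j$ together with a Fubini over the finitely many indices in each block then closes the argument; the prefactor $\kappa^{m}$ is harmless because $m$ is fixed. Either way, the polynomial--max estimate is the wrong intermediate object.
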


\begin{proof}
Let $\{d_{k,i}\}_{k,i=N}^M$ be a regular kernel. Using \cite[Lemma~3.1]{Go-St:13}(see also \cite[Lemma~5.1]{GPU:disc}), there exists an $\alpha_c$, $0<\alpha_c<1$, such that for every $0<\alpha \leq \alpha_c$ and $k \leq k+1 \leq \dots \leq i+1$, we have   
\begin{equation*}
d_{k, i+1} \lesssim \bigg(\sum_{j=k}^{i} d_{j,j+1}^{\,\alpha} \bigg)^{\frac{1}{\alpha} }.  
\end{equation*}
Since $1/\alpha >1$, applying Minkowski's inequality, we have for each $k: N\leq k\leq M$
\begin{align}\label{ker-mink}
\sum_{i=k}^M d_{k,i+1} \, b_i \lesssim \sum_{i=k}^M \bigg(\sum_{j=k}^{i} d_{j,j+1}^{\,\alpha} b_i^{\alpha}\bigg)^{\frac{1}{\alpha}} \leq \bigg(\sum_{j=k}^M d_{j,j+1}^{\,\alpha} \bigg(\sum_{i=j}^M  b_i\bigg)^{\alpha}\bigg)^{\frac{1}{\alpha}}.
\end{align}
Thus, using \eqref{ker-mink} and applying \eqref{inc.sup-sum}, we get 
\begin{align*}
\LHS\eqref{kersup}\lesssim  \sup_{N\leq k \leq M} a_k \bigg(\sum_{j=k}^M d_{j,j+1}^{\,\alpha} \bigg(\sum_{i=j}^M  b_i\bigg)^{\alpha}\bigg)^{\frac{\beta}{\alpha}}
\approx \RHS\eqref{kersup}.
\end{align*}
Moreover, using \eqref{ker-mink} once more and applying \eqref{inc.sum-sum}, we obtain
\begin{align*}
\LHS\eqref{kersum} \lesssim    \sum_{k=N}^M a_k \bigg(\sum_{j=k}^M d_{j,j+1}^{\,\alpha} \bigg(\sum_{i=j}^M b_i\bigg)^{\alpha}\bigg)^{\frac{\beta}{\alpha}} \approx \RHS\eqref{kersum}.
\end{align*}
The reverse estimates easily follow from the monotonicity of $\{d_{k,i}\}_{k,i=N}^M$ in the second variable, more precisely, from the fact that for each $k: N\leq k \leq M$
$$
d_{k,k+1} \bigg(\sum_{i=k}^M b_i\bigg) \leq \sum_{i=k}^M d_{k, i+1} b_i.
$$
\end{proof}

The following lemmas will frequently appear in the proofs of the main results. 

\begin{lem}
Let $0 \le\alpha<\infty$, $0 \leq a \leq b \leq c \leq \infty$ and $u$ be a weight. Then, 
\begin{align}\label{u-estimate}
	\int_{a}^{b}  \bigg(\int_{t}^{c} u \bigg)^{\alpha} u(t) dt \le \bigg(\int_{a}^{b} u \bigg) \bigg(\int_{a}^{c} u \bigg)^{\alpha} \le (1+\alpha) \int_{a}^{b}  \bigg(\int_{t}^{c} u \bigg)^{\alpha} u(t) dt
\end{align}
holds. For $-1 < \alpha \leq 0$, the inequalities take the opposite direction. 
\end{lem}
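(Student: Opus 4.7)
The proof splits cleanly into the two inequalities in \eqref{u-estimate}, each of which reduces to an elementary scalar estimate after setting $U(t) := \int_t^c u$, $A := U(a) = \int_a^c u$, and $B := U(b) = \int_b^c u$, so that $A - B = \int_a^b u$ and $0 \le B \le A$.

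For the leftmost inequality, the plan is to exploit the monotonicity $U(t) \le A$ for $t \in (a,b)$: since $\alpha \ge 0$, this gives $U(t)^\alpha \le A^\alpha$ pointwise, and integrating against $u(t)\,dt$ on $(a,b)$ produces exactly $A^\alpha (A-B)$, which is the right-hand side.

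For the rightmost inequality, the plan is to apply the fundamental theorem of calculus to the absolutely continuous function $U^{\alpha+1}$ (recall $\alpha + 1 > 0$), using $U'(t) = -u(t)$ a.e., to obtain
\begin{equation*}
(\alpha+1)\int_a^b U(t)^\alpha u(t)\,dt = A^{\alpha+1} - B^{\alpha+1}.
\end{equation*}
The claim then reduces to the scalar inequality $(A-B)A^\alpha \le A^{\alpha+1} - B^{\alpha+1}$; dividing by $A^{\alpha+1}$ (or checking trivially when $A=0$), this is equivalent to $(B/A)^{\alpha+1} \le B/A$, which is immediate from $B/A \in [0,1]$ and $\alpha+1 \ge 1$.

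The case $-1 < \alpha \le 0$ is handled by the same two reductions with both inequalities reversed: now $U(t)^\alpha \ge A^\alpha$ because $\alpha$ is nonpositive, and the scalar inequality becomes $(B/A)^{\alpha+1} \ge B/A$, valid because $\alpha+1 \in (0,1]$ and $B/A \in [0,1]$. No step presents a real obstacle; the only minor point is handling the boundary situations where $A$ or $B$ equals $0$ or $\infty$, which are dispatched by the paper's conventions $0\cdot\infty = 0$ and $1/\infty = 0$ together with the fact that the identity from the fundamental theorem persists with both sides infinite.
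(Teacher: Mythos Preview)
Your proposal is correct and follows essentially the same route as the paper: both reduce to the identity $(\alpha+1)\int_a^b U^\alpha u = A^{\alpha+1}-B^{\alpha+1}$ together with the elementary scalar inequality $(A-B)A^\alpha \le A^{\alpha+1}-B^{\alpha+1}$ (and its reversal for $-1<\alpha\le 0$). The only cosmetic difference is that the paper quotes the two-sided scalar estimate $(y-x)y^\alpha \le y^{\alpha+1}-x^{\alpha+1}\le (\alpha+1)(y-x)y^\alpha$ from \cite{Ben-GrosErd-decreasing} and derives both inequalities from it, whereas you obtain the left inequality of \eqref{u-estimate} directly from the pointwise bound $U(t)^\alpha\le A^\alpha$, which is slightly more elementary but amounts to the same thing.
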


\begin{proof}
For $0<x<y$ and $\alpha >0$, it is well known that
\begin{equation}\label{new-classical}
 (y-x)y^{\alpha} \leq y^{\alpha +1} - x^{\alpha +1} \leq (\alpha +1)  (y-x)y^{\alpha}  
\end{equation}
(see, for instance, the proof of Lemma 7 from \cite{Ben-GrosErd-decreasing}). Then, choosing $y= \int_a^c u$ and $x=\int_b^c u $,
we have 
\begin{align} 
\bigg(\int_a^c u \bigg) ^{\alpha }\left(  \int_a^b u \right)
\le\bigg(\int_a^cu \bigg)^{\alpha +1} -\bigg( \int_b^c u \bigg)^{\alpha+1} \le (\alpha+1) \bigg(\int_a^c u \bigg)^{\alpha }\left(  \int_a^b u \right) \label{alpha>0-2}
\end{align}
Since 
$$ \bigg(\int_a^cu \bigg)^{\alpha +1} -\bigg( \int_b^c u \bigg)^{\alpha+1} 
=(\alpha+1)\int_a^b  \bigg(\int_t^{c} u \bigg)^{\alpha} u(t) dt,
$$ 
we arrive at \eqref{u-estimate}. 
\end{proof}

\begin{lem}\label{cutinglem}
Let $0<\beta<\infty$ and  $0 \leq a \le c\le d\le b \leq \infty$. Assume that $k(x,y)\colon [a,b]^2 \mapsto [0,\infty)$ is a regular kernel, that is a measurable function that is non-increasing in the first variable and non-decreasing in the second variable and satisfies 
\begin{equation*}
k(x,z) \lesssim k(x,y) + k(y,z) \quad \textit{for every} \quad a\leq x<y<z \leq b.
\end{equation*}
Then
\begin{align}\label{lemma 3.5 ineq}
\int_{c}^{d} \left( \int_t^{b}u \right)^{\beta} u(t) k(a,t) dt&\lesssim \int_{c}^{d} \left( \int_t^{d}u \right)^{\beta} u(t)  k(c,t) dt + \left(\int_d^{b}u \right)^{\beta+1}\int_c^d d\big(k(a,t)\big)\notag\\
& \quad  + \lim_{t\to c+} k(a,t) \bigg[ \left( \int_c^{b}u \right)^{\beta+1} - \left( \int_d^{b}u \right)^{\beta+1} \bigg].
\end{align}
\end{lem}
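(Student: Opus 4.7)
The plan is to peel $k(a,t)$ into its right-limit $k(a,c+) := \lim_{t\to c+} k(a,t)$ and its Lebesgue--Stieltjes increment by writing $k(a,t) = k(a,c+) + \int_{(c,t]} dk(a,\cdot)$ for $t \in (c,d]$. This decomposition is legitimate because $t\mapsto k(a,t)$ is non-decreasing. Substituting it into $J := \int_c^d \left(\int_t^b u\right)^{\beta} u(t) k(a,t) dt$ splits $J$ into a constant-coefficient piece (with factor $k(a,c+)$) and a double Stieltjes integral, and each will be shown to be controlled by one or two of the three terms on the right-hand side of~\eqref{lemma 3.5 ineq}.

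The constant-coefficient piece collapses via $\int_c^d \left(\int_t^b u\right)^{\beta} u(t) dt = \frac{1}{\beta+1}\left[\left(\int_c^b u\right)^{\beta+1} - \left(\int_d^b u\right)^{\beta+1}\right]$, producing exactly the third term on the right-hand side. For the double Stieltjes integral, a Fubini swap converts it into $\frac{1}{\beta+1}\int_{(c,d]} \left[\left(\int_s^b u\right)^{\beta+1} - \left(\int_d^b u\right)^{\beta+1}\right] dk(a,s)$; after discarding the negative summand and applying the quasi-triangle inequality $(A+B)^{\beta+1} \lesssim A^{\beta+1} + B^{\beta+1}$ with $A = \int_s^d u$ and $B = \int_d^b u$, it further splits as $\int_{(c,d]}\left(\int_s^d u\right)^{\beta+1} dk(a,s) + \left(\int_d^b u\right)^{\beta+1}\int_c^d d\!\left(k(a,t)\right)$. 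The latter summand is precisely the second term on the right-hand side. I would handle the former by expanding $(\int_s^d u)^{\beta+1} = (\beta+1)\int_s^d (\int_t^d u)^{\beta} u(t) dt$, applying Fubini once more, and dropping the $-k(a,c+)$ to reduce it to $(\beta+1)\int_c^d (\int_t^d u)^{\beta} u(t) k(a,t) dt$. The kernel regularity then enters in the limit form $k(a,t) \lesssim k(a,c+) + k(c,t)$ for $t>c$, obtained by sending $s\to c+$ in $k(a,t) \lesssim k(a,s) + k(s,t)$ and using that $k$ is non-increasing in its first variable. The $k(c,t)$ piece yields the first term on the right-hand side, while the leftover $k(a,c+)(\int_c^d u)^{\beta+1}$ is absorbed into the third term via the elementary bound $(\int_c^d u)^{\beta+1} \leq (\int_c^b u)^{\beta+1} - (\int_d^b u)^{\beta+1}$, itself a consequence of \eqref{u-estimate} together with $\int_t^d u \leq \int_t^b u$.

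The main obstacle will be the Stieltjes bookkeeping around the left endpoint $c$: since $t\mapsto k(a,t)$ may jump at $c$, replacing $k(a,c+)$ by $k(a,c)$ would lose mass that is exactly what the third term on the right-hand side is designed to capture, and the measure $dk(a,\cdot)$ must consistently be integrated over the half-open interval $(c,d]$ in accordance with the notation $\int_c^d d\!\left(k(a,t)\right)$ in the statement. Beyond this careful endpoint analysis, the proof is essentially a chain of Fubini exchanges coupled with the scalar identity $\int_x^y \left(\int_t^y u\right)^{\beta} u(t) dt = \frac{1}{\beta+1}\left(\int_x^y u\right)^{\beta+1}$.
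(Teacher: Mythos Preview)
Your proposal is correct and follows essentially the same route as the paper: the paper phrases the two main steps as ``integration by parts'' (first moving the derivative onto $k(a,\cdot)$, then back after splitting $\int_t^b u = \int_t^d u + \int_d^b u$), which is exactly your Stieltjes decomposition of $k(a,t)$ followed by two Fubini swaps. The handling of the residual term $k(a,c+)\big(\int_c^d u\big)^{\beta+1}$ via the elementary bound $\big(\int_c^d u\big)^{\beta+1}\le \big(\int_c^b u\big)^{\beta+1}-\big(\int_d^b u\big)^{\beta+1}$ is also identical to the paper's argument.
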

\begin{proof}
Let $\beta\in(0,\infty)$.
Integrating by parts and the equality  $ \int _t^b u =  \int _t^du + \int _d^b u$ yields 
\begin{align*}
\int_{c}^{d} & \left( \int_t^{b}u \right)^{\beta} u(t) k(a,t) dt \\
& \lesssim   
\int_{c}^{d} \left( \int_t^{b}u \right)^{\beta+1} d \big( k(a,t)\big) + \lim_{t\to c+} k(a,t)  \bigg[ \left( \int_c^{b}u \right)^{\beta+1} - \left( \int_d^{b}u \right)^{\beta+1} \bigg]\\
&\approx \int_{c}^{d} \left( \int_t^{d}u \right)^{\beta+1}  d\big(k(a,t)\big) + 
\left( \int_d^{b}u \right)^{\beta+1} \int_c^d d\big(k(a,t)\big)\\
& \quad  + \lim_{t\to c+} k(a,t) \bigg[  \left( \int_c^{b}u \right)^{\beta+1} - \left( \int_d^{b}u \right)^{\beta+1} \bigg].
\end{align*}
Integrating by parts again we obtain
\begin{align*}
\int_{c}^{d} &  \left( \int_t^{b}u \right)^{\beta} u(t) k(a,t) dt \\ 
& \quad \lesssim \int_{c}^{d} \left( \int_t^{d}u \right)^{\beta} u(t)  k(a,t)dt  + \left( \int_d^{b}u \right)^{\beta+1}\int_c^d d\big(k(a,t)\big)\\
& \qquad  + \lim_{t\to c+} k(a,t)  \bigg[  \left( \int_c^{b}u \right)^{\beta+1} - \left( \int_d^{b}u \right)^{\beta+1} \bigg].
\end{align*}
Observe that if $a=c$ it is nothing but the inequality \eqref{lemma 3.5 ineq}. On the other hand if $a < c \leq t$, using the fact that $k(a,t) \lesssim k(a,c) + k(c,t)$, we obtain
\begin{align}\label{second-term}
\int_{c}^{d} &  \left( \int_t^{b}u \right)^{\beta} u(t) k(a,t) dt \notag\\& \quad \lesssim \int_{c}^{d} \left( \int_t^{d}u \right)^{\beta} u(t) k(c,t) dt +  k(a,c) \bigg( \int_c^{d}u \bigg)^{\beta+1} \notag\\
& \quad + \left( \int_d^{b}u \right)^{\beta+1}\int_c^d d\big(k(a,t)\big) +  \lim_{t\to c+} k(a,t) \bigg[  \left( \int_c^{b}u \right)^{\beta+1} - \left( \int_d^{b}u \right)^{\beta+1} \bigg].
\end{align}
On the other hand, in view of \eqref{new-classical}, it is easy to see that
\begin{equation}\label{final-middle}
 \bigg( \int_c^{d}u \bigg)^{\beta+1} \leq \bigg[\int_c^b u - \int_d^b u\bigg]  \bigg( \int_c^{b}u \bigg)^{\beta} \leq    \bigg[\bigg( \int_c^{b}u \bigg)^{\beta+1} - \bigg( \int_d^{b}u \bigg)^{\beta+1} \bigg].  
\end{equation}
Thus, replacing $k(a,c)$ by $\lim_{t\to c+} k(a,t)$ and applying \eqref{final-middle} to the second term in \eqref{second-term}, we obtain
\begin{align*}
\int_{c}^{d}   \left( \int_t^{b}u \right)^{\beta} u(t) k(a,t) dt & \lesssim \int_{c}^{d} \left( \int_t^{d}u \right)^{\beta} u(t) k(c,t) dt  + \left( \int_d^{b}u \right)^{\beta+1}\int_c^d d\big(k(a,t)\big) \\
& \qquad + \lim_{t\to c+} k(a,t) \bigg[  \left( \int_c^{b}u \right)^{\beta+1} - \left( \int_d^{b}u \right)^{\beta+1} \bigg]
\end{align*}
which is the desired estimate. 
\end{proof}

\section{Discrete characterization and some auxiliary results}\label{Disc.c}

In this section, we will go through the basic components of the discretization method and give the discrete characterization of inequality \eqref{main}.

\begin{defi}\cite[Definition 2.5]{Go-Un:22}
Let $w$ be a non-negative measurable function on $(a,b)$ such that $0<\mathcal{W}(x)<\infty$ for all $x\in (a,b$). A strictly increasing sequence  $\{x_k\}_{k=N}^{\infty}\subset [a,b]$ is said to be a discretizing sequence of the function $\mathcal{W}$, if it satisfies $ \mathcal{W}(x_k)  \approx 2^{-k}$, $N \leq k <  \infty $. If $N > -\infty$ then $x_{N} : =a$, otherwise  $x_{-\infty}:= \lim_{k\rightarrow -\infty} x_k = a$.
\end{defi}

Observe that if $N=-\infty$, then $N+1$ is also $-\infty$.

\begin{lem}\cite[Lemma~2.7]{Go-Un:22}\label{L:int-sup-equiv}
Let $\beta >0$ and $N \in \mathbb{Z}\cup \{-\infty\}$. Assume that $w$ is a weight on $(a, b)$ such that $0<\mathcal{W}(x)<\infty$ for all $x\in (a,b)$ and $\{x_k\}_{k=N}^{\infty}$ is a discretizing sequence of the function $\mathcal{W}$. Then for any $n \colon N\leq n$, 
\begin{equation}\label{int.equiv}
\int_{x_n}^b \mathcal{W}(x)^{\beta-1} w(x) h(x)  dx \approx \sum_{k=n+1}^{\infty} 2^{-k\beta} h(x_k)
\end{equation}
and
\begin{equation}\label{sup.equiv}
\esup_{x \in (x_n,b)}  \mathcal{W}(x)^{\beta} h(x) \approx \sup_{n+1\leq k} 2^{-k{\beta}} h(x_k)
\end{equation}
hold for all non-negative and non-decreasing $h$ on $(a, b)$.
\end{lem}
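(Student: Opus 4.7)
The plan is to establish both equivalences by the same device: partition $(x_n, b)$ into the dyadic blocks $[x_k, x_{k+1})$ for $k \geq n$, exploit that $\mathcal{W}(x) \approx 2^{-k}$ uniformly on each such block (a consequence of the monotonicity of $\mathcal{W}$ together with $\mathcal{W}(x_k) \approx 2^{-k}$), and use the monotonicity of $h$ to replace $h(x)$ inside the block by the boundary values $h(x_k)$ and $h(x_{k+1})$.

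For \eqref{int.equiv}, I would start from the decomposition
\[
\int_{x_n}^b \mathcal{W}(x)^{\beta-1} w(x) h(x)\,dx = \sum_{k=n}^{\infty} \int_{x_k}^{x_{k+1}} \mathcal{W}(x)^{\beta-1} w(x) h(x)\,dx,
\]
and on each block antidifferentiate using $w = -\mathcal{W}'$ (a.e.) to get
\[
\int_{x_k}^{x_{k+1}} \mathcal{W}(x)^{\beta-1} w(x)\,dx = \tfrac{1}{\beta}\bigl(\mathcal{W}(x_k)^{\beta} - \mathcal{W}(x_{k+1})^{\beta}\bigr) \approx 2^{-k\beta}.
\]
Pinching with $h(x_k) \leq h(x) \leq h(x_{k+1})$ then yields
\[
2^{-k\beta}\, h(x_k) \lesssim \int_{x_k}^{x_{k+1}} \mathcal{W}^{\beta-1} w\, h\,dx \lesssim 2^{-k\beta}\, h(x_{k+1}).
\]
Summing over $k \geq n$ and performing the obvious index shift $k \mapsto k-1$ on the upper bound delivers both directions of \eqref{int.equiv} after absorbing a harmless factor of $2^{\beta}$.

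For \eqref{sup.equiv}, the same block decomposition works. On $[x_k, x_{k+1})$ the monotonicity of $\mathcal{W}$ and $h$ gives the pointwise bound $\mathcal{W}(x)^{\beta} h(x) \le \mathcal{W}(x_k)^{\beta} h(x_{k+1}) \approx 2^{-k\beta} h(x_{k+1})$; taking $\esup$ over each block and then supremum over $k \geq n$ (with the same index shift as before) produces $\esup_{(x_n,b)} \mathcal{W}^{\beta} h \lesssim \sup_{k \geq n+1} 2^{-k\beta} h(x_k)$. For the reverse direction, I would observe that for a.e.\ $x$ just to the right of $x_k$ one has $\mathcal{W}(x) \geq \mathcal{W}(x_{k+1}) \gtrsim 2^{-k}$ and, by monotonicity, $h(x) \geq h(x_k)$; thus $\mathcal{W}(x)^{\beta} h(x) \gtrsim 2^{-k\beta} h(x_k)$ on a set of positive measure in $[x_k, x_{k+1})$, so $\esup_{(x_n,b)} \mathcal{W}^{\beta} h \gtrsim 2^{-k\beta} h(x_k)$ for every $k \geq n+1$, and taking the supremum over $k$ closes the argument.

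The one technical point that deserves care is the quantitative estimate $\mathcal{W}(x_k)^{\beta} - \mathcal{W}(x_{k+1})^{\beta} \approx 2^{-k\beta}$: the bare equivalence $\mathcal{W}(x_k) \approx 2^{-k}$ controls the two values only separately and, with unfortunate implicit constants, the \emph{difference} could in principle be much smaller than $2^{-k\beta}$. This is handled by the standard observation that one may pass to an equivalent discretizing subsequence ensuring, say, $\mathcal{W}(x_{k+1}) \leq \tfrac{1}{2}\mathcal{W}(x_k)$, which preserves $\mathcal{W}(x_k) \approx 2^{-k}$ while forcing the geometric gap between successive values needed to make the above display valid. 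After this reduction the computations outlined above are entirely routine.
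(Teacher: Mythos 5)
The integral upper bound and both directions of \eqref{sup.equiv} in your argument are fine (and note the paper itself does not prove this lemma but imports it from \cite{Go-Un:22}). The genuine problem is the lower bound in \eqref{int.equiv}, and you have correctly located it but not repaired it. The per-block estimate $\mathcal{W}(x_k)^{\beta}-\mathcal{W}(x_{k+1})^{\beta}\gtrsim 2^{-k\beta}$ is simply false for a general discretizing sequence: the definition only gives $\mathcal{W}(x_k)\approx 2^{-k}$ with some uniform constant $D$, and if $D\ge 2$ the values $\mathcal{W}(x_k)=2^{-k-1}$, $\mathcal{W}(x_{k+1})=(1-\varepsilon_k)2^{-k-1}$ with tiny $\varepsilon_k>0$ are admissible, so consecutive values need not drop by any definite factor. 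Your proposed fix does not close this gap: a re-indexed subsequence forced to satisfy $\mathcal{W}(y_{j+1})\le\tfrac12\mathcal{W}(y_j)$ does \emph{not} ``preserve $\mathcal{W}(x_k)\approx2^{-k}$'' (one only gets $\mathcal{W}(y_j)\approx 2^{-mj}$ for some fixed skip length $m$, i.e.\ a different discretization parameter), and, more importantly, the lemma asserts the equivalence with $\sum_{k}2^{-k\beta}h(x_k)$ over the \emph{given} sequence --- which is how it is used later, e.g.\ in Lemma~\ref{L:equiv. ineq.}, where the specific blocks $[x_{k-1},x_k]$ matter. So even after proving a statement for the modified sequence you would still have to show that the discrete sum (and supremum) over the new points is comparable to the one over the original points; that comparison is exactly where the monotonicity of $h$ and the geometric weights must be exploited, and it is neither stated nor proved in your sketch.

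The standard repair needs no modification of the sequence. Keep your per-block bound $\int_{x_{k-1}}^{x_k}\mathcal{W}^{\beta-1}wh \ge \tfrac1\beta h(x_{k-1})\bigl(\mathcal{W}(x_{k-1})^{\beta}-\mathcal{W}(x_k)^{\beta}\bigr)$, sum over $k\ge n+1$, and perform summation by parts (Tonelli for non-negative terms), writing $h(x_{k-1})=h(x_n)+\sum_{j=n+1}^{k-1}\bigl(h(x_j)-h(x_{j-1})\bigr)$: the telescoping inner sums give $\sum_{k\ge j+1}\bigl(\mathcal{W}(x_{k-1})^{\beta}-\mathcal{W}(x_k)^{\beta}\bigr)=\mathcal{W}(x_j)^{\beta}\gtrsim 2^{-j\beta}$, so the integral dominates $h(x_n)2^{-n\beta}+\sum_{j\ge n+1}\bigl(h(x_j)-h(x_{j-1})\bigr)2^{-j\beta}$ up to constants. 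The same Abel rearrangement of $\sum_{k\ge n+1}2^{-k\beta}h(x_k)$, using $\sum_{k\ge j}2^{-k\beta}\approx 2^{-j\beta}$, shows that this sum is comparable to the same expression; comparing the two yields the missing inequality with constants depending only on $\beta$ and the discretizing constant (this is the argument in the spirit of \eqref{dec.sum-sum}, where only the strong decay of $2^{-k\beta}$, not any gap between consecutive $\mathcal{W}(x_k)$, is used). With this replacement the rest of your write-up stands.
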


Next, we need some new tools to help us reconcile the distinction between the discrete and continuous worlds.

\begin{lem}
Let $\beta > 0$ and $N,M \in \mathbb{Z}\cup \{-\infty, +\infty\}$. Assume that $w$ is a weight on $(a,b)$ such that $0<\mathcal{W}(x)<\infty$ for all $x\in (a,b)$ and $\{x_k\}_{k=N}^{\infty}$ is a discretizing sequence of $\mathcal{W}$. Then for any $m\colon m\leq M$,
\begin{equation}\label{P1}
\sup_{x\in (a,x_m)} \mathcal{W}(x)^{-\beta} h(x) \approx \sup_{N+1 \leq k \leq m} 2^{k \beta} h(x_{k-1})
\end{equation}
holds for all non-negative and non-increasing function $h$ on $(a,b)$.
\end{lem}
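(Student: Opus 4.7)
The plan is to exploit that $\mathcal{W}^{-\beta}$ is non-decreasing (since $\mathcal{W}$ is non-increasing) while $h$ is non-increasing, and to compare the continuous supremum on $(a, x_m)$ with its discrete counterpart along the dyadic levels $x_k$. This is the natural companion to \eqref{sup.equiv} from Lemma~\ref{L:int-sup-equiv}, but with the roles of the two endpoints of the interval interchanged. The key idea is to partition the interval along the discretizing sequence, writing $(a, x_m) = \bigcup_{k} (x_{k-1}, x_k]$ where $k$ ranges over $\{N+1, N+2, \dots, m\}$; when $N > -\infty$ we have $x_N = a$, while when $N = -\infty$ the blocks accumulate at $a$ from the right and the index $N+1$ is interpreted as $-\infty$.

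For the upper estimate, fix $x \in (x_{k-1}, x_k]$. Monotonicity yields $\mathcal{W}(x)^{-\beta} \leq \mathcal{W}(x_k)^{-\beta} \approx 2^{k\beta}$ and $h(x) \leq h(x_{k-1}+) \leq h(x_{k-1})$, where $h(x_N)$ is interpreted as $\lim_{y \to a^+} h(y)$ in the finite-$N$ case (this is the natural value of a non-increasing function at its left endpoint). Hence
\[
\mathcal{W}(x)^{-\beta} h(x) \lesssim 2^{k\beta} h(x_{k-1}),
\]
and taking the supremum first over $x$ in the block and then over $k$ produces the upper bound $\sup_{x\in(a,x_m)} \mathcal{W}(x)^{-\beta} h(x) \lesssim \sup_{N+1\leq k \leq m} 2^{k\beta} h(x_{k-1})$.

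For the reverse inequality, observe that for every $k$ with $N+2 \leq k \leq m$ (or simply $k \leq m$ when $N=-\infty$), the point $x_{k-1}$ lies in $(a, x_m)$, so
\[
\sup_{x\in(a,x_m)} \mathcal{W}(x)^{-\beta} h(x) \geq \mathcal{W}(x_{k-1})^{-\beta} h(x_{k-1}) \approx 2^{(k-1)\beta} h(x_{k-1}) \approx 2^{k\beta} h(x_{k-1}).
\]
Taking the supremum over such $k$ recovers all terms of the discrete supremum except possibly the single index $k=N+1$ when $N$ is finite. This lone boundary term is the only delicate point and I would absorb it using
\[
\sup_{x\in(a,x_m)} \mathcal{W}(x)^{-\beta} h(x) \geq \mathcal{W}(a)^{-\beta} \lim_{y\to a^+} h(y) \approx 2^{N\beta} h(x_N),
\]
which controls $2^{(N+1)\beta} h(x_N)$ up to a factor depending only on $\beta$. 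Beyond this endpoint bookkeeping there is no substantive obstacle; the argument is entirely dyadic, with the opposite monotonicities of $\mathcal{W}^{-\beta}$ and $h$ doing all of the work.
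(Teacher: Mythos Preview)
Your proposal is correct and follows essentially the same route as the paper's proof: partition $(a,x_m)$ along the discretizing sequence, use that $\mathcal{W}(x)\approx 2^{-k}$ on each block $(x_{k-1},x_k)$, and exploit the monotonicity of $h$ to replace $\sup_{x\in(x_{k-1},x_k)}h(x)$ by $h(x_{k-1})$. The paper compresses both inequalities into a single line via the two-sided estimate $\mathcal{W}(x)^{-\beta}\approx 2^{k\beta}$ on each block, whereas you split into separate upper and lower bounds and take extra care with the boundary term $k=N+1$ when $N$ is finite; the underlying argument is the same.
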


\begin{proof}
By the definition of a discretizing sequence of $\mathcal{W}$ and the monotonicity of $h$, we immediately have
\begin{align*}
\LHS\eqref{P1} &= \sup_{N+1\leq k \leq m} \sup_{x\in(x_{k-1}, x_k)} \mathcal{W}(x)^{-\beta} h(x) \approx \sup_{N+1\leq k \leq m} 2^{k\beta} \sup_{x\in(x_{k-1}, x_k)} h(x) = \RHS\eqref{P1}.     
\end{align*}
\end{proof}

Next, we show that \eqref{main} is equivalent to two more manageable discrete inequalities.

\begin{lem}\label{L:equiv. ineq.}
Assume that $0 < r \leq 1$, $0 < p, q < \infty$ and $u,v,w$ are weights on $(a,b)$ such that $0 < \mathcal{W}(x)<\infty$ for all $x\in (a,b)$. Suppose that $\{x_k\}_{k=N}^{\infty}$ is a discretizing sequence of $\mathcal{W}$. Then, there exists a positive constant $C$ such that \eqref{main} holds for all $f\in \mathcal{M}^+(a,b)$ if and only if there exist positive constants $C'$ and $C''$ such that
\begin{equation}\label{Vr-inequality}
\bigg( \sum_{k=N+1}^{\infty} \bigg( \sum_{i=N+1}^{k} 2^{i\frac{r}{p}} V_r(x_{i-1},x_i)^r a_i^r \bigg)^{\frac{q}{r}} \int_{x_k}^{x_{k+1}} u \bigg)^{\frac{1}{q}} \leq C' \bigg(\sum_{k=N+1}^{\infty} a_k^p \bigg)^{\frac{1}{p}}
\end{equation}	
and
\begin{equation}\label{H-inequality}
\bigg( \sum_{k=N+1}^{\infty} 2^{k\frac{q}{p}} H(x_{k-1}, x_k)^q a_k^q \bigg)^{\frac{1}{q}} \leq C'' \bigg(\sum_{k=N+1}^{\infty} a_k^p \bigg)^{\frac{1}{p}}
\end{equation}	
hold for every sequence of non-negative numbers $\{a_k\}_{k=N+1}^{\infty}$, where
\begin{equation}\label{H(a,b)}
H(a,b) := \sup_{f\in  \mathcal{M}^+(a,b)} \frac{\bigg(\int_a^b \bigg(\int_a^t f(s)^r v(s) ds \bigg)^{\frac{q}{r}} u(t) dt \bigg)^{\frac{1}{q}}}{\int_a^b f(t) dt}
\end{equation}
and $V_r$ is defined in \eqref{Vr}.
Moreover the least constants $C$, $C'$ and $C''$ respectively in \eqref{main}, \eqref{Vr-inequality} and \eqref{H-inequality} satisfy $C \approx C' + C''$.
\end{lem}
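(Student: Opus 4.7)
The plan is to pass to a discretized form of \eqref{main} by using the discretizing sequence $\{x_k\}_{k=N}^{\infty}$ of $\mathcal{W}$ to partition $(a,b)=\bigcup_{k\ge N}(x_k,x_{k+1})$, and to show that the resulting sum inequality splits naturally into a ``between-interval'' part captured by \eqref{Vr-inequality} and a ``within-interval'' part captured by \eqref{H-inequality}. The correspondence between $f\in\mathcal{M}^+$ and a non-negative sequence $\{a_k\}_{k=N+1}^{\infty}$ will be $a_k:=2^{-k/p}b_k$, where $b_k:=\int_{x_{k-1}}^{x_k}f$. This scaling is forced by the discretization of $\RHS\eqref{main}$: since $t\mapsto\big(\int_a^t f\big)^p$ is non-decreasing, Lemma~\ref{L:int-sup-equiv} with $\beta=1$ combined with \eqref{dec.sum-sum} applied to the strongly decreasing sequence $\{2^{-k}\}$ gives
\[
\int_a^b w(t)\Big(\int_a^t f\Big)^p dt \approx \sum_{k=N+1}^{\infty} 2^{-k}\Big(\sum_{j=N+1}^{k}b_j\Big)^p \approx \sum_{k=N+1}^{\infty} 2^{-k}b_k^p = \sum_{k=N+1}^{\infty} a_k^p.
\]

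Next I would discretize $\LHS\eqref{main}$. Set $c_k:=\int_{x_{k-1}}^{x_k} f^r v$. H\"older's inequality (with exponents $1/r$ and $1/(1-r)$ when $0<r<1$, and directly when $r=1$) yields $c_k\le V_r(x_{k-1},x_k)^r b_k^r$, and this bound is attained up to a multiplicative constant by $f|_{(x_{k-1},x_k)}$ proportional to $v^{1/(1-r)}$ (and by a suitable approximate extremizer when $r=1$). Splitting $\int_a^t f^r v=\sum_{j=N+1}^{k}c_j+\int_{x_k}^t f^r v$ for $t\in(x_k,x_{k+1})$ and using $(A+B)^{q/r}\approx A^{q/r}+B^{q/r}$ (with constants depending only on $q/r$) gives
\[
\int_a^b\Big(\int_a^t f^r v\Big)^{q/r} u(t)\,dt \approx \mathrm{I}(f)+\mathrm{II}(f),
\]
where $\mathrm{I}(f):=\sum_{k\ge N+1}\big(\sum_{j=N+1}^{k}c_j\big)^{q/r}\int_{x_k}^{x_{k+1}}u$ and $\mathrm{II}(f):=\sum_{k\ge N}\int_{x_k}^{x_{k+1}}\big(\int_{x_k}^t f^r v\big)^{q/r} u(t)\,dt$. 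Using $c_j\le V_r(x_{j-1},x_j)^r 2^{jr/p}a_j^r$, the quantity $\mathrm{I}(f)^{1/q}$ is dominated by $\LHS\eqref{Vr-inequality}$; and by the definition \eqref{H(a,b)} of $H$ applied on each $(x_k,x_{k+1})$, $\mathrm{II}(f)^{1/q}\le\big(\sum_{k\ge N+1} H(x_{k-1},x_k)^q b_k^q\big)^{1/q}=\LHS\eqref{H-inequality}$ after writing $b_k=2^{k/p}a_k$.

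The reverse implication (\eqref{Vr-inequality} and \eqref{H-inequality} together imply \eqref{main}) now follows immediately by applying the two discrete inequalities to the bounds on $\mathrm{I}(f)$ and $\mathrm{II}(f)$, yielding $C\lesssim C'+C''$. For the forward direction, I would test \eqref{main} with two carefully chosen families of functions. To obtain $C'\lesssim C$, given a sequence $\{a_k\}$ take $f$ to be the H\"older extremizer on each $(x_{k-1},x_k)$, scaled so that $\int_{x_{k-1}}^{x_k}f=2^{k/p}a_k$; then $c_k\approx V_r(x_{k-1},x_k)^r(2^{k/p}a_k)^r$, so that $\mathrm{I}(f)^{1/q}\gtrsim\LHS\eqref{Vr-inequality}$, while $\RHS\eqref{main}\approx(\sum a_k^p)^{1/p}$, and \eqref{main} concludes. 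To obtain $C''\lesssim C$, I would take $f=\sum_k f_k$ with each $f_k$ supported on $(x_{k-1},x_k)$, a near-optimizer of $H(x_{k-1},x_k)$, and normalized by $\int f_k=2^{k/p}a_k$; disjointness of supports then gives $\mathrm{II}(f)^{1/q}\gtrsim\LHS\eqref{H-inequality}$, and the same computation of $\RHS\eqref{main}$ finishes. The main obstacle is the careful book-keeping of index shifts---aligning the sums in \eqref{Vr-inequality} and \eqref{H-inequality} (starting at $N+1$) with the partition indexed by $k\ge N$, and shifting between $H(x_k,x_{k+1})$ and $H(x_{k-1},x_k)$---together with verifying the $(A+B)^{q/r}\approx A^{q/r}+B^{q/r}$ reduction uniformly for all $q/r>0$.
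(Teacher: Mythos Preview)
Your proposal is correct and follows essentially the same route as the paper's proof: discretize $\RHS\eqref{main}$ via \eqref{int.equiv} and \eqref{dec.sum-sum}, split $\LHS\eqref{main}$ into the between-interval and within-interval sums $\mathrm{I}(f)$ and $\mathrm{II}(f)$, and then match these to \eqref{Vr-inequality} and \eqref{H-inequality} by H\"older (via \eqref{Vr-holder}) and the definition of $H$, with test functions built from per-interval extremizers for the converse. The index-shift issue you flag is exactly the one the paper handles by writing \eqref{disc.main.2} over $k\ge N+1$ on the intervals $(x_{k-1},x_k)$ rather than $(x_k,x_{k+1})$.
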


\begin{proof}
We will start with discretizing inequality \eqref{main}. Let $\{x_k\}_{k=N}^{\infty}$ be a discretizing sequence of $\mathcal{W}$. Using \eqref{int.equiv} for $\beta = 1$ and \eqref{dec.sum-sum} for $\alpha =p$, we get
\begin{equation*}
\RHS{\eqref{main}}^p \approx C^p \sum_{k=N+1}^{\infty} 2^{-k} \bigg(\sum_{i=N+1}^{k}\int_{x_{i-1}}^{x_{i}} f\bigg)^p \approx C^p \sum_{k=N+1}^{\infty} 2^{-k} \bigg(\int_{x_{k-1}}^{x_k} f\bigg)^p.
\end{equation*}
On the other hand, we have
\begin{align*}
\LHS\eqref{main}^q &= \sum_{k=N}^{\infty} \int_{x_k}^{x_{k+1}} \bigg( \int_a^t f^r v \bigg)^{\frac{q}{r}} u(t) dt  \\
& \approx \sum_{k=N+1}^{\infty} \bigg( \int_a^{x_k} f^r v\bigg)^{\frac{q}{r}} \int_{x_k}^{x_{k+1}} u  + \sum_{k=N}^{\infty}  \int_{x_k}^{x_{k+1}} \bigg( \int_{x_k}^t f^r v \bigg)^{\frac{q}{r}} u(t) dt.
\end{align*}
Therefore,  there exists a positive constant $C$ such that \eqref{main} holds for all non-negative measurable $f$ on $(a,b)$ if and only if there exist positive constants $ \mathfrak{C'}, \mathfrak{C''}$ such that 
\begin{equation}\label{disc.main.1}
\bigg( \sum_{k=N+1}^{\infty}  \bigg( \int_a^{x_k} f^r v \bigg)^{\frac{q}{r}} \int_{x_k}^{x_{k+1}} u  \bigg)^{\frac{1}{q}} \leq \mathfrak{C'} \bigg( \sum_{k=N+1}^{\infty}  2^{-k} \bigg( 	\int_{x_{k-1}}^{x_{k}} f \bigg)^p \bigg)^{\frac{1}{p}}
\end{equation}
and
\begin{equation}\label{disc.main.2}
\bigg( \sum_{k=N+1}^{\infty} \int_{x_{k-1}}^{x_{k}} \bigg( \int_{x_{k-1}}^t f^r v \bigg)^{\frac{q}{r}} u(t) dt \bigg)^{\frac{1}{q}} \leq \mathfrak{C''} \bigg( \sum_{k=N+1}^{\infty}  2^{-k} \bigg( \int_{x_{k-1}}^{x_{k}} f \bigg)^p \bigg)^{\frac{1}{p}}
\end{equation}
hold for all non-negative measurable $f$ on $(a,b)$. Moreover, $C\approx \mathfrak{C'} + \mathfrak{C''}$.

Next, we will prove that \eqref{disc.main.1} holds for all non-negative measurable $f$ on $(a,b)$ if and only if \eqref{Vr-inequality} holds for every sequence of non-negative numbers $\{a_k\}_{k=N+1}^{\infty}$.Observe that for $0<r\leq 1$ H\"older's inequality and \eqref{Vr} yields, 
\begin{equation}\label{Vr-holder}
\sup_{f\in  \mathcal{M}^+(a, b)} \frac{\bigg(\int_{a}^{b} f^r v \bigg)^{\frac{1}{r}}} {\int_{a}^{b} f}= V_r(a,b),
\end{equation}
see, \cite[(4.12)]{GPU-JFA}.
Then, there exist $f_k\in\mathcal{M}^+(a,b)$, $N+1 \leq k$ such that 	
\begin{equation*}
\supp f_k \subset [x_{k-1}, x_k], \quad \int_{x_{k-1}}^{x_{k}} f_k = 1 \quad \text{and} \quad \int_{x_{k-1}}^{x_k} f_k^r v \gtrsim  V_r(x_{k-1}, x_k)^r.
\end{equation*}
Now, define $f=\sum_{m=N+1}^{\infty} 2^{\frac{m}{p}} a_m f_m $ for any sequence  of non-negative numbers $\{a_k\}_{k=N+1}^{\infty}$. Assume that \eqref{disc.main.1} holds.  Testing \eqref{disc.main.1} with $f$, we get
\begin{align*}
\LHS\eqref{disc.main.1}  &=\bigg( \sum_{k =N+1}^{\infty} \bigg( \sum_{i=N+1}^k \int_{x_{i-1}}^{x_i}  \bigg[\sum_{m=N+1}^{\infty} 2^{\frac{m}{p}} a_m f_m(t) \bigg]^r  v(t) dt \bigg)^{\frac{q}{r}} \int_{x_k}^{x_{k+1}} u \bigg)^{\frac{1}{q}} \\&=\bigg( \sum_{k =N+1}^{\infty} \bigg( \sum_{i=N+1}^k  2^{\frac{ir}{p}}a_i^r \int_{x_{i-1}}^{x_i} f_i^r v \bigg)^{\frac{q}{r}} \int_{x_k}^{x_{k+1}} u \bigg)^{\frac{1}{q}} \\
& \gtrsim \bigg( \sum_{k =N+1}^{\infty} \bigg( \sum_{i=N+1}^k  2^{\frac{ir}{p}} a_i^r V_r(x_{i-1},x_i)^r \bigg)^{\frac{q}{r}} \int_{x_k}^{x_{k+1}} u \bigg)^{\frac{1}{q}} .
\end{align*}
On the other hand,
\begin{align*}
\RHS\eqref{disc.main.1} &= \mathfrak{C'} \bigg( \sum_{k=N+1}^{\infty}  2^{-k} \bigg( 	\int_{x_{k-1}}^{x_{k}} \sum_{m=N+1}^{\infty} 2^{\frac{m}{p}} a_m f_m(t) dt \bigg)^p \bigg)^{\frac{1}{p}} \\ 
&= \mathfrak{C'} \bigg( \sum_{k =N+1}^{\infty} a_k^p \bigg( 	\int_{x_{k-1}}^{x_k} f_k \bigg)^p \bigg)^{\frac{1}{p}} = \mathfrak{C'} \bigg( \sum_{k =N+1}^{\infty}  a_k^p  \bigg)^{\frac{1}{p}}.
\end{align*}
Therefore, \eqref{Vr-inequality} holds for all sequences of non-negative numbers $\{a_k\}_{k=N+1}^{\infty}$ with $C' \lesssim \mathfrak{C}'$.

Conversely, assume that \eqref{Vr-inequality} holds. By \eqref{Vr-holder} we have
\begin{equation*}
V_r(x_{k-1}, x_k)^r \geq \bigg( \int_{x_{k-1}}^{x_k} f^r v \bigg) \bigg(\int_{x_{k-1}}^{x_{k}} f \bigg)^{-r}, \quad N+1 \leq k,
\end{equation*}
for all $f\in \mathcal{M}^+$. Then validity of \eqref{Vr-inequality} states that
\begin{equation*}
\bigg( \sum_{k =N+1}^{\infty} \bigg( \sum_{i=N+1}^{k} 2^{\frac{ir}{p}} a_i^r \bigg( 	\int_{x_{i-1}}^{x_i} f^r v \bigg) \bigg(\int_{x_{i-1}}^{x_i} f \bigg)^{-r} 	\bigg)^{\frac{q}{r}} \int_{x_k}^{x_{k+1}} u \bigg)^{\frac{1}{q}} \leq C' \bigg(\sum_{k =N+1}^{\infty} a_k^p \bigg)^{\frac{1}{p}}
\end{equation*}
holds for all sequences  of non-negative numbers $\{a_k\}_{k=N+1}^{\infty}$. On taking $a_k = 2^{-\frac{k}{p}} \int_{x_{k-1}}^{x_k} f$, we get that \eqref{disc.main.1} holds with $\mathfrak{C'} \leq C'$. Hence, $C' \approx \mathfrak{C'}$.

Now suppose that \eqref{disc.main.2} holds. By \eqref{H(a,b)}, we can choose $h_k\in \mathcal{M}^+(a,b)$, $N+1 \leq k$  such that
\begin{equation*}
\supp h_k \subset [x_{k-1},x_{k}], \quad \int_{x_{k-1}}^{x_{k}} h_k =1 \quad \text{and} \quad \int_{x_{k-1}}^{x_{k}} \bigg( \int_{x_{k-1}}^t h_k^r v\bigg)^\frac{q}{r} u(t) dt \gtrsim H(x_{k-1}, x_{k})^q.
\end{equation*}

Similar to the previous part of the proof testing \eqref{disc.main.2} with $f= \sum_{m=N+1}^{\infty} 2^{\frac{m}{p}}a_m h_m$, where $\{a_m\}_{m=N+1}^{\infty}$ is a sequence of non-negative numbers yields to the inequality \eqref{H-inequality} with $C'' \lesssim \mathfrak{C''}$. 

Conversely, assume that \eqref{H-inequality} holds. Since
\begin{equation*}
H(x_{k-1}, x_{k})^q \geq  \bigg( \int_{x_{k-1}}^{x_{k}} \bigg( \int_{x_{k-1}}^t f^r v\bigg)^\frac{q}{r} u(t) dt \bigg)\bigg(\int_{x_{k-1}}^{x_{k}} f\bigg)^{-q}, \quad  N+1 \leq k,
\end{equation*}
for every $f\in \mathcal{M}^+$, the validity of \eqref{H-inequality} yields that 
\begin{equation*}
\bigg( \sum_{k=N+1}^{\infty} 2^{k\frac{q}{p}} \bigg( \int_{x_{k-1}}^{x_{k}} \bigg( \int_{x_{k-1}}^t f^r v\bigg)^\frac{q}{r} u(t) dt \bigg)\bigg(\int_{x_{k-1}}^{x_{k}} f\bigg)^{-q} a_k^q \bigg)^{\frac{1}{q}} \leq C'' \bigg(\sum_{k=N+1}^{\infty} a_k^p \bigg)^{\frac{1}{p}}
\end{equation*}	
holds for all sequences  of non-negative numbers $\{a_k\}_{k=N+1}^{\infty}$.
Now putting $a_k = 2^{-\frac{k}{p}} \int_{x_{k-1}}^{x_{k}} f$ 
in the latter inequality, we obtain \eqref{disc.main.2}. Moreover, $\mathfrak{C''} \leq C''$. 

Consequently,  there exists a positive constant $C$ such that \eqref{main} holds for all non-negative measurable $f$ on $(a,b)$ if and only if there exist positive constants $ C', C''$ such that \eqref{Vr-inequality} and \eqref{H-inequality} hold for every sequence of non-negative numbers $\{a_k\}_{k=N+1}^{\infty}$.
\end{proof}

We can now state and prove the discrete characterization of inequality \eqref{main}.

\begin{thm}\label{C:discrete solutions}
Let $0 < r \leq 1$, $0 < p, q < \infty$ and $u,v,w$ be weights on $(a,b)$ such that $0 < \mathcal{W}(x)<\infty$ for all $x\in (a,b)$. Assume that $\{x_k\}_{k=N}^{\infty}$ is a discretizing sequence of $\mathcal{W}$. Then \eqref{main} holds for all $f\in \mathcal{M}^+(a,b)$ if and only if 

{\rm(i)} either $p \leq r \leq 1 \leq q$, and $\max\{A_1, B_1\} < \infty$, where
\begin{equation}\label{A1}
A_1 := \sup_{N+1 \leq k} 2^{\frac{k}{p}} V_r(x_{k-1}, x_k) \bigg(\int_{x_k}^b u\bigg)^{\frac{1}{q}}
\end{equation}
and
\begin{equation}\label{B1}
B_1 =  \sup_{N+1 \leq k} 2^{\frac{k}{p}} \esup_{t \in (x_{k-1}, x_{k}) } \bigg( \int_t^{x_{k}} u \bigg)^{\frac{1}{q}}  V_r(x_{k-1}, t)
\end{equation}

{\rm(ii)}  or $p\leq q <1$, $p \leq r\leq 1$ and $\max\{A_1, B_2\} < \infty$, where $A_1$ is defined in \eqref{A1} and
\begin{equation}\label{B2}
B_2 :=  \sup_{N+1 \leq k} 2^{\frac{k}{p}} \bigg( \int_{x_{k-1}}^{x_{k}} \bigg( \int_t^{x_{k}} u \bigg)^{\frac{q}{1-q}} u(t) V_r(x_{k-1}, t)^{\frac{q}{1-q}}  dt \bigg)^{\frac{1-q}{q}}
\end{equation}

{\rm(iii)} or $r < p \leq q$, $r \leq 1 \leq q$ and $\max\{A_2, B_1\} < \infty$, where $B_1$ is defined in \eqref{B1} and
\begin{equation}\label{A2}
A_2 :=  \sup_{N+1 \leq k} \bigg(\int_{x_k}^b u\bigg)^{\frac{1}{q}} \bigg(\sum_{i=N+1}^k 2^{i\frac{r}{p-r}} V_r(x_{i-1}, x_i)^{\frac{pr}{p-r}} \bigg)^{\frac{p-r}{pr}}
\end{equation}

{\rm(iv)} or $r < p \leq q < 1$ and $\max\{A_2, B_2\}< \infty$, where $A_2$ and $B_2$ are defined in \eqref{A2} and \eqref{B2}, respectively

{\rm(v)} or  $q < p \leq r \leq 1$ and  $\max\{A_3, B_3\} < \infty$, where
\begin{equation}\label{A3}
A_3 := \bigg(\sum_{k=N+1}^{\infty} \bigg(\int_{x_k}^{x_{k+1}} u\bigg) \bigg(\int_{x_k}^b u\bigg)^{\frac{q}{p-q}} \sup_{N+1 \leq i\leq k}  2^{i\frac{q}{p-q}} V_r(x_{i-1},x_i)^{\frac{pq}{p-q}} \bigg)^{\frac{p-q}{pq}}
\end{equation}
and
\begin{equation}\label{B3}
B_3 :=   \bigg(\sum_{k=N+1}^{\infty} 2^{k\frac{q}{p-q}} \bigg( \int_{x_{k-1}}^{x_{k}} \bigg( \int_t^{x_{k}} u \bigg)^{\frac{q}{1-q}} u(t) V_r(x_{k-1},t)^{\frac{q}{1-q}} dt \bigg)^{\frac{p(1-q)}{p-q}} \bigg)^{\frac{p-q}{pq}}
\end{equation}

{\rm(vi)} or  $q < p$, $q < 1$, $r < p$, $r \leq 1$, and $\max\{A_4, B_3\} < \infty$, where $B_3$ is defined in \eqref{B3} and
\begin{equation}\label{A4}
A_4 :=  \bigg(\sum_{k=N+1}^{\infty} \bigg(\int_{x_k}^{x_{k+1}} u \bigg) \bigg(\int_{x_k}^b u \bigg)^{\frac{q}{p-q}} \bigg( \sum_{i=N+1}^{k} 2^{i\frac{r}{p-r}} V_r(x_{i-1}, x_i)^{\frac{pr}{p-r}} \bigg)^{\frac{q(p-r)}{r(p-q)}}  \bigg)^{\frac{p-q}{pq}}
\end{equation}

{\rm(vii)} or  $r \leq 1 \leq q < p$, and $\max\{A_4, B_4\} < \infty$, where $A_4$ is defined in \eqref{A4} and 
\begin{equation}\label{B4}
B_4 :=  \bigg( \sum_{k=N+1}^{\infty} 2^{k\frac{q}{p-q}} \esup_{t \in (x_{k-1}, x_{k})} \bigg(\int_t^{x_{k}} u \bigg)^{\frac{p}{p-q}} V_r(x_{k-1},t)^{\frac{pq}{p-q}} \bigg)^{\frac{p-q}{pq}}.
\end{equation}

Moreover, the best constant $C$ in \eqref{main} satisfies 
\begin{equation*}
C\approx
\begin{cases}
	A_1+B_1 &\text{in the case \textup{(i)},} \\
	A_1+B_2 &\text{in the case \textup{(ii)},} \\
	A_2+B_1 &\text{in the case \textup{(iii)},}\\
	A_2+B_2 &\text{in the case \textup{(iv)},}\\
	A_3+B_3 &\text{in the case \textup{(v)},}\\
	A_4+B_3 &\text{in the case \textup{(vi)},}\\
	A_4+B_4 &\text{in the case \textup{(vii)},}
\end{cases}
\end{equation*}
and the multiplicative constants depend only on $p,q,r$.
\end{thm}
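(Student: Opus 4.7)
The plan is to invoke Lemma~\ref{L:equiv. ineq.}, which converts \eqref{main} into the two discrete inequalities \eqref{Vr-inequality} and \eqref{H-inequality} and guarantees $C\approx C'+C''$, where $C',C''$ are the corresponding best constants. The task therefore reduces to characterizing \eqref{Vr-inequality} and \eqref{H-inequality} separately in every parameter regime, and then assembling the conditions case by case.

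For \eqref{H-inequality}, I observe that it is the weighted $\ell^p\to\ell^q$ diagonal embedding with weight $w_k:=2^{kq/p}H(x_{k-1},x_k)^q$, whose classical characterization gives
\[
C''\approx\sup_{k}2^{k/p}H(x_{k-1},x_k)\quad(p\le q),\qquad C''\approx\bigg(\sum_k 2^{kq/(p-q)}H(x_{k-1},x_k)^{pq/(p-q)}\bigg)^{(p-q)/(pq)}\quad(p>q).
\]
It then remains to evaluate the Hardy-operator norm $H(a,b)$ from \eqref{H(a,b)}. Via the substitution $G:=f^{r}v$ (so that $\int f=\int G^{1/r}v^{-1/r}$ and the inner kernel becomes $\int_a^t G$), this is recast as a classical weighted Hardy inequality with source exponent $P=1/r$ and target exponent $Q=q/r$; the Muckenhoupt criterion (when $P\le Q$, i.e.\ $q\ge 1$) and the integral criterion for $Q<P$ (i.e.\ $q<1$) then yield respectively
\[
H(a,b)\approx\esup_{t\in(a,b)}\bigg(\int_t^b u\bigg)^{1/q}V_r(a,t),\qquad H(a,b)\approx\bigg(\int_a^b\bigg(\int_t^b u\bigg)^{q/(1-q)}u(t)V_r(a,t)^{q/(1-q)}\,dt\bigg)^{(1-q)/q}.
\]
Inserting these into the diagonal-embedding condition produces $B_1,B_2,B_3,B_4$ in the appropriate sub-cases.

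For \eqref{Vr-inequality} I put $c_i:=a_i^r$ and $s_i:=2^{ir/p}V_r(x_{i-1},x_i)^r$ and raise both sides to the $r$-th power, recasting it as the standard weighted discrete Hardy inequality
\[
\bigg(\sum_k u_k\Big(\sum_{i\le k}s_i c_i\Big)^{\tilde Q}\bigg)^{1/\tilde Q}\le (C')^r\bigg(\sum_k c_k^{\tilde P}\bigg)^{1/\tilde P},
\]
with $\tilde P=p/r$, $\tilde Q=q/r$ and $u_k=\int_{x_k}^{x_{k+1}}u$. The characterization now splits according to the relative positions of $\tilde P$ with $1$ (equivalently $p$ with $r$) and of $\tilde P$ with $\tilde Q$ (equivalently $p$ with $q$). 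In the cases $\tilde P\le\tilde Q$ (i.e.\ $p\le q$, cases (i)--(iv)) I swap the order of summation by Minkowski's inequality (when $\tilde Q\ge 1$) or by the sub-additivity of $x\mapsto x^{\tilde Q}$ (when $\tilde Q<1$), then conclude by single-vector testing to obtain $A_1$ in the regime $p\le r$, and by the standard Muckenhoupt chain argument with dual exponent $\tilde P'=p/(p-r)$ to obtain $A_2$ in the regime $r<p$. In the cases $\tilde Q<\tilde P$ (i.e.\ $p>q$, cases (v)--(vii)) I apply H\"older's inequality with exponents $\tilde P/\tilde Q$ and $\tilde P/(\tilde P-\tilde Q)$, yielding the integral-type condition $A_4$ in the standard regime $\tilde P>1$; when $\tilde P\le 1$ the resulting sum $\sum_n m_n\tilde U_n^{p/(p-q)}$ (with $m_n$ the natural weight and $\tilde U_n=\int_{x_n}^b u$) is converted into the $\sup$-inside-sum form $A_3$ by applying Lemma~\ref{lem_3.7} to the partial-sum kernel generated by the $s_i$, together with \eqref{u-estimate} to replace $u_k\tilde U_k^{q/(p-q)}$ by the telescoping differences of $\tilde U_k^{p/(p-q)}$.

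Assembling the seven parameter regions then yields $C\approx C'+C''\approx A_i+B_j$ as stated, with multiplicative constants depending only on $p,q,r$. The principal technical hurdle is the sub-case $\tilde P\le 1$ of \eqref{Vr-inequality}: there $(\sum_k c_k^{\tilde P})^{1/\tilde P}$ is not a norm, the usual Muckenhoupt--Sawyer duality breaks down, and one must rely on Lemma~\ref{lem_3.7} together with the subadditivity identity $V_r(x_{j-1},x_k)^{r/(1-r)}=\sum_{i=j}^{k}V_r(x_{i-1},x_i)^{r/(1-r)}$ across consecutive blocks in order to identify the correct form of the characterizing functional.
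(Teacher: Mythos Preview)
Your proposal is correct and follows essentially the same architecture as the paper: both invoke Lemma~\ref{L:equiv. ineq.} to reduce to \eqref{Vr-inequality} and \eqref{H-inequality}, then characterize each separately. The only difference is cosmetic: the paper quotes the discrete Hardy and Landau characterizations as black boxes from \cite[Theorems~4.5--4.7]{GPU-JFA} (which in turn rest on \cite{Be:91,Gr:98}), whereas you sketch how those results would be proved (the substitution $G=f^{r}v$ for $H(a,b)$, single-vector testing and Minkowski/H\"older for the discrete Hardy side). Your route is thus slightly more self-contained but otherwise identical in substance; note only that your appeal to Lemma~\ref{lem_3.7} for the $\tilde P\le 1$, $\tilde Q<\tilde P$ case is not quite the right tool---that lemma handles regular kernels with a \emph{strongly increasing} outer weight, whereas $A_3$ arises from the Grosse-Erdmann characterization in \cite{Gr:98} via a different blocking argument---but since the result is already available in the literature this does not affect the validity of the overall plan.
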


\begin{proof}
By Lemma~\ref{L:equiv. ineq.}, the best constant $C$ in \eqref{main} satisfies $C\approx C' + C''$, where $C'$ and $C''$ are the best constant in \eqref{Vr-inequality} and \eqref{H-inequality}, respectively. \eqref{Vr-inequality} is a discrete Hardy inequality. Characterization of discrete Hardy inequality was given in \cite[Theorem~1]{Be:91} and \cite[Theorem 9.2]{Gr:98} by breaking up to several cases depending on $p,q$. The combination of those results is given in \cite[Theorem~4.6]{GPU-JFA}. 

Let us first rewrite inequality \eqref{Vr-inequality} in a more convenient way by setting $a_i^r=y_i$.  We have
\begin{equation*}
\bigg( \sum_{k=N+1}^{\infty} \bigg( \sum_{i=N+1}^{k} 2^{i\frac{r}{p}} V_r(x_{i-1},x_i)^r y_i \bigg)^{\frac{q}{r}} \int_{x_k}^{x_{k+1}} u \bigg)^{\frac{r}{q}} \leq (C')^r \bigg(\sum_{k=N+1}^{\infty} y_i^{\frac{p}{r}} \bigg)^{\frac{r}{p}}.
\end{equation*}	
Now, applying \cite[Theorem~4.6]{GPU-JFA} with parameters $q/r$ and $p/r$, we obtain if $p\leq r$ and $ p\leq q$ then
\begin{equation*}
C' \approx
\sup_{N+1 \leq k} 2^{\frac{k}{p}} V_r(x_{k-1}, x_k) \bigg(\int_{x_k}^b u\bigg)^{\frac{1}{q}}=A_1,
\end{equation*}
if $q<p\leq r$ then
\begin{equation*}
C' \approx
\bigg(\sum_{k=N+1}^{\infty} \bigg(\int_{x_k}^{x_{k+1}} u\bigg) \bigg(\int_{x_k}^b u\bigg)^{\frac{q}{p-q}} \sup_{N+1 \leq i\leq k}  2^{i\frac{q}{p-q}} V_r(x_{i-1},x_i)^{\frac{pq}{p-q}} \bigg)^{\frac{p-q}{pq}}=A_3 
\end{equation*}
if $r<p$ and $q<p$ then
\begin{equation*}
C' \approx
\bigg(\sum_{k=N+1}^{\infty} \bigg(\int_{x_k}^{x_{k+1}} u \bigg) \bigg(\int_{x_k}^b u \bigg)^{\frac{q}{p-q}} \bigg( \sum_{i=-\infty}^{k} 2^{i\frac{r}{p-r}} V_r(x_{i-1}, x_i)^{\frac{pr}{p-r}} \bigg)^{\frac{q(p-r)}{r(p-q)}}  \bigg)^{\frac{p-q}{pq}}=A_4
\end{equation*}
and if $r<p\leq q$ then
\begin{equation*}
C' \approx
\sup_{N+1 \leq k} \bigg(\int_{x_k}^{b} u\bigg)^{\frac{1}{q}} \bigg(\sum_{i=N+1}^k 2^{i\frac{r}{p-r}} V_r(x_{i-1}, x_i)^{\frac{pr}{p-r}} \bigg)^{\frac{p-r}{pr}}=A_2.
\end{equation*}

To characterize $C''$ in \eqref{H-inequality}, we need to make use of the discrete form of Landau's theorem. To this end applying \cite[Theorem~4.5]{GPU-JFA}, we have
\begin{equation} \label{C''-char.}
C'' \approx \begin{cases}
	\sup\limits_{N+1\leq k} 2^{\frac{k}{p}} H(x_{k-1},x_k) , \quad &\text{if \, $p \leq q$}\\
	\bigg(\sum\limits_{N+1}^{\infty} 2^{k\frac{q}{p-q}} H(x_{k-1},x_k)^{\frac{pq}{p-q}} \bigg)^{\frac{p-q}{pq}} \quad &\text{if \, $q < p $}.
\end{cases}
\end{equation}
Thus, it remains to characterize $H(x_{k-1}, x_k)$. Applying \cite[Theorem~4.7]{GPU-JFA}, we have
\begin{equation}\label{H-char.}
H(x_{k-1}, x_k) \approx \begin{cases}
	\esup\limits_{t\in (x_{k-1},x_k)} \bigg( \int\limits_t^{x_k} u \bigg)^{\frac{1}{q}}  V_r(x_{k-1},t) , \quad &\text{if \, $1 \leq q$}\\
	\bigg( \int\limits_{x_{k-1}}^{x_{k}} \bigg( \int\limits_t^{x_{k}} u \bigg)^{\frac{q}{1-q}} u(t) V_r(x_{k-1},t)^{\frac{q}{1-q}} dt \bigg)^{\frac{1-q}{q}} \quad &\text{if \, $q < 1 $}.
\end{cases}
\end{equation} 
Therefore, combining \eqref{C''-char.} with \eqref{H-char.} we obtain if $p\leq q$ and $1\leq q$, then
\begin{equation*}
C'' \approx  \sup_{N+1 \leq k} 2^{\frac{k}{p}} \sup_{t \in (x_{k-1}, x_{k}) } \bigg( \int_t^{x_{k}} u \bigg)^{\frac{1}{q}}  V_r(x_{k-1}, t) =  B_1,
\end{equation*}
if $p\leq q < 1$, then 
\begin{equation*}
C'' \approx  \sup_{N+1 \leq k} 2^{\frac{k}{p}} \bigg( \int_{x_{k-1}}^{x_{k}} \bigg( \int_t^{x_{k}} u \bigg)^{\frac{q}{1-q}} u(t) V_r(x_{k-1}, t)^{\frac{q}{1-q}}  dt \bigg)^{\frac{1-q}{q}} = B_2,
\end{equation*}
if $1\leq q<p$, then
\begin{equation*}
C'' \approx   \bigg( \sum_{k=N+1}^{\infty} 2^{k\frac{q}{p-q}} \sup_{t \in (x_{k-1}, x_{k})} \bigg(\int_t^{x_{k}} u \bigg)^{\frac{p}{p-q}} V_r(x_{k-1},t)^{\frac{pq}{p-q}} \bigg)^{\frac{p-q}{pq}} = B_4,
\end{equation*}
and finally if $q<p$ and $q<1$, then
\begin{equation*}
C'' \approx   \bigg(\sum_{k=N+1}^{\infty} 2^{k\frac{q}{p-q}} \bigg( \int_{x_{k-1}}^{x_{k}} \bigg( \int_t^{x_{k}} u \bigg)^{\frac{q}{1-q}} u(t) V_r(x_{k-1},t)^{\frac{q}{1-q}} dt \bigg)^{\frac{p(1-q)}{p-q}} \bigg)^{\frac{p-q}{pq}} = B_3.
\end{equation*}
Consequently, the results follow.
\end{proof}

We will continue stating and proving some useful relations that will help prove the main result.

\begin{lem}\label{disc-sup} Let $0<q<p<\infty$, $0< r \leq 1$ and $N \in \mathbb{Z}\cup \{-\infty\}$. Assume that $u,v,w$ are weights on $(a,b)$ such that $0 < \mathcal{W}(x)<\infty$ for all $x\in (a,b)$ and $\{x_k\}_{k=N}^{\infty}$ is a discretizing sequence of $\mathcal{W}$.
Then for each $k : N + 1 \le  k$
\begin{align} \label{antidisc_sup-esup}
\sup_{N+1\le i \le k} 2^{i \frac{q}{p-q}} V_r(x_{i-1}, x_i)^{\frac{pq}{p-q}} \approx \esup_{ t\in (a, x_{k})} \mathcal{W}(t)^{-\frac{q}{p-q}}  V_r(t,x_k)^{\frac{pq}{p-q}} 
\end{align}
holds. 
\end{lem}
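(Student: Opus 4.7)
The plan is to prove both directions of the claimed equivalence by splitting the essential supremum on $(a, x_k)$ along the discretizing partition $\{(x_{j-1}, x_j)\}$ and then translating to the discrete side by means of \eqref{inc.sup-sum}. Set $\alpha = q/(p-q)$ and $\beta = pq/(p-q)$, so that $\beta = p\alpha$.

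We first handle the bound $\LHS\eqref{antidisc_sup-esup} \lesssim \RHS\eqref{antidisc_sup-esup}$: fix $i$ with $N+1 \le i \le k$ and restrict the essential supremum to the subinterval $(x_{i-1}, x_i)$. On this piece, monotonicity of $\mathcal{W}$ and the defining property $\mathcal{W}(x_{i-1}) \approx 2^{-(i-1)}$ give $\mathcal{W}(t)^{-\alpha} \gtrsim 2^{i\alpha}$, while $V_r(t, x_k) \ge V_r(t, x_i)$ by monotonicity of $V_r$ in its second argument. Since $\esup_{t \in (x_{i-1}, x_i)} V_r(t, x_i) = V_r(x_{i-1}, x_i)$ --- by continuity of $V_r$ in the first argument when $0 < r < 1$, and by the insensitivity of the essential supremum to the left endpoint when $r = 1$ --- we deduce that $\RHS\eqref{antidisc_sup-esup} \gtrsim 2^{i\alpha} V_r(x_{i-1}, x_i)^{\beta}$, and taking the supremum over $i$ produces $\LHS\eqref{antidisc_sup-esup}$.

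For the reverse estimate, write $(a, x_k)$ as the disjoint union of $(x_{j-1}, x_j)$ for $N+1 \le j \le k$ modulo measure zero. On each piece the monotonicity bounds $\mathcal{W}(t)^{-\alpha} \le \mathcal{W}(x_j)^{-\alpha} \approx 2^{j\alpha}$ and $V_r(t, x_k) \le V_r(x_{j-1}, x_k)$ yield
\begin{equation*}
\esup_{t \in (a, x_k)} \mathcal{W}(t)^{-\alpha} V_r(t, x_k)^{\beta} \lesssim \sup_{N+1 \le j \le k} 2^{j\alpha} V_r(x_{j-1}, x_k)^{\beta}.
\end{equation*}

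The only mildly technical step is now to collapse $V_r(x_{j-1}, x_k)$ inside this supremum down to the single-block quantity $V_r(x_{j-1}, x_j)$. When $0 < r < 1$, we use the additivity $V_r(x_{j-1}, x_k)^{r} = \sum_{i=j}^{k} V_r(x_{i-1}, x_i)^{r}$ to reduce the claim to an application of \eqref{inc.sup-sum} with the strongly increasing sequence $a_j = 2^{j\alpha}$, exponent $\beta/r$, and $b_i = V_r(x_{i-1}, x_i)^{r}$. When $r = 1$ the identity $V_1(x_{j-1}, x_k) = \max_{j \le i \le k} V_1(x_{i-1}, x_i)$ permits a direct interchange of the two suprema. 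In either case the right-hand side of the displayed inequality is bounded by a constant multiple of $\LHS\eqref{antidisc_sup-esup}$, finishing the proof.
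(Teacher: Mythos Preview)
Your proof is correct and follows essentially the same approach as the paper: both arguments split the essential supremum over $(a,x_k)$ along the discretizing blocks $(x_{j-1},x_j)$ to obtain the intermediate quantity $\sup_{N+1\le j\le k} 2^{j\alpha} V_r(x_{j-1},x_k)^\beta$, and then collapse $V_r(x_{j-1},x_k)$ to $V_r(x_{j-1},x_j)$ via \eqref{inc.sup-sum} for $r<1$ and via interchanging suprema (i.e., \eqref{inc.sup-sup}) for $r=1$. The only cosmetic difference is that the paper packages the two steps as the chain \eqref{xi-xk}\,$+$\,\eqref{P1}, whereas you write out the two directions of the overall equivalence by hand.
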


\begin{proof}
We need to handle the cases $r<1$ and $r=1$, individually. Let us start with the situation when $r<1$. First, using the definition of $V_r$ from \eqref{Vr}, then applying \eqref{inc.sup-sum}, we have 
\begin{align}\label{V-1}
\sup_{N+1\le i \le k} 2^{i \frac{q}{p-q}} V_r(x_{i-1}, x_i)^\frac{pq}{p-q} &= \sup_{N+1\le i \le k} 2^{i \frac{q}{p-q}} \bigg(\int_{x_{i-1}}^{x_i} v^{\frac{1}{1-r}}\bigg)^{\frac{pq(1-r)}{r(p-q)}} \notag \\
& \approx \sup_{N+1\le i \le k} 2^{i \frac{q}{p-q}} \bigg(\sum_{j=i}^k \int_{x_{j-1}}^{x_j} v^{\frac{1}{1-r}}\bigg)^{\frac{pq(1-r)}{r(p-q)}}  \notag \\
&= \sup_{N+1\le i \le k} 2^{i \frac{q}{p-q}} \bigg( \int_{x_{i-1}}^{x_k} v^{\frac{1}{1-r}}\bigg)^{\frac{pq(1-r)}{r(p-q)}}. 
\end{align}
On the other hand, if $r=1$, using the definition of $V_r$ from \eqref{Vr} again, then applying \eqref{inc.sup-sup}, we have 
\begin{align}\label{V-2}
\sup_{N+1\le i \le k} 2^{i \frac{q}{p-q}} V_r(x_{i-1}, x_i)^\frac{pq}{p-q} &= \sup_{N+1\le i \le k} 2^{i \frac{q}{p-q}} \esup_{s\in (x_{i-1}, x_i)} v(s)^\frac{pq}{p-q} \notag \\
& =\sup_{N+1\le i \le k} 2^{i \frac{q}{p-q}} \sup_{i\leq j \leq k} \esup_{s\in (x_{j-1}, x_j)} v(s)^\frac{pq}{p-q} \notag \\
&= \sup_{N+1\le i \le k} 2^{i \frac{q}{p-q}} \esup_{s\in (x_{i-1}, x_k)} v(s)^\frac{pq}{p-q}.
\end{align}
Then, combination of \eqref{V-1} and \eqref{V-2} yields, 
\begin{align}\label{xi-xk}
\sup_{N+1\le i \le k} 2^{i\frac{q}{p-q}} V_r(x_{i-1}, x_i)^\frac{pq}{p-q} &\approx
\sup_{N+1\le i \le k} 2^{i\frac{q}{p-q}} V_r(x_{i-1}, x_k)^\frac{pq}{p-q}.
\end{align}
Finally, applying \eqref{P1} with $h(t)= V_r(t,x_k)$, we arrive at \eqref{antidisc_sup-esup}.
\end{proof}

\begin{lem}
Let $0<r\leq 1$, $0 < p, q < \infty$ and $N \in \mathbb{Z}\cup \{-\infty\}$. Assume that $u,v,w$ are weights on $(a,b)$ such that $0 < \mathcal{W}(x)<\infty$ for all $x\in (a,b)$ and $\{x_k\}_{k=N}^{\infty}$ is a discretizing sequence of $\mathcal{W}$. Then
\begin{equation} \label{N+1<sup}
2^{\frac{N+1}{p}} V_r(a,x_{N+1}) \bigg(\int_{x_{N+1}}^b u\bigg)^{\frac{1}{q}} \leq \sup_{N+1 \leq k} 2^{\frac{k}{p}} \esup_{t \in (x_{k-1}, b) } \bigg( \int_t^b u \bigg)^{\frac{1}{q}}  V_r(x_{k-1}, t)
\end{equation}
holds.
\end{lem}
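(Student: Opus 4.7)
The plan is to estimate the LHS of \eqref{N+1<sup} by the single term corresponding to $k = N+1$ in the supremum on the RHS. Since $x_N = a$ by the definition of a discretizing sequence (whether $N$ is finite or $N = -\infty$), the $k = N+1$ term of the RHS equals
\[
2^{\frac{N+1}{p}} \esup_{t \in (a, b)} \bigg(\int_t^b u\bigg)^{\frac{1}{q}} V_r(a, t),
\]
so it is enough to prove
\[
\esup_{t \in (a, b)} \bigg(\int_t^b u\bigg)^{\frac{1}{q}} V_r(a, t) \geq V_r(a, x_{N+1}) \bigg(\int_{x_{N+1}}^b u\bigg)^{\frac{1}{q}}.
\]

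For every $t \in (a, x_{N+1})$ we have $\int_t^b u \geq \int_{x_{N+1}}^b u$ trivially, so the required bound reduces to
\[
\esup_{t \in (a, x_{N+1})} V_r(a, t) \geq V_r(a, x_{N+1}).
\]
Since $V_r(a, \cdot)$ is non-decreasing, the left-hand side coincides with $\lim_{t \uparrow x_{N+1}} V_r(a, t)$, so what actually needs to be checked is the left-continuity of $V_r(a, \cdot)$ at $x_{N+1}$. For $0 < r < 1$ this is already observed in the excerpt, as $V_r(a, t) = \bigl(\int_a^t v^{\frac{1}{1-r}}\bigr)^{\frac{1-r}{r}}$ is continuous in $t$. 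For $r = 1$, $V_1(a, t) = \esup_{s \in (a, t)} v(s)$ is non-decreasing, and its left limit at $x_{N+1}$ equals $\esup_{s \in (a, x_{N+1})} v(s) = V_1(a, x_{N+1})$ by the standard continuity of essential supremum under monotone exhaustion of the underlying set from within.

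The degenerate case $N = -\infty$ is vacuous: the conventions give $2^{(N+1)/p} = 0$ and $V_r(a, x_{N+1}) = V_r(a, a) = 0$, so the LHS vanishes. The only mild technical point is verifying the left-continuity of $V_r(a, \cdot)$ in the $r = 1$ case; beyond that, the argument is a direct substitution combined with an obvious monotonicity estimate.
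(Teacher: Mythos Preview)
Your argument is correct, and like the paper you bound the LHS by the single term $k=N+1$ on the RHS. The only real difference is the side from which you approach $x_{N+1}$ inside the essential supremum. The paper rewrites
\[
\bigg(\int_{x_{N+1}}^b u\bigg)^{\frac{1}{q}}=\esup_{t\in(x_{N+1},b)}\bigg(\int_t^b u\bigg)^{\frac{1}{q}}
\]
and then uses that $V_r(x_N,t)\geq V_r(x_N,x_{N+1})$ for $t>x_{N+1}$; this avoids any continuity issue for $V_r$, relying only on right-continuity of the integral. You instead restrict to $t\in(a,x_{N+1})$, using $\int_t^b u\geq\int_{x_{N+1}}^b u$ and then the left-continuity of $V_r(a,\cdot)$ at $x_{N+1}$, which you correctly verify separately for $r<1$ and $r=1$. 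Both routes work; the paper's is marginally slicker since it sidesteps the $r=1$ check, but yours is equally valid.
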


\begin{proof}
Since $V_r$ is non-decreasing in the second variable, we have
\begin{align*}
2^{\frac{N+1}{p}} V_r(a,x_{N+1}) \bigg(\int_{x_{N+1}}^b u\bigg)^{\frac{1}{q}} & =  2^{\frac{N+1}{p}} V_r(x_N, x_{N+1}) \esup_{t \in (x_{N+1}, b) } \bigg( \int_t^b u \bigg)^{\frac{1}{q}} \\
& \leq 2^{\frac{N+1}{p}}  \esup_{t \in (x_{N}, b) } \bigg( \int_t^b u \bigg)^{\frac{1}{q}} V_r(x_N, t) \\
& \leq \sup_{N+1 \leq k} 2^{\frac{k}{p}}  \esup_{t \in (x_{k-1}, b) } \bigg( \int_t^b u \bigg)^{\frac{1}{q}} V_r(x_{k-1}, t). 
\end{align*}
\end{proof}

\begin{lem} 
Let $0 < r \leq 1$, $0< r < p <\infty$ and $N \in \mathbb{Z}\cup \{-\infty\}$.  Assume that $v,w$ are weights on $(a,b)$ such that $0 < \mathcal{W}(x)<\infty$ for all $x\in (a,b)$ and $\{x_k\}_{k=N}^{\infty}$ is a discretizing sequence of $\mathcal{W}$. Then for each $k: N+2 \leq k$, 
\begin{align} \label{disc_sum<int}
\sum_{i=N+2}^k 2^{i\frac{r}{p-r}} V_r(x_{i-1}, x_i)^{\frac{pr}{p-r}} \lesssim \int_a^{x_{k-1}} \mathcal{W}(t)^{-\frac{p}{p-r}} w(t) V_r(t,x_k)^{\frac{pr}{p-r}} dt
\end{align}
and for each $k: N+1 \leq k$
\begin{align} \label{disc_int<sum}
\int_a^{x_{k}} \mathcal{W}(t)^{-\frac{p}{p-r}} w(t) V_r(t,x_k)^{\frac{pr}{p-r}} dt \lesssim \sum_{i=N+1}^k 2^{i\frac{r}{p-r}} V_r(x_{i-1}, x_i)^{\frac{pr}{p-r}} 
\end{align}
hold. 
\end{lem}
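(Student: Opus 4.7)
The plan is to reduce both inequalities to a single key computation: on every dyadic piece $(x_{j-1}, x_j)$ of the discretizing sequence one has $\mathcal{W}(t) \approx 2^{-j}$ and $\int_{x_{j-1}}^{x_j} w = \mathcal{W}(x_{j-1}) - \mathcal{W}(x_j) \approx 2^{-j}$, so that
\[
\int_{x_{j-1}}^{x_j} \mathcal{W}(t)^{-\frac{p}{p-r}} w(t)\, dt \;\approx\; 2^{j\frac{r}{p-r}}.
\]
Combined with the monotonicity of $V_r$ (non-increasing in the first variable, non-decreasing in the second), this is the only ingredient needed to convert discrete sums into integrals and vice versa.

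For \eqref{disc_int<sum}, I would first split $\int_a^{x_k}$ over the pieces $(x_{i-1}, x_i)$ and on each piece dominate $V_r(t, x_k)$ by $V_r(x_{i-1}, x_k)$. The key computation then yields
\[
\int_a^{x_k} \mathcal{W}(t)^{-\frac{p}{p-r}} w(t) V_r(t, x_k)^{\frac{pr}{p-r}}\, dt \;\lesssim\; \sum_{i=N+1}^k 2^{i\frac{r}{p-r}} V_r(x_{i-1}, x_k)^{\frac{pr}{p-r}}.
\]
What remains is to shrink the second variable of $V_r$ from $x_k$ back to $x_i$. For $0<r<1$ this rests on the additivity identity $V_r(x_{i-1}, x_k)^{r/(1-r)} = \sum_{j=i}^{k} V_r(x_{j-1}, x_j)^{r/(1-r)}$ inherited directly from the integral definition of $V_r$; after substituting, the strongly increasing sequence $\{2^{ir/(p-r)}\}$ together with the exponent $\alpha = p(1-r)/(p-r)$ makes \eqref{inc.sum-sum} applicable and collapses the iterated sum to exactly the right-hand side of \eqref{disc_int<sum}. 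For $r=1$ the analogous identity is $V_1(x_{i-1}, x_k) = \sup_{i \leq j \leq k} V_1(x_{j-1}, x_j)$, and I would use \eqref{inc.sum-sup} in place of \eqref{inc.sum-sum}.

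For the reverse bound \eqref{disc_sum<int}, I would instead dominate each summand by an integral over the \emph{shifted} piece $(x_{i-2}, x_{i-1})$. The same key computation on that piece gives $\int_{x_{i-2}}^{x_{i-1}} \mathcal{W}(t)^{-p/(p-r)} w(t)\, dt \approx 2^{ir/(p-r)}$, while the monotonicities of $V_r$ provide $V_r(t, x_k) \geq V_r(x_{i-1}, x_k) \geq V_r(x_{i-1}, x_i)$ for every $t \in (x_{i-2}, x_{i-1})$. Summing the resulting pointwise estimates over $i = N+2, \ldots, k$ and using that these shifted intervals are pairwise disjoint and contained in $(a, x_{k-1})$ directly produces \eqref{disc_sum<int}. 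The index shift is also why the sum on the left of \eqref{disc_sum<int} must begin at $N+2$: for $i=N+1$ the would-be shifted piece $(x_{N-1}, x_N)$ lies outside $(a, x_{k-1})$.

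The entire argument is essentially this elementary discretization; the only genuine subtlety is the sub-additive manipulation of $V_r$ in \eqref{disc_int<sum}, where the cases $0<r<1$ and $r=1$ must be routed through different branches of the monotone-sequence lemma (\eqref{inc.sum-sum} versus \eqref{inc.sum-sup}). I do not expect any serious obstacle beyond this bookkeeping.
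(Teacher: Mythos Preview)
Your proposal is correct and follows essentially the same approach as the paper: both proofs reduce to the key estimate $\int_{x_{j-1}}^{x_j}\mathcal{W}(t)^{-p/(p-r)}w(t)\,dt\approx 2^{jr/(p-r)}$ combined with the monotonicity of $V_r$, use the shifted intervals $(x_{i-2},x_{i-1})$ for \eqref{disc_sum<int}, and for \eqref{disc_int<sum} pass through the intermediate sum $\sum_i 2^{ir/(p-r)}V_r(x_{i-1},x_k)^{pr/(p-r)}$ before collapsing it via \eqref{inc.sum-sum} (case $r<1$) or \eqref{inc.sum-sup} (case $r=1$). Your observation about why the sum in \eqref{disc_sum<int} must start at $N+2$ is also exactly the reason implicit in the paper's argument.
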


\begin{proof}
Since $\{x_k\}_{k=N}^{\infty}$ is a discretizing sequence of $\mathcal{W}$, we have $\mathcal{W}(x_k) \approx 2^{-k}$, $N \leq k$. Thus, for each $i: N+2 \leq i$, we have
\begin{equation}\label{w-inc}
\int_{x_{i-2}}^{x_{i-1}} \mathcal{W}(t)^{-\frac{p}{p-r}} w(t) dt \approx \int_{x_{i-2}}^{x_{i-1}} d\bigg[\mathcal{W}(t)^{-\frac{r}{p-r}}\bigg] \approx 2^{i\frac{r}{p-r}}.
\end{equation}
Therefore,
\begin{align*}
\sum_{i=N+2}^k 2^{i\frac{r}{p-r}} V_r(x_{i-1}, x_i)^{\frac{pr}{p-r}} \approx & \sum_{i=N+2}^k \int_{x_{i-2}}^{x_{i-1}} \mathcal{W}(t)^{-\frac{p}{p-r}} w(t) dt \,  V_r(x_{i-1}, x_i)^{\frac{pr}{p-r}} \\
\leq & \sum_{i=N+2}^k \int_{x_{i-2}}^{x_{i-1}} \mathcal{W}(t)^{-\frac{p}{p-r}} w(t) V_r(t, x_i)^{\frac{pr}{p-r}} dt \\
\leq & \sum_{i=N+2}^k \int_{x_{i-2}}^{x_{i-1}} \mathcal{W}(t)^{-\frac{p}{p-r}} w(t) V_r(t, x_k)^{\frac{pr}{p-r}} dt \\
= & \int_a^{x_{k-1}} \mathcal{W}(t)^{-\frac{p}{p-r}} w(t) V_r(t,x_k)^{\frac{pr}{p-r}} dt
\end{align*}
holds. Note that the first and the second inequalities follow from the monotonicity of $V_r$ in the first and the second variables, respectively. 

On the other hand, similar definition of the discretizing sequence and the monotonicity of $V_r$ yields
\begin{align*}
\int_a^{x_{k}} \mathcal{W}(t)^{-\frac{p}{p-r}} w(t) V_r(t,x_k)^{\frac{pr}{p-r}} dt 
& = \sum_{i=N+1}^k \int_{x_{i-1}}^{x_{i}} \mathcal{W}(t)^{-\frac{p}{p-r}} w(t) V_r(t, x_k)^{\frac{pr}{p-r}} dt \\
& \lesssim  \sum_{i=N+1}^k 2^{i\frac{r}{p-r}} V_r(x_{i-1}, x_k)^{\frac{pr}{p-r}}. 
\end{align*}
Showing that
\begin{equation}\label{middle}
\sum_{i=N+1}^k 2^{i\frac{r}{p-r}} V_r(x_{i-1}, x_k)^{\frac{pr}{p-r}} \approx \sum_{i=N+1}^k 2^{i\frac{r}{p-r}} V_r(x_{i-1}, x_i)^{\frac{pr}{p-r}}
\end{equation}
holds completes the proof. To this end, we need to handle the cases $r<1$ and $r=1$, separately. If $r<1$, then using \eqref{Vr} and \eqref{inc.sum-sum}, we obtain
\begin{align*}
\sum_{i=N+1}^k 2^{i\frac{r}{p-r}} V_r(x_{i-1}, x_k)^{\frac{pr}{p-r}} = & \sum_{i=N+1}^k 2^{i\frac{r}{p-r}} \bigg(\sum_{j=i}^k \int_{x_{j-1}}^{x_j} v^{\frac{1}{1-r}}\bigg)^{\frac{p(1-r)}{p-r}} \\
\approx & \sum_{i=N+1}^k 2^{i\frac{r}{p-r}} \bigg( \int_{x_{i-1}}^{x_i} v^{\frac{1}{1-r}}\bigg)^{\frac{p(1-r)}{p-r}} \\
= & \sum_{i=N+1}^k 2^{i\frac{r}{p-r}} V_r(x_{i-1}, x_i)^{\frac{pr}{p-r}}.
\end{align*}
On the other hand, if $r=1$, then using \eqref{Vr} and \eqref{inc.sum-sup}, we get
\begin{align*}
\sum_{i=N+1}^k 2^{i\frac{r}{p-r}} V_r(x_{i-1}, x_k)^{\frac{pr}{p-r}} = & \sum_{i=N+1}^k 2^{i\frac{r}{p-r}} \sup_{i\leq j \leq k} \esup_{s\in (x_{j-1}, x_j)} v(s)^{\frac{pr}{p-r}} \\
\approx & \sum_{i=N+1}^k 2^{i\frac{r}{p-r}} \esup_{s\in (x_{i-1}, x_i)} v(s)^{\frac{pr}{p-r}} \\
= & \sum_{i=N+1}^k 2^{i\frac{r}{p-r}} V_r(x_{i-1}, x_i)^{\frac{pr}{p-r}}.
\end{align*}
Thus \eqref{middle} holds for $0<r\leq 1$.
\end{proof}

\section{Proofs of the main result} \label{S:Proofs}

\begin{proof}[Proof of Theorem \ref{T:main}]

\rm{(i)} Let $p \leq r \leq 1 \leq q$. Theorem~\ref{C:discrete solutions}, case (i) states that the optimal constant $C$ in \eqref{main} satisfies $C\approx A_1 + B_1$, where $\{ x_k\}_{k=N}^{\infty}$ is a discretizing sequence of $\mathcal{W}$. Our aim is to show that $C_1 \approx A_1 + B_1$. 

Applying \eqref{P1} with $\beta = \frac{1}{p}$ and $h(x)= \esup_{t\in (x,b)} \big( \int_t^b u \big)^{\frac{1}{q}}  V_r(x,t)$, we have
\begin{align} \label{C1-equiv}
C_1 \approx \sup_{N+1 \leq k} 2^{\frac{k}{p}} \esup_{t\in (x_{k-1},b)} \bigg( \int_t^b u \bigg)^{\frac{1}{q}}  V_r(x_{k-1},t).
\end{align}
Observe that
\begin{align*}
C_1 &  \approx \sup_{N+1 \leq k} 2^{\frac{k}{p}} \esup_{t\in (x_{k-1},x_k)} \bigg( \int_t^b u \bigg)^{\frac{1}{q}}  V_r(x_{k-1},t)
+ \sup_{N+1 \leq k} 2^{\frac{k}{p}} \sup_{k+1\leq  i} \esup_{t\in (x_{i-1}, x_i)} \bigg( \int_t^b u \bigg)^{\frac{1}{q}}  V_r(x_{k-1},t).
\end{align*}
Since 
\begin{equation}\label{V-cut}
V_r(x_{k-1},t) \approx V_r(x_{k-1}, x_{i-1}) + V_r(x_{i-1}, t) \quad\text{for $t \in (x_{i-1}, x_{i}), \, k < i$,}
\end{equation}
we obtain that
\begin{align*}
C_1 &\approx  \sup_{N+1 \leq k} 2^{\frac{k}{p}} \esup_{t\in (x_{k-1},x_k)} \bigg( \int_t^b u \bigg)^{\frac{1}{q}}  V_r(x_{k-1},t) + \sup_{N+1 \leq k} 2^{\frac{k}{p}} \sup_{k+1\leq  i}  \bigg( \int_{x_{i-1}}^b u \bigg)^{\frac{1}{q}}  V_r(x_{k-1},x_{i-1} )\\
& \quad + \sup_{N+1 \leq k} 2^{\frac{k}{p}} \sup_{k+1\leq i} \esup_{t\in (x_{i-1},x_i)} \bigg( \int_t^b u \bigg)^{\frac{1}{q}}  V_r(x_{i-1},t)\\
& \approx \sup_{N+1 \leq k} 2^{\frac{k}{p}} \sup_{k+1\leq  i}  \bigg( \int_{x_{i-1}}^b u \bigg)^{\frac{1}{q}}  V_r(x_{k-1},x_{i-1} ) + \sup_{N+1 \leq k} 2^{\frac{k}{p}} \sup_{k\leq i} \esup_{t\in (x_{i-1},x_i)} \bigg( \int_t^b u \bigg)^{\frac{1}{q}}  V_r(x_{i-1},t) \\
& =: C_{1,1} + C_{1,2}.
\end{align*}
Reindexing $i-1 \mapsto i$ and changing the order of supremum, we have 
\begin{align*}
C_{1,1} = \sup_{N+1 \leq k} 2^{\frac{k}{p}} \sup_{k\leq  i}  \bigg( \int_{x_{i}}^b u \bigg)^{\frac{1}{q}}  V_r(x_{k-1},x_{i} ) = \sup_{N+1 \leq i} \bigg( \int_{x_{i}}^b u \bigg)^{\frac{1}{q}} \sup_{N+1 \leq k\leq  i}  2^{\frac{k}{p}}  V_r(x_{k-1},x_{i}).
\end{align*}
Applying \eqref{xi-xk} and then changing the order of supremum once again, we get
\begin{align} \label{new for A-1}
C_{1,1} & \approx  \sup_{N+1 \leq i} \bigg( \int_{x_i}^b u \bigg)^{\frac{1}{q}} \sup_{N+1 \leq k\leq  i} 2^{\frac{k}{p}} V_r(x_{k-1}, x_k)=  \sup_{N+1 \leq k} 2^{\frac{k}{p}} V_r(x_{k-1}, x_k) \sup_{k\leq  i}  \bigg( \int_{x_i}^b u \bigg)^{\frac{1}{q}}   =A_1.
\end{align}

On the other hand, applying \eqref{inc.sup-sup} we get
\begin{align}\label{C12 equiv}
C_{1,2} & \approx \sup_{N+1 \leq k} 2^{\frac{k}{p}}  \esup_{t\in (x_{k-1},x_k)} \bigg( \int_t^b u \bigg)^{\frac{1}{q}}  V_r(x_{k-1},t).
\end{align}  
Decomposing the integral $\int_t^b$ into the sum $\int_t^{x_k} + \int_{x_k}^b$,
\begin{align}\label{C12<A1+B1}
C_{1,2} & \approx  \sup_{N+1 \leq k} 2^{\frac{k}{p}}  \esup_{t\in (x_{k-1},x_k)} \bigg( \int_t^{x_k} u \bigg)^{\frac{1}{q}}  V_r(x_{k-1},t) + \sup_{N+1 \leq k} 2^{\frac{k}{p}}   \bigg( \int_{x_k}^b u \bigg)^{\frac{1}{q}}  V_r(x_{k-1},x_k)\nonumber\\
& = B_1 + A_1.
\end{align}  
Therefore, we obtain that 
\begin{equation} \label{C1 A1B1}
  C_1\approx A_1 + B_1.   
\end{equation}
Note that this equivalency is independent of the positions of the parameters. The condition $p\leq r \leq 1 \leq q$ affected only the form of the discrete conditions $A_1$ and $B_1$. 

\medskip
\rm{(ii)} Let $p\leq q <1$ and $p\leq r\leq 1$.  Theorem~\ref{C:discrete solutions}, case (ii) states that $C\approx A_1 + B_2$, thus it is enough to prove that $C_2 \approx A_1 + B_2$. 

Using \eqref{P1} with $\beta = \frac{1}{p}$ and
\begin{equation*}
h(x) = \bigg( \int_x^b \bigg( \int_t^b u \bigg)^{\frac{q}{1-q}} u(t) V_r(x,t)^{\frac{q}{1-q}} dt \bigg)^{\frac{1-q}{q}},
\end{equation*}
we have
\begin{equation}\label{C2=C2*}
C_2 \approx \sup_{N+1 \leq k} 2^{\frac{k}{p}} \bigg( \int_{x_{k-1}}^b \bigg( \int_t^b u \bigg)^{\frac{q}{1-q}} u(t) V_r(x_{k-1},t)^{\frac{q}{1-q}} dt \bigg)^{\frac{1-q}{q}}.
\end{equation}
Using Lemma~\ref{cutinglem}, we have
\begin{align*}
C_2  \approx 
&  \sup_{N+1 \leq k} 2^{\frac{k}{p}} \bigg(\sum_{i=k}^{\infty} \int_{x_{i-1}}^{x_i} \bigg( \int_t^b u \bigg)^{\frac{q}{1-q}} u(t) V_r(x_{k-1},t)^{\frac{q}{1-q}}\,dt \bigg)^{\frac{1-q}{q}} \notag\\
& \lesssim \sup_{N+1 \leq k} 2^{\frac{k}{p}} \bigg(\sum_{i=k}^{\infty} \int_{x_{i-1}}^{x_i} \bigg( \int_t^{x_i} u \bigg)^{\frac{q}{1-q}} u(t) V_r(x_{i-1},t)^{\frac{q}{1-q}}\,dt \bigg)^{\frac{1-q}{q}} \notag\\
& \quad + \sup_{N+1 \leq k} 2^{\frac{k}{p}} \bigg(\sum_{i=k}^{\infty}\bigg( \int_{x_i}^b u \bigg)^{\frac{1}{1-q}}  \int_{x_{i-1}}^{x_i} \,d\big(V_r(x_{k-1},t)^{\frac{q}{1-q}}\big) \bigg)^{\frac{1-q}{q}}\notag\\
& \quad + \sup_{N+1 \leq k} 2^{\frac{k}{p}} \bigg(\sum_{i=k}^{\infty} V_r(x_{k-1}, x_{i-1}+)^{\frac{q}{1-q}} \int_{x_{i-1}}^{x_i} \bigg( \int_t^b u \bigg)^{\frac{q}{1-q}} u(t)\,dt \bigg)^{\frac{1-q}{q}}.
\end{align*}
Applying \cite[Lemma 4.4]{GPU-JFA} with  $s=\frac{1}{1-q}$, $
a_i = \int_{x_i}^{x_{i+1}}  u
$ and
$b_i = V_r(x_{k-1}, x_i+)^{\frac{q}{1-q}}$ for the second term, we can see that
\begin{align*}
C_2     
& \lesssim \sup_{N+1 \leq k} 2^{\frac{k}{p}} \bigg(\sum_{i=k}^{\infty} \int_{x_{i-1}}^{x_i} \bigg( \int_t^{x_i} u \bigg)^{\frac{q}{1-q}} u(t) V_r(x_{i-1},t)^{\frac{q}{1-q}} dt \bigg)^{\frac{1-q}{q}}\notag \\    & \quad + \sup_{N+1 \leq k} 2^{\frac{k}{p}} \bigg(\sum_{i=k}^{\infty} V_r(x_{k-1}, x_{i-1}+)^{\frac{q}{1-q}} \bigg(\int_{x_{i-1}}^{x_i} u\bigg) \bigg( \int_{x_{i-1}}^b u \bigg)^{\frac{q}{1-q}} \bigg)^{\frac{1-q}{q}}\\
& \quad + \sup_{N+1 \leq k} 2^{\frac{k}{p}} \bigg(\sum_{i=k}^{\infty} V_r(x_{k-1}, x_{i-1}+)^{\frac{q}{1-q}} \int_{x_{i-1}}^{x_i} \bigg( \int_t^b u \bigg)^{\frac{q}{1-q}} u(t)\,dt \bigg)^{\frac{1-q}{q}}.
\end{align*}
In view of \eqref{u-estimate}, we obtain
\begin{align}
C_2     
& \lesssim \sup_{N+1 \leq k} 2^{\frac{k}{p}} \bigg(\sum_{i=k}^{\infty} \int_{x_{i-1}}^{x_i} \bigg( \int_t^{x_i} u \bigg)^{\frac{q}{1-q}} u(t) V_r(x_{i-1},t)^{\frac{q}{1-q}}\,dt \bigg)^{\frac{1-q}{q}}\notag \\
& \quad + \sup_{N+1 \leq k} 2^{\frac{k}{p}} \bigg(\sum_{i=k}^{\infty} V_r(x_{k-1}, x_{i-1}+)^{\frac{q}{1-q}} \int_{x_{i-1}}^{x_i} \bigg( \int_t^b u \bigg)^{\frac{q}{1-q}} u(t)\,dt \bigg)^{\frac{1-q}{q}} \notag \\
& =: C_{2.1} + C_{2.2}. \label{c2<c21+c22}
\end{align}
 Using \eqref{inc.sup-sum}, we get 
 \begin{equation} \label{c21<}
  C_{2.1}
 \approx \sup_{N+1 \leq k} 2^{\frac{k}{p}} \bigg( \int_{x_{k-1}}^{x_k} \bigg( \int_t^{x_k} u \bigg)^{\frac{q}{1-q}} u(t) V_r(x_{k-1},t)^{\frac{q}{1-q}} dt \bigg)^{\frac{1-q}{q}} = B_2.
 \end{equation}
Since $q<1$, choosing $a_k = 2^{\frac{k}{p}}$, $b_i = \int_{x_{i}}^{x_{i+1}} \big( \int_t^b u \big)^{\frac{q}{1-q}} u(t)\,dt$, $d_{k,i} = V_r(x_{k-1}, x_{i-2}+)^{\frac{q}{1-q}}$, $\beta = \frac{1-q}{q}$ and applying \eqref{kersup} yields 
\begin{align}
C_{2,2} &\approx  \sup_{N+1 \leq k} 2^{\frac{k}{p}} V_r(x_{k-1}, x_{k-1}+) \bigg(\sum_{i=k}^{\infty}  \int_{x_{i-1}}^{x_{i}} \bigg( \int_t^b u \bigg)^{\frac{q}{1-q}} u(t)\,dt \bigg)^{\frac{1-q}{q}} \notag\\  
&\approx  \sup_{N+1 \leq k} 2^{\frac{k}{p}} V_r(x_{k-1}, x_{k-1}+) \bigg( \int_{x_{k-1}}^b u\bigg)^{\frac{1}{q}}.\notag
\end{align}
Decomposing the integral $\int_{x_{k-1}}^b u$ into the sum $\int_{x_{k-1}}^{x_k} u + \int_{x_k}^b u$ and the monotonicity of $V_r$ gives
\begin{align}
C_{2,2}
& \lesssim \sup_{N+1 \leq k} 2^{\frac{k}{p}} V_r(x_{k-1}, x_{k}) \bigg( \int_{x_{k}}^b u\bigg)^{\frac{1}{q}}  + \sup_{N+1 \leq k} 2^{\frac{k}{p}} V_r(x_{k-1}, x_{k-1}+) \bigg( \int_{x_{k-1}}^{x_{k}} u\bigg)^{\frac{1}{q}} \notag\\
&\lesssim  A_1  +  \sup_{N+1 \leq k} 2^{\frac{k}{p}}  \bigg( \int_{x_{k-1}}^{x_{k}} \bigg(\int_{t}^{x_{k}} u \bigg)^{\frac{q}{1-q}} u(t) V_r(x_{k-1}, t)^{\frac{q}{1-q}}\,dt \bigg)^{\frac{1-q}{q}} \notag \\
&= A_1+B_2.  \label{c22<}
\end{align}
Note that in the expressions above, only the assumption $q<1$ was employed. Therefore, combining \eqref{c2<c21+c22} with \eqref{c21<} and \eqref{c22<}, we obtain $C_2 \lesssim A_1 + B_2$ for $q<1$.

Conversely,  as $q<1$,
\begin{align*}
A_1 \approx  \sup_{N+1 \leq k} 2^{\frac{k}{p}} V_r(x_{k-1}, x_k)   \bigg(\int_{x_k}^b \bigg(\int_t^b   u\bigg)^{\frac{q}{1-q}} u(t) dt \bigg)^{\frac{1-q}{q}}.
\end{align*}
Monotonicity of $V_r$ in the second variable and \eqref{C2=C2*} gives
\begin{align*}
A_1 \lesssim  \sup_{N+1 \leq k} 2^{\frac{k}{p}}  \bigg(\int_{x_k}^b \bigg(\int_t^b   u\bigg)^{\frac{q}{1-q}} u(t) V_r(x_{k-1}, t)^{\frac{q}{1-q}}   dt \bigg)^{\frac{1-q}{q}} \lesssim C_2,
\end{align*}
moreover it is clear from \eqref{C2=C2*} that $B_2 \lesssim C_2$. Thus, we have $A_1 + B_2 \lesssim C_2$ for $q<1$. 

Consequently we arrive at $C_2 \approx A_1 + B_2$ when $q<1$. Note that the conditions $p\le q$ and $p\le r\le 1$ affected only the form of discrete conditions $A_1$ and $B_2$.

\medskip
\rm{(iii)} If  $r < p \leq q$, and $r \leq 1 \leq q$, then we have from Theorem~\ref{C:discrete solutions}, case (iii) that $C\approx A_2 + B_1$, thus it is enough to prove that $C_1 + C_3 \approx A_2 + B_1$. 

First of all, we know from \eqref{C1 A1B1} that $C_1 \approx A_1 + B_1$ holds, and this equivalency is independent of the positions of the parameters. Since $r<p$, $A_1 \leq A_2$ is trivial and we have $C_1 \lesssim A_2 + B_1$ for $r<p$.

On the other hand,
\begin{align}\label{C3-equal}
C_3 = \sup_{N+1 \leq k} \sup_{x \in (x_{k-1},x_k)} \bigg(\int_{x}^b u\bigg)^{\frac{1}{q}} \bigg(\int_a^{x} \mathcal{W}(t)^{-\frac{p}{p-r}} w(t) V_r(t,x)^{\frac{pr}{p-r}} dt \bigg)^{\frac{p-r}{pr}}.
\end{align}
Decomposing the integral $\int_a^x$ into sum $\int_a^{x_{k-1}} + \int_{x_{k-1}}^x$ for $N+2 \leq k$, we have
\begin{align*}
C_3 & \approx  \sup_{N+1 \leq k} \sup_{x \in (x_{k-1}, x_k)} \bigg(\int_{x}^b u\bigg)^{\frac{1}{q}} \bigg(\int_{x_{k-1}}^{x} \mathcal{W}(t)^{-\frac{p}{p-r}} w(t) V_r(t,x)^{\frac{pr}{p-r}} dt \bigg)^{\frac{p-r}{pr}}\\
& \quad + \sup_{N+2 \leq k} \sup_{x \in (x_{k-1}, x_k)} \bigg(\int_{x}^b u\bigg)^{\frac{1}{q}} \bigg(\int_a^{x_{k-1}} \mathcal{W}(t)^{-\frac{p}{p-r}} w(t) V_r(t,x)^{\frac{pr}{p-r}} dt \bigg)^{\frac{p-r}{pr}}\\
& =: C_{3,1} + C_{3,2}.
\end{align*}
Since $r<p$ in this case, using the properties of the discretization sequence $\{x_k\}_{k=N}^{\infty}$, we have
\begin{equation}\label{2k equiv-1}
\int_{x_{k-1}}^{x_k} \mathcal{W}(t)^{-\frac{p}{p-r}} w(t) dt \approx 2^{k\frac{r}{p-r}}, \quad N+1 \leq k
\end{equation}
and
\begin{equation}\label{2k equiv 2}
\int_a^{x_{k-1}} \mathcal{W}(t)^{-\frac{p}{p-r}} w(t) dt \lesssim 2^{k\frac{r}{p-r}}, \quad N+2 \leq k.
\end{equation}

Then, first using the monotonicity of $V_r$ in the first variable and then applying \eqref{2k equiv-1}, and \eqref{C12 equiv}, we get 
\begin{align*}
C_{3,1} &\leq \sup_{N+1 \leq k} \esup_{x \in (x_{k-1}, x_k)}  \bigg(\int_{x}^b u\bigg)^{\frac{1}{q}} V_r(x_{k-1},x) \bigg(\int_{x_{k-1}}^{x} \mathcal{W}(t)^{-\frac{p}{p-r}} w(t)  dt \bigg)^{\frac{p-r}{pr}} \\
&  \leq \sup_{N+1 \leq k} \bigg(\int_{x_{k-1}}^{x_k} \mathcal{W}(t)^{-\frac{p}{p-r}} w(t)  dt \bigg)^{\frac{p-r}{pr}} \esup_{x \in (x_{k-1}, x_k)}  \bigg(\int_{x}^b u\bigg)^{\frac{1}{q}} V_r(x_{k-1},x)   \\
& \approx \sup_{N+1 \leq k} 2^{\frac{k}{p}} \esup_{x \in (x_{k-1}, x_k)}  \bigg(\int_{x}^b u\bigg)^{\frac{1}{q}} V_r(x_{k-1},x)\\
&\approx C_{1,2}.
\end{align*}
In view of \eqref{C12<A1+B1}, the fact that $A_1 \leq A_2$ yields
\begin{equation*}
C_{3,1} \lesssim A_2 + B_1, \quad r<p.
\end{equation*}
Moreover, since 
\begin{equation}\label{Vr-cut-t<x}
V_r(t,x) \approx V_r(t, x_{k-1}) + V_r(x_{k-1}, x),\quad t < x_{k-1} < x,
\end{equation}
applying \eqref{2k equiv 2}, we obtain
\begin{align*}
C_{3,2} &\approx \sup_{N+2 \leq k} \bigg(\int_{x_{k-1}}^b u\bigg)^{\frac{1}{q}} \bigg(\int_a^{x_{k-1}} \mathcal{W}(t)^{-\frac{p}{p-r}} w(t) V_r(t,x_{k-1})^{\frac{pr}{p-r}} dt \bigg)^{\frac{p-r}{pr}} \\
& \quad + \sup_{N+2 \leq k} \bigg(\int_a^{x_{k-1}} \mathcal{W}(t)^{-\frac{p}{p-r}} w(t)  dt \bigg)^{\frac{p-r}{pr}}\esup_{x \in (x_{k-1}, x_k)} \bigg(\int_{x}^b u\bigg)^{\frac{1}{q}} V_r(x_{k-1},x)  \\
& \lesssim \sup_{N+1 \leq k} \bigg(\int_{x_k}^b u\bigg)^{\frac{1}{q}} \bigg(\int_a^{x_{k}} \mathcal{W}(t)^{-\frac{p}{p-r}} w(t) V_r(t,x_{k})^{\frac{pr}{p-r}} dt \bigg)^{\frac{p-r}{pr}} \\
& \quad + \sup_{N+2 \leq k}  2^{\frac{k}{p}} \esup_{x \in (x_{k-1}, x_k)} \bigg(\int_{x}^b u\bigg)^{\frac{1}{q}} V_r(x_{k-1},x).
\end{align*}
Applying \eqref{disc_int<sum} to the first term and \eqref{C12 equiv} and \eqref{C12<A1+B1} combined with the fact that $A_1 \leq A_2$, we have
\begin{align*}
C_{3,2} \lesssim A_2+ B_1, \quad r<p.
\end{align*}
Then, we have $C_3 \lesssim A_2 + B_1$ when $r<p$. 
Thus  $C_1 + C_3 \lesssim A_2 + B_1$ when $r<p$.

Conversely, let us now prove that $A_2 + B_1 \lesssim C_1 + C_3$. Observe that
\begin{align*}
A_2 & \approx  \bigg(\int_{x_{N+1}}^b u\bigg)^{\frac{1}{q}} 2^{\frac{N+1}{p}} V_r(a, x_{N+1}) + \sup_{N+2 \leq k} \bigg(\int_{x_k}^b u\bigg)^{\frac{1}{q}} 2^{\frac{N+1}{p}} V_r(a, x_{N+1}) \\
& \quad + \sup_{N+2 \leq k} \bigg(\int_{x_k}^b u\bigg)^{\frac{1}{q}} \bigg(\sum_{i=N+2}^k 2^{i\frac{r}{p-r}} V_r(x_{i-1}, x_i)^{\frac{pr}{p-r}} \bigg)^{\frac{p-r}{pr}}\\
& \lesssim  \bigg(\int_{x_{N+1}}^b u\bigg)^{\frac{1}{q}} 2^{\frac{N+1}{p}} V_r(a, x_{N+1}) + \sup_{N+2 \leq k} \bigg(\int_{x_k}^b u\bigg)^{\frac{1}{q}} \bigg(\sum_{i=N+2}^k 2^{i\frac{r}{p-r}} V_r(x_{i-1}, x_i)^{\frac{pr}{p-r}} \bigg)^{\frac{p-r}{pr}}.
\end{align*}
Applying \eqref{N+1<sup} to the first term and since $r<p$, applying \eqref{disc_sum<int} to the second term, we get
\begin{align*}
A_2 &\lesssim \sup_{N+1 \leq k} 2^{\frac{k}{p}} \esup_{t \in (x_{k-1}, b) } \bigg( \int_t^b u \bigg)^{\frac{1}{q}}  V_r(x_{k-1}, t)\\ 
& \quad + \sup_{N+1 \leq k} \bigg(\int_{x_k}^b u\bigg)^{\frac{1}{q}} \bigg(\int_a^{x_{k}} \mathcal{W}(t)^{-\frac{p}{p-r}} w(t) V_r(t,x_k)^{\frac{pr}{p-r}} dt\bigg)^{\frac{p-r}{pr}}\\
&\leq \sup_{N+1 \leq k} 2^{\frac{k}{p}} \esup_{t \in (x_{k-1}, b) } \bigg( \int_t^b u \bigg)^{\frac{1}{q}}  V_r(x_{k-1}, t)\\ 
& \quad + \sup_{N+1 \leq k} \sup_{x\in(x_{k-1}, x_k)} \bigg(\int_{x}^b u\bigg)^{\frac{1}{q}} \bigg(\int_a^{x} \mathcal{W}(t)^{-\frac{p}{p-r}} w(t) V_r(t,x)^{\frac{pr}{p-r}} dt\bigg)^{\frac{p-r}{pr}}.
\end{align*}
Thus, in view of \eqref{C1-equiv} and \eqref{C3-equal}, we have $A_2 \lesssim C_1 + C_3$ when $r<p$. 

Moreover, using \eqref{C1 A1B1} we have $C_1 \approx A_1 + B_1$, therefore $B_1\lesssim C_1$. Consequently, $A_2 + B_1 \lesssim C_1 + C_3$ holds when $r<p$. Note that the conditions $p\leq q$ and $r \leq 1 \leq q$ affected only the form of discrete conditions $A_2$ and $B_1$.

\medskip
\rm{(iv)} If  $r < p \leq q <1$, then we have from Theorem~\ref{C:discrete solutions}, case (iv) that $ C\approx A_2 + B_2$, thus it is enough to prove that $C_2 + C_3 \approx A_2 + B_2$. 

To begin with, we know from case (ii) that $C_2 \approx A_1 + B_2$ holds when $q<1$. Therefore 
\begin{equation}\label{B2<C2}
B_2 \lesssim C_2, \quad q<1, 
\end{equation}
and since when $r<p$, $A_1 \leq A_2$ is trivial, 
\begin{equation}\label{C2<A2+B2}
C_2 \lesssim A_2 + B_2, \quad q<1, \, \, r<p.
\end{equation}  
Furthermore, since $r<p$, we have from the case (iii) that $A_2 \lesssim C_1 + C_3$. On the other hand, the following estimate 
\begin{equation}\label{GPU-6.3}
\esup_{s\in (x,y)} \bigg(\int_s^y u\bigg)^{\frac{1}{q}} V_r(x, s) \leq \bigg(\int_x^y \bigg(\int_t^y u\bigg)^{\frac{q}{1-q}} u(t) V_r(x, t)^{\frac{q}{1-q}} dt \bigg)^{\frac{1-q}{q}}
 \end{equation}
yields $C_1 \lesssim C_2$ for $q<1$. Thus we have $A_2 \lesssim C_2 + C_3$ for $q<1$ and $r<p$. Then combining this with \eqref{B2<C2} we have $A_2 + B_2 \lesssim C_2 + C_3$, when $q<1$ and $r<p$.

Conversely, in the proof of case (iii), we have shown that $C_3\lesssim A_2 + B_1$ when $r<p$. Additionally \eqref{GPU-6.3} yields that $B_1 \leq B_2$ when $q<1$. Thus, $C_3\lesssim A_2 + B_2$, when $r<p$ and $q<1$. Combining this with \eqref{C2<A2+B2}, we obtain that $C_2 + C_3 \lesssim A_2 + B_2$ when $r<p$ and $q<1$. Note that the condition $p\leq q$ affected only the form of discrete conditions $A_2$ and $B_2$.

\medskip
\rm{(v)}  If $ q <p \le r \le 1$,  then we have from Theorem~\ref{C:discrete solutions}, case (v) that $C\approx A_3 + B_3$, thus it is enough to prove that $C_4 + C_5 \approx A_3 + B_3$. Observe that $\esup_{t\in (a,x)} \mathcal{W}(t)^{-\frac{q}{p-q}}  V_r(t,x)^{\frac{pq}{p-q}}$ is a regular kernel, indeed for $a \le  x_{k-1} < x < x_k \le b$, first decomposing the supremum, then applying \eqref{Vr-cut-t<x},  we have 
\begin{align*}
\esup_{t\in (a,x)}  \mathcal{W} & \,(t)^{-\frac{q}{p-q}}  V_r(t,x)^{\frac{pq}{p-q}} \\
& \approx \esup_{t\in (a,x_{k-1})}  \mathcal{W}(t)^{-\frac{q}{p-q}}  V_r(t,x)^{\frac{pq}{p-q}}  + \esup_{t\in (x_{k-1},x)}  \mathcal{W}(t)^{-\frac{q}{p-q}}  V_r(t,x)^{\frac{pq}{p-q}}   \\
& \approx \esup_{t\in (a,x_{k-1})}  \mathcal{W}(t)^{-\frac{q}{p-q}}  V_r(t,x_{k-1})^{\frac{pq}{p-q}}  + \mathcal{W}(x_{k-1})^{-\frac{q}{p-q}}  V_r(x_{k-1},x)^{\frac{pq}{p-q}}  \\
&\hskip+1cm + \esup_{t\in (x_{k-1},x)}  \mathcal{W}(t)^{-\frac{q}{p-q}}  V_r(t,x)^{\frac{pq}{p-q}}.
\end{align*}
Moreover, using the fact that $\mathcal{W}(t)\approx W(x_{k-1})$ for each $x_{k-1} < t < x_k$
\begin{align}\label{sup-cut}
\esup_{t\in (a,x)}  \mathcal{W} (t)^{-\frac{q}{p-q}}  V_r(t,x)^{\frac{pq}{p-q}}
& \approx \esup_{t\in (a,x_{k-1})} \mathcal{W}(t)^{-\frac{q}{p-q}}  V_r(t,x_{k-1})^{\frac{pq}{p-q}} \notag\\
& \qquad +  \esup_{t\in (x_{k-1},x)}  \mathcal{W}(t)^{-\frac{q}{p-q}}  V_r(t,x)^{\frac{pq}{p-q}}. 
\end{align}
Since $q<p$,  using Lemma~\ref{cutinglem} we get
\begin{align}
C_4^{\frac{pq}{p-q}} &= \sum_{k=N+1}^{\infty} \int_{x_{k-1}}^{x_k} \bigg(\int_{x}^b u\bigg)^{\frac{q}{p-q}} u(x) \bigg(\esup_{t\in (a,x)} \mathcal{W}(t)^{-\frac{q}{p-q}}  V_r(t,x)^{\frac{pq}{p-q}}\Bigg) dx\\
& \lesssim  \sum_{k=N+1}^{\infty} \int_{x_{k-1}}^{x_k} \bigg(\int_{x}^{x_k} u\bigg)^{\frac{q}{p-q}} u(x) \esup_{t\in (x_{k-1},x)} \mathcal{W}(t)^{-\frac{q}{p-q}}  V_r(t,x)^{\frac{pq}{p-q}} dx \notag\\
& \quad + \sum_{k=N+1}^{\infty}  \bigg(\int_{x_k}^{b} u\bigg)^{\frac{p}{p-q}} \Bigg( \esup_{t\in (a,x_k)} \mathcal{W}(t)^{-\frac{q}{p-q}}  V_r(t,x_k)^{\frac{pq}{p-q}} \notag\\
& \hskip+5cm- \esup_{t\in (a,x_{k-1})} \mathcal{W}(t)^{-\frac{q}{p-q}}  V_r(t,x_{k-1})^{\frac{pq}{p-q}}\Bigg) \notag\\
& \quad + \sum_{k=N+1}^{\infty} \Bigg[ \bigg(\int_{x_{k-1}}^b u\bigg)^{\frac{p}{p-q}} - \bigg(\int_{x_{k}}^b u\bigg)^{\frac{p}{p-q}} \Bigg] \lim_{x\to x_{k-1}+} \esup_{t\in (a,x)} \mathcal{W}(t)^{-\frac{q}{p-q}}  V_r(t,x)^{\frac{pq}{p-q}}\notag\\
& := C_{4,1} + C_{4,2} +  C_{4,3}. 
\label{C4-I-IV}
\end{align}
We will find appropriate upper estimates for $C_{4,1}, C_{4,2},  C_{4,3}$. Let us start with $C_{4,1}$. Using the fact that $\mathcal{W}(t)\approx 2^{-k}$ for each  $t\in (x_{k-1},x_{k})$, we obtain 
\begin{align*}
C_{4,1} \approx \sum_{k=N+1}^{\infty} 2^{k\frac{q}{p-q}} \int_{x_{k-1}}^{x_k} \bigg(\int_{x}^{x_k} u\bigg)^{\frac{q}{p-q}} u(x) V_r(x_{k-1},x)^{\frac{pq}{p-q}} dx.    
\end{align*}
Since $V_r(x_{k-1}, t)$ is an increasing function in $t$ and $\frac{p-q}{p(1-q)} <1$ holds for $p<1$, applying a special case of \cite[Proposition 2.1]{He-St:93}, we get
\begin{align}
&\int_{x_{k-1}}^{x_{k}} \left(\int_x^{x_{k}} u\right)^{\frac{q}{p-q}} u(x) V_r(x_{k-1}, x)^{\frac{pq}{p-q}} dx \notag \\
& \hspace{3cm}\lesssim  \left(\int_{x_{k-1}}^{x_{k}} \left(\int_x^{x_{k}} u\right)^{\frac{q}{1-q}} u(x) V_r(x_{k-1},x)^{\frac{q}{1-q}} dx \right)^{\frac{p(1-q)}{p-q}}.\label{p-q<p-pq}
\end{align}
Note that if $p=1$, then \eqref{p-q<p-pq} is an equality. Thus,
\begin{equation} \label{I-A3B3}
C_{4,1} \lesssim B_3^{\frac{pq}{p-q}}, \quad q<p\leq 1.
\end{equation}

Let us move on to $C_{4,2}$. It is evident that
\begin{align*}
C_{4,2}& \lesssim  \sum_{k=N+2}^{\infty}  \bigg(\int_{x_k}^{b} u\bigg)^{\frac{p}{p-q}} \Bigg( \esup_{t\in (a,x_k)} \mathcal{W}(t)^{-\frac{q}{p-q}}  V_r(t,x_k)^{\frac{pq}{p-q}} \\
& \hskip+5cm - \esup_{t\in (a,x_{k-1})} \mathcal{W}(t)^{-\frac{q}{p-q}}  V_r(t,x_{k-1})^{\frac{pq}{p-q}}\Bigg) \\
& \qquad +  \bigg(\int_{x_{N+1}}^{b} u\bigg)^{\frac{p}{p-q}}  \esup_{t\in (a,x_{N+1})} \mathcal{W}(t)^{-\frac{q}{p-q}}  V_r(t,x_{N+1})^{\frac{pq}{p-q}} \\
& := C_{4,21} + C_{4,22}.
\end{align*}
Further, using \cite[Lemma 4.4]{GPU-JFA} with  $s=\frac{q}{p-q}$
\begin{equation*}
a_k = \int_{x_k}^{x_{k+1}}  u(t)dt
\end{equation*}
and
\begin{equation}\label{bk}
b_k = \esup_{t\in (a,x_k)} \mathcal{W}(t)^{-\frac{q}{p-q}} V_r(t,x_k)^{\frac{pq}{p-q}},
\end{equation}
we can see that
\begin{align*}
C_{4,21}& \lesssim  \sum_{k=N+1}^{\infty}  \bigg(\int_{x_k}^{x_{k+1}} u\bigg)\bigg(\int_{x_k}^{b} u\bigg)^{\frac{q}{p-q}} \esup_{t\in (a,x_k)} \mathcal{W}(t)^{-\frac{q}{p-q}}  V_r(t,x_k)^{\frac{pq}{p-q}}.
\end{align*}
Applying \eqref{antidisc_sup-esup}, we obtain $C_{4,21}  \lesssim A_3^{\frac{pq}{p-q}}$ when $q<p$.

On the other hand, observe that
\begin{align*}
C_{4,22}& \approx \int_{x_{N+1}}^{b} \bigg(\int_{x}^{b} u\bigg)^{\frac{q}{p-q}} u(x)dx \esup_{t\in (a,x_{N+1})} \mathcal{W}(t)^{-\frac{q}{p-q}}  V_r(t,x_{N+1})^{\frac{pq}{p-q}}\\  
&  =\sum_{k=N+1}^\infty \int_{x_{k}}^{x_{k+1}}
\bigg(\int_{x}^{b} u\bigg)^{\frac{q}{p-q}} u(x)dx \esup_{t\in (a,x_{N+1})} \mathcal{W}(t)^{-\frac{q}{p-q}}  V_r(t,x_{N+1})^{\frac{pq}{p-q}}.
\end{align*}
Applying first \eqref{u-estimate}, then \eqref{antidisc_sup-esup}, we have
\begin{align} \label{1}
C_{4,22} \leq \sum_{k=N+1}^{\infty}  \bigg(\int_{x_k}^{x_{k+1}} u\bigg)\bigg(\int_{x_k}^{b} u\bigg)^{\frac{q}{p-q}} \esup_{t\in (a,x_k)} \mathcal{W}(t)^{-\frac{q}{p-q}}  V_r(t,x_k)^{\frac{pq}{p-q}} \approx A_3^{\frac{pq}{p-q}}.
\end{align}
Therefore,
\begin{equation}\label{II-A3}
C_{4,2} \lesssim A_3^{\frac{pq}{p-q}}, \quad q<p.
\end{equation}
Finally, since for $k \geq N+2$
\begin{align*}
\lim_{x\to x_{k-1}+}& \esup_{t\in (a,x)} \mathcal{W}(t)^{-\frac{q}{p-q}}  V_r(t,x)^{\frac{pq}{p-q}} \\
&= \lim_{x\to a+} \esup_{t\in (a,x)} \mathcal{W}(t)^{-\frac{q}{p-q}}  V_r(t,x)^{\frac{pq}{p-q}} \\
&+ \sum_{i=N+2}^{k} \bigg(\lim_{x\to x_{i-1}+} \esup_{t\in (a,x)} \mathcal{W}(t)^{-\frac{q}{p-q}}  V_r(t,x)^{\frac{pq}{p-q}} - \lim_{x\to x_{i-2}+} \esup_{t\in (a,x)} \mathcal{W}(t)^{-\frac{q}{p-q}}  V_r(t,x)^{\frac{pq}{p-q}}\bigg)
\end{align*}
holds, we have
\begin{align}
C_{4,3} &\approx \sum_{k=N+2}^{\infty} \Bigg[ \bigg(\int_{x_{k-1}}^b u\bigg)^{\frac{p}{p-q}} - \bigg(\int_{x_{k}}^b u\bigg)^{\frac{p}{p-q}} \Bigg] \notag\\
& \quad\times \sum_{i=N+2}^{k} \bigg(\lim_{x\to x_{i-1}+} \esup_{t\in (a,x)} \mathcal{W}(t)^{-\frac{q}{p-q}}  V_r(t,x)^{\frac{pq}{p-q}} - \lim_{x\to x_{i-2}+} \esup_{t\in (a,x)} \mathcal{W}(t)^{-\frac{q}{p-q}}  V_r(t,x)^{\frac{pq}{p-q}}\bigg)   \notag\\
&\quad +\sum_{k=N+1}^{\infty} \Bigg[ \bigg(\int_{x_{k-1}}^b u\bigg)^{\frac{p}{p-q}} - \bigg(\int_{x_{k}}^b u\bigg)^{\frac{p}{p-q}} \Bigg]
\lim_{x\to a+} \esup_{t\in (a,x)} \mathcal{W}(t)^{-\frac{q}{p-q}}  V_r(t,x)^{\frac{pq}{p-q}}\notag\\
&= \sum_{k=N+2}^{\infty} \Bigg[ \bigg(\int_{x_{k-1}}^b u\bigg)^{\frac{p}{p-q}} - \bigg(\int_{x_{k}}^b u\bigg)^{\frac{p}{p-q}} \Bigg]\notag \\
& \quad\times \sum_{i=N+2}^{k} \bigg(\lim_{x\to x_{i-1}+} \esup_{t\in (a,x)} \mathcal{W}(t)^{-\frac{q}{p-q}}  V_r(t,x)^{\frac{pq}{p-q}} - \lim_{x\to x_{i-2}+} \esup_{t\in (a,x)} \mathcal{W}(t)^{-\frac{q}{p-q}}  V_r(t,x)^{\frac{pq}{p-q}}\bigg)  \notag \\
&\quad +\bigg(\int_a^b u\bigg)^{\frac{p}{p-q}}
\lim_{x\to a+} \esup_{t\in (a,x)} \mathcal{W}(t)^{-\frac{q}{p-q}}  V_r(t,x)^{\frac{pq}{p-q}}\notag\\
& =: C_{4,31} + C_{4,32}. \label{C43-C431-C432}
\end{align}
Changing the order of sums yields
\begin{align*}
C_{4,31} 
&=   \sum_{i=N+2}^{\infty} \bigg(\int_{x_{i-1}}^b u\bigg)^{\frac{p}{p-q}} \bigg(\lim_{x\to x_{i-1}+} \esup_{t\in (a,x)} \mathcal{W}(t)^{-\frac{q}{p-q}}  V_r(t,x)^{\frac{pq}{p-q}}  \notag\\
&\hspace{5cm} - \lim_{x\to x_{i-2}+} \esup_{t\in (a,x)} \mathcal{W}(t)^{-\frac{q}{p-q}}  V_r(t,x)^{\frac{pq}{p-q}}\bigg).
\end{align*}
Decomposing the integral $\int_{x_{i-1}}^{b}u$ into sum $\int_{x_{i-1}}^{x_i}u + \int_{x_i}^{b}u$, we obtain
\begin{align*}
C_{4,31} 
& \lesssim \sum_{i=N+2}^{\infty} \bigg(\int_{x_{i}}^{b} u\bigg)^{\frac{p}{p-q}} \bigg(\lim_{x\to x_{i-1}+} \esup_{t\in (a,x)} \mathcal{W}(t)^{-\frac{q}{p-q}}  V_r(t,x)^{\frac{pq}{p-q}}  \notag \\
&\hspace{5cm} - \lim_{x\to x_{i-2}+} \esup_{t\in (a,x)} \mathcal{W}(t)^{-\frac{q}{p-q}}  V_r(t,x)^{\frac{pq}{p-q}}\bigg)  \notag \\
& + \sum_{i=N+2}^{\infty} \bigg(\int_{x_{i-1}}^{x_i} u\bigg)^{\frac{p}{p-q}} \lim_{x\to x_{i-1}+} \esup_{t\in (a,x)} \mathcal{W}(t)^{-\frac{q}{p-q}}  V_r(t,x)^{\frac{pq}{p-q}}.     
\end{align*}
Applying \cite[Lemma 4.4]{GPU-JFA} with  $s=\frac{q}{p-q}$, $b_i = \lim_{x\to x_{i-1}+} \esup_{t\in (a,x)} \mathcal{W}(t)^{-\frac{q}{p-q}}  V_r(t,x)^{\frac{pq}{p-q}}$ and $
a_i = \int_{x_i}^{x_{i+1}} u$ for the first term, we have that
\begin{align*}
C_{4,31} 
& \lesssim   \sum_{i=N+1}^{\infty} \bigg(\int_{x_{i}}^{x_{i+1}} u\bigg) \bigg(\int_{x_{i}}^{b} u\bigg)^{\frac{q}{p-q}} \lim_{x\to x_{i-1}+} \esup_{t\in (a,x)} \mathcal{W}(t)^{-\frac{q}{p-q}}  V_r(t,x)^{\frac{pq}{p-q}}  \\
& \quad + \sum_{i=N+2}^{\infty} \bigg(\int_{x_{i-1}}^{x_i} u\bigg)^{\frac{p}{p-q}}\lim_{x\to x_{i-1}+} \esup_{t\in (a,x)} \mathcal{W}(t)^{-\frac{q}{p-q}}  V_r(t,x)^{\frac{pq}{p-q}}. \end{align*}
Next, in view of \eqref{sup-cut}, we have
\begin{align*}
C_{4,31} 
& \lesssim   \sum_{i=N+1}^{\infty} \bigg(\int_{x_{i}}^{x_{i+1}} u\bigg) \bigg(\int_{x_{i}}^{b} u\bigg)^{\frac{q}{p-q}} \esup_{t\in (a,x_i)} \mathcal{W}(t)^{-\frac{q}{p-q}}  V_r(t,x_i)^{\frac{pq}{p-q}}  \\
& \quad + \sum_{i=N+2}^{\infty} \bigg(\int_{x_{i-1}}^{x_i} u\bigg)^{\frac{p}{p-q}} \esup_{t\in (a,x_{i-1})} \mathcal{W}(t)^{-\frac{q}{p-q}}  V_r(t,x_{i-1})^{\frac{pq}{p-q}}\\
& \quad +\sum_{i=N+2}^{\infty} \bigg(\int_{x_{i-1}}^{x_i} u\bigg)^{\frac{p}{p-q}}\lim_{x\to x_{i-1}+} \esup_{t\in (x_{i-1},x)} \mathcal{W}(t)^{-\frac{q}{p-q}}  V_r(t,x)^{\frac{pq}{p-q}}\\
& \lesssim  \sum_{i=N+1}^{\infty} \bigg(\int_{x_{i}}^{x_{i+1}} u\bigg) \bigg(\int_{x_{i}}^{b} u\bigg)^{\frac{q}{p-q}} \esup_{t\in (a,x_i)} \mathcal{W}(t)^{-\frac{q}{p-q}}  V_r(t,x_i)^{\frac{pq}{p-q}}  \\
& \quad +\sum_{i=N+2}^{\infty} \bigg(\int_{x_{i-1}}^{x_i} u\bigg)^{\frac{p}{p-q}}\lim_{x\to x_{i-1}+} \esup_{t\in (x_{i-1},x)} \mathcal{W}(t)^{-\frac{q}{p-q}}  V_r(t,x)^{\frac{pq}{p-q}}.  
\end{align*}
Applying \eqref{antidisc_sup-esup} for the first term and using the properties of discretizing sequence for the second term, we obtain
\begin{align}
C_{4,31} & \lesssim A_3^{\frac{pq}{p-q}} + \sum_{i=N+2}^{\infty} \bigg(\int_{x_{i-1}}^{x_i} \bigg(\int_t^{x_i} u\bigg)^{\frac{q}{1-q}} u(t) dt \bigg)^{\frac{p(1-q)}{p-q}}  2^{i\frac{q}{p-q}}   V_r(x_{i-1},x_{i-1}+)^{\frac{pq}{p-q}} \notag \\
 & \leq A_3^{\frac{pq}{p-q}} + \sum_{i=N+2}^{\infty} 2^{i\frac{q}{p-q}}   \bigg(\int_{x_{i-1}}^{x_i} \bigg(\int_t^{x_i} u\bigg)^{\frac{q}{1-q}} u(t) V_r(x_{i-1},t)^{\frac{q}{1-q}} dt \bigg)^{\frac{p(1-q)}{p-q}} \notag \\
 & \leq A_3^{\frac{pq}{p-q}} + B_3^{\frac{pq}{p-q}}. \label{C431-A3-B3}
\end{align}
Lastly, monotonicity of $V_r$ and $\mathcal{W}$, gives
\begin{align*}
C_{4,32} &\leq  \bigg(\int_{a}^b u\bigg)^{\frac{p}{p-q}}  \lim_{x\to a+} \mathcal{W}(x)^{-\frac{q}{p-q}}  V_r(a,x)^{\frac{pq}{p-q}} = \bigg(\int_{a}^{b}  u\bigg)^{\frac{p}{p-q}} \mathcal{W}(a)^{-\frac{q}{p-q}} \lim_{x\to a+}    V_r(a,x)^{\frac{pq}{p-q}}.
\end{align*}
Decomposing the integral $\int_a^b u$ into sum $\int_a^{x_{N+1}} u + \int_{x_{N+1}}^b u$, together with the fact that $\mathcal{W}(a)\approx 2^{-N}$  gives
\begin{align*}
C_{4,32}  & \lesssim  \bigg(\int_{x_{N+1}}^b  u \bigg)^{\frac{p}{p-q}} \mathcal{W}(a)^{-\frac{q}{p-q}} V_r(a,x_{N+1})^{\frac{pq}{p-q}} + \bigg(\int_a^{x_{N+1}}  u\bigg)^{\frac{p}{p-q}}  2^{N\frac{q}{p-q}}  \lim_{x\to a+} V_r(a,x)^{\frac{pq}{p-q}} 
	\\
	& \lesssim C_{4,22} + 2^{N\frac{q}{p-q}}  \bigg(\int_a^{x_{N+1}}  \bigg(\int_t^{x_{N+1}} u\bigg)^{\frac{q}{1-q}} u(t) dt \bigg)^{\frac{p(1-q)}{p-q}} \lim_{x\to a+} V_r(a,x)^{\frac{pq}{p-q}}  \\
	& \leq C_{4,22} + 2^{N\frac{q}{p-q}} \bigg(\int_a^{x_{N+1}}  \bigg(
	\int_t^{x_{N+1}} u\bigg)^{\frac{q}{1-q}} u(t)   V_r(a,t)^{\frac{q}{1-q}} dt \bigg)^{\frac{p(1-q)}{p-q}}.
\end{align*} 
Then in view of \eqref{1}, we have
\begin{align}\label{C432-A3-B3}
	C_{4,32} & \lesssim A_3^{\frac{pq}{p-q}}+ B_3^{\frac{pq}{p-q}}, \quad q<p\le 1.
\end{align}
For future reference note that we have proved that
\begin{equation}\label{new-1}
\bigg(\int_{a}^{b}  u\bigg)^{\frac{p}{p-q}} \mathcal{W}(a)^{-\frac{q}{p-q}} \lim_{x\to a+}    V_r(a,x)^{\frac{pq}{p-q}} \lesssim    A_3^{\frac{pq}{p-q}}+ B_3^{\frac{pq}{p-q}} 
\end{equation}
when $q<p\le 1$. Combining   \eqref{C43-C431-C432}, \eqref{C431-A3-B3} and  \eqref{C432-A3-B3}  we obtain 
\begin{align}\label{C43-A3-B3}
C_{4,3} & \lesssim A_3^{\frac{pq}{p-q}}+ B_3^{\frac{pq}{p-q}}, \quad q<p\le 1.
\end{align}
Therefore, combining  \eqref{C4-I-IV}, \eqref{I-A3B3}, \eqref{II-A3} and \eqref{C43-A3-B3}, we arrive at
\begin{equation} \label{C4<A3 A4}
C_4\lesssim A_3+ B_3, \quad q<p\leq 1.
\end{equation} 

We will now prove that $C_5 \lesssim A_3 + B_3$. Using the fact that $\int_{x_{k-1}}^{x_k}w \approx 2^{-k}, k\geq N+1$, we get
\begin{align}
C_5^{\frac{pq}{p-q}} & = \sum_{k= N+1}^\infty  \int_{x_{k-1}}^{x_{k}}  w(x)  \esup_{y\in 
	(a, x)} \mathcal{W}(y)^{-\frac{p}{p-q}}\bigg( \int_{y}^x \bigg( \int_t^x u \bigg)^{\frac{q}{1-q}} u(t) V_r(y, t)^{\frac{q}{1-q}} dt \bigg)^{\frac{p(1-q)}{p-q}} dx \notag\\
& \lesssim \sum_{k= N+1}^\infty 2^{-k} \esup_{y\in 
	(a, x_{k})} \mathcal{W}(y)^{-\frac{p}{p-q}}\bigg( \int_y^{x_{k}} \bigg( \int_t^{x_k} u \bigg)^{\frac{q}{1-q}} u(t) V_r(y, t)^{\frac{q}{1-q}} dt \bigg)^{\frac{p(1-q)}{p-q}}  \notag\\
&= \sum_{k=N+1}^{\infty }  2^{-k} \sup_{N+1 \le i\le k} 
\esup_{y\in(x_{i-1},x_i)} \mathcal{W}(y)^{-\frac{p}{p-q}}  \bigg( \int_y^{x_{k}} \Bigg( \int_t^{x_k}u \bigg)^{\frac{q}{1-q}} u(t) V_r(y,t)^{\frac{q}{1-q}} dt \Bigg)^{\frac{p(1-q)}{p-q}}. \notag 
\end{align}
Similarly, properties of the discretizing sequence yields,
\begin{align}
C_5^{\frac{pq}{p-q}} & \lesssim \sum_{k=N+2}^{\infty }  2^{-k} \sup_{N+1\le  i  \le k} 2^{i\frac{p}{p-q}} \left( \int_{x_{i-1}}^{x_k} \left( \int_t^{x_{k}}u \right)^{\frac{q}{1-q}} u(t) V_r(x_{i-1},t)^{\frac{q}{1-q}} dt \right)^{\frac{p(1-q)}{p-q}}\notag\\
& \quad +  2^{N\frac{q}{p-q}} \left( \int_{x_{N}}^{x_{N+1}} \left( \int_t^{x_{N+1}}u \right)^{\frac{q}{1-q}} u(t) V_r(x_N,t)^{\frac{q}{1-q}} dt \right)^{\frac{p(1-q)}{p-q}}\notag\\
& = \sum_{k=N+2}^{\infty }  2^{-k} \sup_{N+1\le  i \le k} 2^{i\frac{p}{p-q}}\left(
\sum_{j=i}^{k} \int_{x_{j-1}}^{x_{j}} \left( \int_t^{x_{k}}u \right)^{\frac{q}{1-q}} u(t) V_r(x_{i-1},t)^{\frac{q}{1-q}} dt \right)^{\frac{p(1-q)}{p-q}}\notag\\
& \quad +  2^{N\frac{q}{p-q}} \left( \int_{x_{N}}^{x_{N+1}} \left( \int_t^{x_{N+1}}u \right)^{\frac{q}{1-q}} u(t) V_r(x_N,t)^{\frac{q}{1-q}} dt \right)^{\frac{p(1-q)}{p-q}}.\label{C5-1}
\end{align}
Applying Lemma~\ref{cutinglem} for $ a=x_{i-1} $,  $ b=x_{k} $,  $ c=x_{j-1} $, $ d=x_{j}$ for $N+1 \leq i \leq j \leq k$, $k(x,y)=V_r(x,y)^{\frac{q}{1-q}}$ and $\beta = \frac{q}{1-q}$, we obtain
\begin{align}\label{cuting}
&\int_{x_{j-1}}^{x_{j}} \left( \int_t^{x_{k}}u \right)^{\frac{q}{1-q}} u(t) V_r(x_{i-1},t)^{\frac{q}{1-q}} dt \notag\\
& \hskip+1cm\lesssim \int_{x_{j-1}}^{x_{j}} \left( \int_t^{x_{j}}u \right)^{\frac{q}{1-q}}u(t) V_r(x_{j-1},t)^{\frac{q}{1-q}}dt + \left( \int_{x_{j}}^
{x_{k}}u \right)^{\frac{1}{1-q}}\int_{x_{j-1}}^{x_{j}}  d\big(V_r(x_{i-1},t)^{\frac{q}{1-q}}\big) \notag \\
&  \hskip+3.5cm +  V_r(x_{i-1}, x_{j-1}+)^{\frac{q}{1-q}}  
\Bigg[\bigg( \int_{x_{j-1}}^{x_k}u \bigg)^{\frac{1}{1-q}}-\bigg( \int_{x_{j}}^{x_k}u \bigg)^{\frac{1}{1-q}}\Bigg]. 
\end{align}
Then, applying \eqref{cuting}, we get
\begin{align*}
\sum_{j=i}^{k} \int_{x_{j-1}}^{x_{j}} & \left( \int_t^{x_{k}}u \right)^{\frac{q}{1-q}} u(t)V_r(x_{i-1},t)^{\frac{q}{1-q}}dt \\
&\lesssim\sum_{j=i}^{k} \int_{x_{j-1}}^{x_{j}} \left( \int_t^{x_{j}}u \right)^{\frac{q}{1-q}} u(t)V_r(x_{j-1},t)^{\frac{q}{1-q}}dt \notag\\
& \quad +\sum_{j=i}^{k-1} 
\left( \int_{x_{j}}^{x_{k}}u \right)^{\frac{1}{1-q}}\int_{x_{j-1}}^{x_{j}}d\big(V_r(x_{i-1},t)^{\frac{q}{1-q}}\big) \notag \\
&\quad  +  \sum_{j=i}^{k}  V_r(x_{i-1}, x_{j-1}+)^{\frac{q}{1-q}} \Bigg[\bigg(\int_{x_{j-1}}^{x_{k}} u \bigg)^{\frac{1}{1-q}}-\bigg(\int_{x_{j}}^{x_{k}} u \bigg)^{\frac{1}{1-q}}\Bigg]\\
& \lesssim  \sum_{j=i}^{k} \int_{x_{j-1}}^{x_{j}} \left( \int_t^{x_{j}}u \right)^{\frac{q}{1-q}} u(t)V_r(x_{j-1},t)^{\frac{q}{1-q}}dt\notag \\
& +  \sum_{j=i}^{k} 
\left( \int_{x_{j-1}}^{x_j} u \right) \left( \int_{x_{j-1}}^{x_k} u \right)^{\frac{q}{1-q}} V_r(x_{i-1}, x_{j-1}+)^{\frac{q}{1-q}}.
\end{align*}
Note that to obtain the last inequality, we have applied \cite[Lemma~4.4]{GPU-JFA} for the second term and  \eqref{u-estimate}  for the third term. 

Now, plugging the last estimate in \eqref{C5-1}, then applying \eqref{inc.sup-sum} in the first term, we have
\begin{align*}
C_{5}^{\frac{pq}{p-q}}
& \lesssim  \sum_{k=N+2}^{\infty }  2^{-k} \sup_{N+1\le  i \le k}  2^{i\frac{p}{p-q}} \Bigg( \sum_{j=i}^{k}  \int_{x_{j-1}}^{x_{j}} \left( \int_t^{x_{j}}u \right)^{\frac{q}{1-q}} u(t)V_r(x_{j-1},t)^{\frac{q}{1-q}}dt\Bigg)^{\frac{p(1-q)}{p-q}} \notag \\
& \quad +  \sum_{k=N+2}^{\infty }  2^{-k}\sup_{N+1\le  i \le k}  2^{i\frac{p}{p-q}}  \Bigg(\sum_{j=i}^{k} 
\left( \int_{x_{j-1}}^{x_j} u \right) \left( \int_{x_{j-1}}^{b} u \right)^{\frac{q}{1-q}}  V_r(x_{i-1}, x_{j-1}+)^{\frac{q}{1-q}} \Bigg)^{\frac{p(1-q)}{p-q}}\\
& \quad +  2^{N\frac{q}{p-q}} \left( \int_{x_{N}}^{x_{N+1}} \left( \int_t^{x_{N+1}}u \right)^{\frac{q}{1-q}} u(t) V_r(x_N,t)^{\frac{q}{1-q}} dt \right)^{\frac{p(1-q)}{p-q}}\\
& \approx \sum_{k=N+1}^{\infty }  2^{-k} \sup_{N+1\le  i \le k}  2^{i\frac{p}{p-q}} \Bigg( \int_{x_{i-1}}^{x_{i}} \left( \int_t^{x_{i}}u \right)^{\frac{q}{1-q}} u(t)V_r(x_{i-1},t)^{\frac{q}{1-q}}dt\Bigg)^{\frac{p(1-q)}{p-q}} \notag \\
& \quad +  \sum_{k=N+2}^{\infty }  2^{-k}\sup_{N+1\le  i \le k}  2^{i\frac{p}{p-q}}  \Bigg(\sum_{j=i}^{k} 
\left( \int_{x_{j-1}}^{x_j} u \right) \left( \int_{x_{j-1}}^{b} u \right)^{\frac{q}{1-q}}  V_r(x_{i-1}, x_{j-1}+)^{\frac{q}{1-q}} \Bigg)^{\frac{p(1-q)}{p-q}}\\
&:=C_{5,1} + C_{5,2}.
\end{align*}
It is easy to see that, \eqref{dec.sum-sup} yields
\begin{align*}
C_{5,1}  \approx \sum_{k=N+1}^{\infty }  2^{k\frac{q}{p-q}} \Bigg(  \int_{x_{k-1}}^{x_{k}} \left( \int_t^{x_{k}}u \right)^{\frac{q}{1-q}} u(t)V_r(x_{k-1},t)^{\frac{q}{1-q}}dt\Bigg)^{\frac{p(1-q)}{p-q}}  = B_3^{\frac{pq}{p-q}}.
\end{align*}
On the other hand,
\begin{align*}
C_{5,2} & \leq   \sum_{k=N+2}^{\infty }  2^{-k} \sup_{N+1 \leq i \leq k}\Bigg(\sum_{j=i}^{k} 
\left( \int_{x_{j-1}}^{x_j} u \right) \left( \int_{x_{j-1}}^{b} u \right)^{\frac{q}{1-q}} \\
&\hskip+5cm  \times \sup_{N+1 \le m \le j} 2^{m\frac{1}{1-q}}  V_r(x_{m-1}, x_{j-1}+)^{\frac{q}{1-q}} \Bigg)^{\frac{p(1-q)}{p-q}}\\
& =   \sum_{k=N+2}^{\infty }  2^{-k} \Bigg(\sum_{j=N+1}^{k} 
\left( \int_{x_{j-1}}^{x_j} u \right) \left( \int_{x_{j-1}}^{b} u \right)^{\frac{q}{1-q}} \sup_{N+1 \le m \le j} 2^{m\frac{1}{1-q}}  V_r(x_{m-1}, x_{j-1}+)^{\frac{q}{1-q}} \Bigg)^{\frac{p(1-q)}{p-q}}\\
&\approx \sum_{k=N+2}^{\infty }  2^{-k} \Bigg(\sum_{j=N+2}^{k} 
\left( \int_{x_{j-1}}^{x_j} u \right) \left( \int_{x_{j-1}}^{b} u \right)^{\frac{q}{1-q}} \! \sup_{N+1 \le m \le j} 2^{m\frac{1}{1-q}}  V_r(x_{m-1}, x_{j-1}+)^{\frac{q}{1-q}} \Bigg)^{\frac{p(1-q)}{p-q}} \\
& \quad + \sum_{k=N+2}^{\infty }  2^{-k} \Bigg(\left( \int_{x_N}^{x_{N+1}} u \right) \left( \int_{x_N}^{b} u \right)^{\frac{q}{1-q}} 2^{(N+1)\frac{1}{1-q}}  V_r(x_N, x_{N}+)^{\frac{q}{1-q}} \Bigg)^{\frac{p(1-q)}{p-q}}\\
&=: C_{5,21} + C_{5,22}.
\end{align*}
Note that if $0<r<1$ then $C_{5,22}=0$. If $r=1$, we have 
\begin{equation}\label{inc-sum-2N}
\sum_{k=N+2}^{\infty }  2^{-k} \approx 2^{-N}.
\end{equation}
Then, \eqref{inc-sum-2N} together with the fact that $\mathcal{W}(a)\approx 2^{-N}$ and \eqref{new-1} yields
\begin{align}\label{new-2}
C_{5,22} \lesssim  2^{N\frac{q}{p-q}} \left( \int_{x_N}^{b} u \right)^{\frac{p}{p-q}}  V_r(x_N, x_{N}+)^{\frac{pq}{p-q}} \lesssim A_3^{\frac{pq}{p-q}} + B_3^{\frac{pq}{p-q}}.
\end{align}
Now, applying \eqref{dec.sum-sum}, we obtain
\begin{align*}
C_{5,21}& \approx \sum_{k=N+2}^{\infty }  2^{-k} 
\left( \int_{x_{k-1}}^{x_k} u \right)^{\frac{p(1-q)}{p-q}}\left( \int_{x_{k-1}}^{b} u \right)^{\frac{pq}{p-q}} \sup_{N+1 \le m \le k} 2^{m\frac{p}{p-q}}  V_r(x_{m-1}, x_{k-1}+)^{\frac{pq}{p-q}}.
\end{align*}
Since $\frac{p(1-q)}{p-q} \ge 1$ in this case, 
\begin{align*}
C_{5,21} 
& = \sum_{k=N+2}^{\infty }  2^{-k} \left( \int_{x_{k-1}}^{x_k} u \right)^{\frac{p(1-q)}{p-q}-1} \left( \int_{x_{k-1}}^{x_k} u \right)\left( \int_{x_{k-1}}^{b} u \right)^{\frac{pq}{p-q}}\\
&\hskip+5cm \times \sup_{N+1 \le m \le k} 2^{m\frac{p}{p-q}}  V_r(x_{m-1}, x_{k-1}+)^{\frac{pq}{p-q}} \\
& \approx  \sum_{k=N+2}^{\infty }  
\left( \int_{x_{k-1}}^{x_k} u \right)\left( \int_{x_{k-1}}^{b} u \right)^{\frac{q}{p-q}} \sup_{N+1 \le m \le k} 2^{m\frac{q}{p-q}}  V_r(x_{m-1}, x_{k-1}+)^{\frac{pq}{p-q}}.
\end{align*}
Choosing $a_k = \int_{x_{k-1}}^{x_k}u$, $b_k=\sup_{N+1 \le m \le k} 2^{m\frac{q}{p-q}}  V_r(x_{m-1},x_{k-1}+)^{\frac{pq}{p-q}}$, $s=\frac{q}{p-q}$ and applying \cite[Lemma~4.4]{GPU-JFA}, we get that
\begin{align*}
C_{5,21} &\approx  \sum_{k=N+3}^{\infty }  
\left( \sum_{i=k}^{\infty} \int_{x_{i-1}}^{x_i} u \right)^{\frac{p}{p-q}} \Bigg[ \sup_{N+1 \le m \le k} 2^{m\frac{q}{p-q}}  V_r(x_{m-1},x_{k-1}+)^{\frac{pq}{p-q}} \\
& \hspace{5cm} -\sup_{N+1 \le m \le k-1} 2^{m\frac{q}{p-q}}  V_r(x_{m-1},x_{k-2}+)^{\frac{pq}{p-q}}\Bigg]\\
& \quad + \bigg(\sum_{k=N+2}^{\infty} \int_{x_{k-1}}^{x_k} u \bigg)^{\frac{p}{p-q}} \sup_{N+1 \le m \le N+2} 2^{m\frac{q}{p-q}}  V_r(x_{m-1},x_{N+1}+)^{\frac{pq}{p-q}}\\
&=\sum_{k=N+3}^{\infty }  
\left(  \int_{x_{k-1}}^b u \right)^{\frac{p}{p-q}} \Bigg[ \sup_{N+1 \le m \le k} 2^{m\frac{q}{p-q}}  V_r(x_{m-1},x_{k-1}+)^{\frac{pq}{p-q}} \\
& \hspace{5cm} -\sup_{N+1 \le m \le k-1} 2^{m\frac{q}{p-q}}  V_r(x_{m-1},x_{k-2}+)^{\frac{pq}{p-q}}\Bigg]\\
& \quad + \bigg(\int_{x_{N+1}}^{b} u \bigg)^{\frac{p}{p-q}} \sup_{N+1 \le m \le N+2} 2^{m\frac{q}{p-q}}  V_r(x_{m-1},x_{N+1}+)^{\frac{pq}{p-q}}.
\end{align*}
Decomposing the integral $\int_{x_{k-1}}^b u$ into sum $\int_{x_{k-1}}^{x_k} u + \int_{x_{k}}^b u$ for $k\geq N+2$ gives
\begin{align*}
C_{5,21} & \lesssim \sum_{k=N+3}^{\infty }  
\left(  \int_{x_{k-1}}^{x_k} u \right)^{\frac{p}{p-q}} \sup_{N+1 \le m \le k} 2^{m\frac{q}{p-q}}  V_r(x_{m-1},x_{k-1}+)^{\frac{pq}{p-q}} \\
&\quad +\sum_{k=N+3}^{\infty }  
\left(  \int_{x_{k}}^b u \right)^{\frac{p}{p-q}} \Bigg[ \sup_{N+1 \le m \le k} 2^{m\frac{q}{p-q}}  V_r(x_{m-1},x_{k-1}+)^{\frac{pq}{p-q}} \\
& \hspace{5cm} -\sup_{N+1 \le m \le k-1} 2^{m\frac{q}{p-q}}  V_r(x_{m-1},x_{k-2}+)^{\frac{pq}{p-q}}\Bigg]\\
& \quad + \bigg(\int_{x_{N+1}}^{x_{N+2}} u \bigg)^{\frac{p}{p-q}} \sup_{N+1 \le m \le N+2} 2^{m\frac{q}{p-q}}  V_r(x_{m-1},x_{N+1}+)^{\frac{pq}{p-q}}\\
& \quad + \bigg(\int_{x_{N+2}}^{b} u \bigg)^{\frac{p}{p-q}} \sup_{N+1 \le m \le N+2} 2^{m\frac{q}{p-q}}  V_r(x_{m-1},x_{N+1}+)^{\frac{pq}{p-q}}\\
 & \approx \sum_{k=N+2}^{\infty }  
\left(  \int_{x_{k-1}}^{x_k} u \right)^{\frac{p}{p-q}} \sup_{N+1 \le m \le k} 2^{m\frac{q}{p-q}}  V_r(x_{m-1},x_{k-1}+)^{\frac{pq}{p-q}} \\
&\quad  +\sum_{k=N+3}^{\infty }  
\left(  \int_{x_{k}}^b u \right)^{\frac{p}{p-q}} \Bigg[ \sup_{N+1 \le m \le k} 2^{m\frac{q}{p-q}}  V_r(x_{m-1},x_{k-1}+)^{\frac{pq}{p-q}} \\
& \hspace{5cm} -\sup_{N+1 \le m \le k-1} 2^{m\frac{q}{p-q}}  V_r(x_{m-1},x_{k-2}+)^{\frac{pq}{p-q}}\Bigg]\\
& \quad  + \bigg(\sum_{i=N+2}^{\infty}\int_{x_{i}}^{x_{i+1}}  u \bigg)^{\frac{p}{p-q}} \sup_{N+1 \le m \le N+2} 2^{m\frac{q}{p-q}}  V_r(x_{m-1},x_{N+1}+)^{\frac{pq}{p-q}}.
\end{align*}
Decomposing the supremum in the first term and applying power rules (see for instance \cite[Lemma~1]{Be-Gr:05}) in the third term yields 
\begin{align*}
C_{5,21}& \lesssim \sum_{k=N+2}^{\infty }  
\left(  \int_{x_{k-1}}^{x_k} u \right)^{\frac{p}{p-q}} \sup_{N+1 \le m \le k-1} 2^{m\frac{q}{p-q}}  V_r(x_{m-1},x_{k-1})^{\frac{pq}{p-q}} \\
& \quad  + \sum_{k=N+2}^{\infty }  
\left(  \int_{x_{k-1}}^{x_k} u \right)^{\frac{p}{p-q}}  2^{k\frac{q}{p-q}}  V_r(x_{k-1},x_{k-1}+)^{\frac{pq}{p-q}}\\
&\quad  +\sum_{k=N+3}^{\infty }  
\left(  \int_{x_{k}}^b u \right)^{\frac{p}{p-q}} \Bigg[ \sup_{N+1 \le m \le k} 2^{m\frac{q}{p-q}}  V_r(x_{m-1},x_{k-1}+)^{\frac{pq}{p-q}} \\
& \hspace{5cm} -\sup_{N+1 \le m \le k-1} 2^{m\frac{q}{p-q}}  V_r(x_{m-1},x_{k-2}+)^{\frac{pq}{p-q}}\Bigg]\\
& \quad  + \sum_{i=N+2}^{\infty}\bigg(\int_{x_{i}}^{x_{i+1}}  u \bigg)\bigg(\int_{x_{i}}^{b}  u \bigg)^{\frac{q}{p-q}} \sup_{N+1 \le m \le N+2} 2^{m\frac{q}{p-q}}  V_r(x_{m-1},x_{N+2})^{\frac{pq}{p-q}}.
\end{align*}
Observe that reindexing $k \to k+1$ in the first term, we get
\begin{align*}
C_{5,21}& \lesssim \sum_{k=N+1}^{\infty }  
\left(  \int_{x_{k}}^{x_{k+1}} u \right)\left(  \int_{x_{k}}^{b} u \right)^{\frac{q}{p-q}} \sup_{N+1 \le m \le k} 2^{m\frac{q}{p-q}}  V_r(x_{m-1},x_{k})^{\frac{pq}{p-q}} \\
&\quad  + \sum_{k=N+2}^{\infty }  
\left(  \int_{x_{k-1}}^{x_k} u \right)^{\frac{p}{p-q}}  2^{k\frac{q}{p-q}}  V_r(x_{k-1},x_{k-1}+)^{\frac{pq}{p-q}}\\
&\quad +\sum_{k=N+3}^{\infty }  
\left(  \int_{x_{k}}^b u \right)^{\frac{p}{p-q}} \Bigg[ \sup_{N+1 \le m \le k} 2^{m\frac{q}{p-q}}  V_r(x_{m-1},x_{k-1}+)^{\frac{pq}{p-q}} \\
& \hspace{5cm} -\sup_{N+1 \le m \le k-1} 2^{m\frac{q}{p-q}}  V_r(x_{m-1},x_{k-2}+)^{\frac{pq}{p-q}}\Bigg].
\end{align*}
Applying \cite[Lemma~4.4]{GPU-JFA} with $a_i = \int_{x_{i}}^{x_{i+1}}u$, $b_k=\sup_{N+1 \le m\le k} 2^{m\frac{q}{p-q}}  V_r(x_{m-1},x_{k-1}+)^{\frac{pq}{p-q}}$ and $s=\frac{q}{p-q}$ in the third term, we get that
\begin{align*}
C_{5,21}
& \lesssim \sum_{k=N+1}^{\infty }  
\left(  \int_{x_{k}}^{x_{k+1}} u \right)\left(  \int_{x_{k}}^{b} u \right)^{\frac{q}{p-q}} \sup_{N+1 \le m \le k} 2^{m\frac{q}{p-q}}  V_r(x_{m-1},x_{k})^{\frac{pq}{p-q}} \\
& \quad + \sum_{k=N+2}^{\infty }  
\bigg(\int_{x_{k-1}}^{x_k} \left(  \int_{x_{k-1}}^{x_k} u \right)^{\frac{q}{1-q}} u(t) dt \bigg)^{\frac{p(1-q)}{p-q}}  2^{k\frac{q}{p-q}}  V_r(x_{k-1},x_{k-1}+)^{\frac{pq}{p-q}}\\
& \lesssim A_3^{\frac{pq}{p-q}} + B_3^{\frac{pq}{p-q}}.
\end{align*}
Thus 
\begin{equation}\label{C5<A3+B3}
C_5 \lesssim A_3 + B_3, \quad q< \min\{p,1\}.
\end{equation}

It remains to prove that $A_3 + B_3 \lesssim C_4 + C_5$. 

Using \eqref{u-estimate} together with \eqref{antidisc_sup-esup}, we obtain when $q<p$,
\begin{align*}
A_3^{\frac{pq}{p-q}} &\lesssim \sum_{k=N+1}^{\infty} \bigg(\int_{x_k}^{x_{k+1}}\bigg(\int_{x}^b u\bigg)^{\frac{q}{p-q}} u(x)dx \bigg)  \esup_{t\in (a,x_k)}  \mathcal{W}(t)^{-\frac{q}{p-q}} V_r(t,x_k)^{\frac{pq}{p-q}} \\
&\leq \sum_{k=N+1}^{\infty} \int_{x_k}^{x_{k+1}}\bigg(\int_{x}^b u\bigg)^{\frac{q}{p-q}} u(x) \esup_{t\in(a, x)}  \mathcal{W}(t)^{-\frac{q}{p-q}} V_r(t,x)^{\frac{pq}{p-q}} dx \\
&\leq  C_4^{\frac{pq}{p-q}}.
\end{align*}

In addition, using the properties of the discretizing sequence,
\begin{align}
B_3^{\frac{pq}{p-q}} 
& \approx  \sum_{k=N+1}^{\infty} \bigg(\int_{x_{k}}^{x_{k+1}} w\bigg) \mathcal{W}(x_{k-1})^{-\frac{p}{p-q}} 
\bigg(\int_{x_{k-1}}^{x_k} \bigg( \int_t^{x_k} u \bigg)^{\frac{q}{1-q}} u(t) V_r(x_{k-1},t)^{\frac{q}{1-q}} dt \bigg)^{\frac{p(1-q)}{p-q}}\notag\\
& \leq    \sum_{k=N+1}^{\infty} \bigg(\int_{x_{k}}^{x_{k+1}} w\bigg) \esup_{y \in (a,x_k)} \mathcal{W}(y)^{-\frac{p}{p-q}}
\bigg(\int_{y}^{x_k} \bigg( \int_t^{x_k} u \bigg)^{\frac{q}{1-q}} u(t) V_r(y,t)^{\frac{q}{1-q}} dt \bigg)^{\frac{p(1-q)}{p-q}} \notag\\
& \leq  \sum_{k=N+1}^{\infty} \int_{x_{k}}^{x_{k+1}}  w(x)  \esup_{y \in (a, x)} \mathcal{W}(y)^{-\frac{p}{p-q}}
\bigg(\int_{y}^{x} \bigg( \int_t^{x} u \bigg)^{\frac{q}{1-q}} u(t) V_r(y,t)^{\frac{q}{1-q}} dt \bigg)^{\frac{p(1-q)}{p-q}} dx \notag\\
& \lesssim C_5^{\frac{pq}{p-q}} \label{B3<C5}
\end{align}
holds when $q<\min\{p,1\}$.

Consequently, we have
\begin{equation*}
A_3 + B_3 \lesssim C_4 + C_5, \quad q<\min\{p,1\},
\end{equation*}
which completes the proof in this case.

\medskip
\rm{(vi)} By Theorem~\ref{C:discrete solutions}, case (vi), we have that $C\approx A_4 + B_3$. We will prove that $ A_4+B_3  \approx C_1 + C_5 + C_6 $.

Let us start by finding an appropriate upper bound for $C_1$. Applying \eqref{GPU-6.3}, it is easy to see that $B_1\lesssim B_3$. On the other hand, in view of \eqref{new for A-1}, 
\begin{align*}
A_1 &= \sup_{N+1 \leq i} \bigg( \int_{x_i}^b u \bigg)^{\frac{1}{q}} \sup_{N+1 \leq k\leq  i} 2^{\frac{k}{p}} V_r(x_{k-1}, x_k) \\
& = \bigg(\sup_{N+1 \leq i} \bigg( \sum_{m=i}^{\infty} \int_{x_m}^{x_{m+1}} u \bigg)^{\frac{p}{p-q}} \sup_{N+1 \leq k\leq  i} 2^{k\frac{q}{p-q}} V_r(x_{k-1}, x_k)^{\frac{pq}{p-q}} \bigg)^{\frac{p-q}{pq}}.
\end{align*}
Since $q<p$, applying power rules, we get
\begin{align*}
A_1 & \approx \bigg(\sup_{N+1 \leq i}  \sum_{m=i}^{\infty} \bigg(\int_{x_m}^{x_{m+1}} u\bigg) \bigg(\int_{x_m}^{b} u\bigg)^{\frac{q}{p-q}} \sup_{N+1 \leq k\leq  i} 2^{k\frac{q}{p-q}} V_r(x_{k-1}, x_k)^{\frac{pq}{p-q}} \bigg)^{\frac{p-q}{pq}}\\
& \leq \bigg(\sup_{N+1 \leq i}  \sum_{m=i}^{\infty} \bigg(\int_{x_m}^{x_{m+1}} u\bigg) \bigg(\int_{x_m}^{b} u\bigg)^{\frac{q}{p-q}} \sup_{N+1 \leq k\leq  m} 2^{k\frac{q}{p-q}} V_r(x_{k-1}, x_k)^{\frac{pq}{p-q}} \bigg)^{\frac{p-q}{pq}}\\
& =A_3.
\end{align*}
Moreover, since $\max\{r,q\}<p$, it is clear that $A_3 \leq A_4$. Thus
\begin{equation}\label{A1<A4}
A_1 \lesssim A_4, \quad   \max\{r,q\}<p.
\end{equation}
Therefore, by \eqref{C1 A1B1} 
\begin{equation}  \label{C1 A4B3}
C_1\lesssim A_4 + B_3,\quad  \max\{r,q\}<p.
\end{equation}

Our second step is to find an upper bound for $C_5$. Using   \eqref{C5<A3+B3} and the fact that $A_3 \leq A_4$, we obtain that 
\begin{equation} \label{C5 A4B3}
   C_5 \lesssim   B_3+ A_3 \leq B_3 + A_4, \quad \max\{r,q\}<p, \, \, q<1.
\end{equation} 
Now, we shall find an upper estimate for $C_6$. Observe that
\begin{align}\label{C6-new}
C_6^{\frac{pq}{p-q}} &= \sum_{k=N+1}^{\infty} \int_{x_{k-1}}^{x_k} w(x) \esup_{y\in (a,x)} \mathcal{W}(y)^{-1} \bigg( \int_y^x \bigg(\int_{t}^b u\bigg)^{\frac{q}{p-q}} u(t)dt\bigg)
\notag\\
& \hskip+5cm \times 
\bigg(\int_{a}^{y} \mathcal{W}(t)^{-\frac{p}{p-r}} w(t)V_r(t,y)^{\frac{pr}{p-r}}dt\Bigg)^{\frac{q(p-r)}{r(p-q)}} dx \notag\\
& \lesssim   \sum_{k=N+1}^{\infty} 2^{-k} \esup_{y\in (a, x_k)} 
\mathcal{W}(y)^{-1} 
\bigg(\int_y^{x_k} \bigg(\int_{t}^b u\bigg)^{\frac{q}{p-q}} u(t)dt\bigg) \notag\\
& \hskip+5cm \times 
\bigg(\int_{a}^{y} \mathcal{W}(t)^{-\frac{p}{p-r}} w(t)V_r(t,y)^{\frac{pr}{p-r}}dt\Bigg)^{\frac{q(p-r)}{r(p-q)}}   \notag\\
& \approx \sum_{k=N+1}^{\infty} 2^{-k} \sup_{N+1\le j \le k} 2^{j}\esup_{y\in (x_{j-1}, x_j)} 
\bigg(\int_y^{x_k} \bigg(\int_{t}^b u\bigg)^{\frac{q}{p-q}} u(t)dt\bigg)  \notag\\
& \hskip+5cm \times 
\bigg(\int_{a}^{y} \mathcal{W}(t)^{-\frac{p}{p-r}} w(t)V_r(t,y)^{\frac{pr}{p-r}}dt\Bigg)^{\frac{q(p-r)}{r(p-q)}}.
\end{align}
Decomposing the integral $\int_y^{x_k}$ into sum $\int_y^{x_j} + \int_{x_j}^{x_k}$, we obtain
\begin{align}
C_6^{\frac{pq}{p-q}}&\lesssim \sum_{k=N+2}^{\infty} 2^{-k} \sup_{N+1\le j \le k-1} 2^{j} 
\bigg(\int_{x_j}^{x_k} \bigg(\int_{t}^b u\bigg)^{\frac{q}{p-q}} u(t)dt\bigg)  \notag\\
& \hskip+5cm \times 
\bigg(\int_{a}^{x_j} \mathcal{W}(t)^{-\frac{p}{p-r}} w(t)V_r(t,x_j)^{\frac{pr}{p-r}}dt\Bigg)^{\frac{q(p-r)}{r(p-q)}}  \notag \\
&  \quad + \sum_{k=N+1}^{\infty} 2^{-k} \sup_{N+1\le j \le k} 2^{j}\esup_{y\in (x_{j-1}, x_j)} \bigg(\int_y^{x_j} \bigg(\int_{t}^b u\bigg)^{\frac{q}{p-q}} u(t)dt\bigg) \notag \\
& \hskip+5cm \times 
\bigg(\int_{a}^{y} \mathcal{W}(t)^{-\frac{p}{p-r}} w(t)V_r(t,y)^{\frac{pr}{p-r}}dt\Bigg)^{\frac{q(p-r)}{r(p-q)}} \notag\\
& =: C_{6,1} +C_{6,2}.\label{C6<C61+C62}
\end{align}
Using \eqref{disc_int<sum}, we get 
\begin{align*}
C_{6,1}  &\lesssim   \sum_{k=N+2}^{\infty} 2^{-k} \sup_{N+1\le j \le k-1} 2^{j} 
\bigg(\int_{x_j}^{x_k} \bigg(\int_{t}^b u\bigg)^{\frac{q}{p-q}} u(t)dt\bigg)\\
& \hskip+5cm \times 
\bigg(\sum_{i=N+1}^{j}  2^{i\frac{r}{p-r}}V_r(x_{i-1},x_i)^{\frac{pr}{p-r}}\Bigg)^{\frac{q(p-r)}{q(p-q)}}\\
&=   \sum_{k=N+2}^{\infty} 2^{-k} \sup_{N+1\le j \le k-1} 2^{j} 
\bigg(\sum_{m=j}^{k-1}\int_{x_{m}}^{x_{m+1}} \bigg(\int_{t}^b u\bigg)^{\frac{q}{p-q}} u(t) dt\bigg)  \notag\\
& \hskip+5cm \times 
\bigg(\sum_{i=N+1}^{j}  2^{i\frac{r}{p-r}}V_r(x_{i-1},x_i)^{\frac{pr}{p-r}} \Bigg)^{\frac{q(p-r)}{r(p-q)}}.
\end{align*}
Then, using the fact that $\bigg\{ 2^{j}  \bigg(\sum_{i=N+1}^{j}  2^{i\frac{r}{p-r}}V_r(x_{i-1},x_i)^{\frac{pr}{p-r}}dt\Bigg)^{\frac{q(p-r)}{r(p-q)}} \bigg\}_{j={N+1}}^k$ is a strongly increasing sequence and applying first \eqref{inc.sup-sum}, then reindexing $k\to k+1$ and using \eqref{dec.sum-sup}, we obtain that
\begin{align}
C_{6,1}  &\lesssim \sum_{k= N+1 }^{\infty} 2^{-k} \sup_{N+1\le j \le k} 2^{j} \bigg( \int_{x_j}^{x_{j+1}} \bigg(\int_{t}^b u\bigg)^{\frac{q}{p-q}} u(t)dt\bigg)  \notag\\
& \hskip+5cm \times 
\bigg(\sum_{i=N+1}^{j}  2^{i\frac{r}{p-r}}V_r(x_{i-1},x_i)^{\frac{pr}{p-r}} \Bigg)^{\frac{q(p-r)}{r(p-q)}}  \notag\\
&\approx   \sum_{k=N+1}^{\infty} 
\bigg(\int_{x_k}^{x_{k+1}} \bigg(\int_{t}^b u\bigg)^{\frac{q}{p-q}} u(t)dt\bigg)\bigg(\sum_{i=N+1}^{k}  2^{i\frac{r}{p-r}}V_r(x_{i-1},x_i)^{\frac{pr}{p-r}} \Bigg)^{\frac{q(p-r)}{r(p-q)}}  \notag\\
& \lesssim A_4^{\frac{pq}{p-q}}.  \label{C61<A4}
\end{align}
Note that we have used \eqref{u-estimate} in the last inequality. 

Moreover, first using \eqref{dec.sum-sup}, then decomposing the integral $\int_a^y$ into the sum $\int_a^{x_{k-1}} + \int_{x_{k-1}}^y$, we have 
\begin{align*}
C_{6,2} &\approx  \sum_{k=N+1}^{\infty}  \esup_{y\in (x_{k-1}, x_k)} 
\bigg(\int_y^{x_k} \bigg(\int_{t}^b u\bigg)^{\frac{q}{p-q}} u(t)dt\bigg) 
\bigg(\int_{a}^{y} \mathcal{W}(t)^{-\frac{p}{p-r}} w(t)V_r(t,y)^{\frac{pr}{p-r}}dt\Bigg)^{\frac{q(p-r)}{r(p-q)}}  \notag\\
&\approx  \sum_{k=N+1}^{\infty}  \esup_{y\in (x_{k-1}, x_k)} 
\bigg(\int_y^{x_k} \bigg(\int_{t}^b u\bigg)^{\frac{q}{p-q}} u(t)dt\bigg)
\bigg(\int_{x_{k-1}}^{y} \mathcal{W}(t)^{-\frac{p}{p-r}} w(t)V_r(t,y)^{\frac{pr}{p-r}}dt\Bigg)^{\frac{q(p-r)}{r(p-q)}}  \notag\\
&\quad  + \sum_{k=N+2}^{\infty}  \esup_{y\in (x_{k-1}, x_k)} 
\bigg(\int_y^{x_k} \bigg(\int_{t}^b u\bigg)^{\frac{q}{p-q}} u(t)dt\bigg) \bigg(\int_{a}^{x_{k-1}} \mathcal{W}(t)^{-\frac{p}{p-r}} w(t) V_r(t,y)^{\frac{pr}{p-r}} dt\Bigg)^{\frac{q(p-r)}{r(p-q)}}.
\end{align*}
Since $V_r(t,y) \approx V_r(t,x_{k-1}) + V_r(x_{k-1},y)$ when $t<x_{k-1}<y$, we get
\begin{align*}
C_{6,2} &\approx \sum_{k=N+1}^{\infty}  \esup_{y\in (x_{k-1}, x_k)} 
\bigg(\int_y^{x_k} \bigg(\int_{t}^b u\bigg)^{\frac{q}{p-q}} u(t)dt\bigg)
\bigg(\int_{x_{k-1}}^{y} \mathcal{W}(t)^{-\frac{p}{p-r}} w(t)V_r(t,y)^{\frac{pr}{p-r}}dt\Bigg)^{\frac{q(p-r)}{r(p-q)}}  \notag\\
& \quad + \sum_{k=N+2}^{\infty}  \esup_{y\in (x_{k-1}, x_k)} 
\bigg(\int_y^{x_k} \bigg(\int_{t}^b u\bigg)^{\frac{q}{p-q}} u(t)dt\bigg) V_r(x_{k-1},y)^{\frac{pq}{p-q}} \notag\\
& \hskip+5cm \times 
\bigg(\int_{a}^{x_{k-1}} \mathcal{W}(t)^{-\frac{p}{p-r}} w(t)dt\Bigg)^{\frac{q(p-r)}{r(p-q)}}  \notag\\
&\quad  +  \sum_{k=N+2}^{\infty}   \bigg(\int_{x_{k-1}}^{x_k} \bigg(\int_{t}^b u\bigg)^{\frac{q}{p-q}} u(t)dt\bigg)\bigg(\int_{a}^{x_{k-1}} \mathcal{W}(t)^{-\frac{p}{p-r}} w(t)V_r(t,x_{k-1})^{\frac{pr}{p-r}}dt\Bigg)^{\frac{q(p-r)}{r(p-q)}}.  
\end{align*}
Next, monotonicity of $V_r$ together with \eqref{2k equiv-1} and \eqref{2k equiv 2} yields
\begin{align}
C_{6,2} &\lesssim \sum_{k=N+1}^{\infty}  2^{k\frac{q}{p-q}} \esup_{y\in (x_{k-1}, x_k)} 
\bigg(\int_y^{x_k} \bigg(\int_{t}^b u\bigg)^{\frac{q}{p-q}} u(t)dt\bigg) V_r(x_{k-1},y)^{\frac{pq}{p-q}}  \notag\\
& \quad  + \sum_{k=N+2}^{\infty}   \bigg(\int_{x_{k-1}}^{x_k} \bigg(\int_{t}^b u\bigg)^{\frac{q}{p-q}} u(t)dt\bigg)\bigg(\int_{a}^{x_{k-1}} \mathcal{W}(t)^{-\frac{p}{p-r}} w(t)V_r(t,x_{k-1})^{\frac{pr}{p-r}}dt\Bigg)^{\frac{q(p-r)}{r(p-q)}}  \notag\\
& := C_{6,21}+C_{6,22}. \label{C62<C621+C622}
\end{align}
It is easy to see that  \eqref{u-estimate} and  \eqref{disc_int<sum} yields
\begin{equation} \label{C622<A4}
C_{6,22} \lesssim A_4^{\frac{pq}{p-q}},
\quad \max\{r,q\}<p.
\end{equation}

Let us proceed with $C_{6.21}$.  
Let $y_k \in [x_{k-1}, x_{k}]$, $k \ge N+1$ be such that if $N > -\infty$ then $y_{N} : =a$, otherwise  $y_{-\infty}:= \lim_{k\rightarrow -\infty} y_k = a$ and
\begin{align}\label{sup-yk}
&\esup_{y \in (x_{k-1}, x_{k})} \bigg(\int_y^{x_{k}} \bigg(\int_t^{b} u \bigg)^{\frac{q}{p-q}} u(t) dt\bigg)  V_r(x_{k-1},y)^{\frac{pq}{p-q}} \notag \\
& \hspace{3cm}\lesssim
\bigg(\int_{y_k}^{x_{k}} \bigg(\int_t^{b} u \bigg)^{\frac{q}{p-q}}  u(t) dt\bigg) V_r(x_{k-1},y_k)^{\frac{pq}{p-q}}.
\end{align}
Observe that
$$
2^{-k} \approx \mathcal{W}(x_k) \leq \mathcal{W}(y_k) \leq \mathcal{W}(x_{k-1}) \approx 2^{-k+1} \quad\text{for $k \geq N+1$.}
$$
Thus, $\{y_k\}_{k=N+1}^\infty$ is also a discretizing sequence of $\mathcal{W}$. 

Now, using \eqref{sup-yk}, we get that
\begin{align*}
C_{6,21} & \lesssim  \sum_{k=N+1}^\infty 2^{k\frac{q}{p-q}} \bigg(\int_{y_k}^{x_{k}} \bigg(\int_t^{b} u \bigg)^{\frac{q}{p-q}} 
u(t) dt\bigg) V_r(x_{k-1},y_k)^{\frac{pq}{p-q}}  \\
& \leq \sum_{k=N+1}^\infty  2^{k\frac{q}{p-q}} \bigg(\int_{y_k}^{y_{k+1}} \bigg(\int_t^{b} u \bigg)^{\frac{q}{p-q}} u(t) dt\bigg) V_r(x_{k-1},y_k)^{\frac{pq}{p-q}} \\
& \leq \sum_{k=N+1}^\infty  \bigg(\int_{y_k}^{y_{k+1}} \bigg(\int_t^{b} u \bigg)^{\frac{q}{p-q}} u(t) dt\bigg) 
\bigg(\sum_{i=N+1}^k 2^{i\frac{r}{p-r}} V_r(x_{i-1},y_k)^{\frac{pr}{p-r}}\bigg)^{\frac{q(p-r)}{r(p-q)}}\\
& \leq \sum_{k=N+1}^\infty  \bigg(\int_{y_k}^{y_{k+1}} \bigg(\int_t^{b} u \bigg)^{\frac{q}{p-q}} u(t) dt\bigg) 
\bigg(\sum_{i=N+1}^k 2^{i\frac{r}{p-r}} V_r(y_{i-1},y_k)^{\frac{pr}{p-r}}\bigg)^{\frac{q(p-r)}{r(p-q)}}.
\end{align*}
Applying \eqref{middle} and \eqref{u-estimate} we get that
\begin{align}\label{C-6,21-yk-estimate}
C_{6,21} & \lesssim  \sum_{k=N+1}^{\infty} \bigg(\int_{y_k}^{y_{k+1}} u \bigg)\bigg(\int_{y_k}^{b} u \bigg)^{\frac{q}{p-q}}  \bigg(\sum_{i=N+1}^k 2^{i\frac{r}{p-r}} V_r(y_{i-1},y_i)^{\frac{pr}{p-r}}\bigg)^{\frac{q(p-r)}{r(p-q)}}.
\end{align}
Since $\{y_k\}_{k=N}^{\infty}$ is a discretizing sequence of $\mathcal{W}$, we obtain
\begin{equation}\label{C621<A4}
C_{6,21} \lesssim A_4^{\frac{pq}{p-q}} + B_3^{\frac{pq}{p-q}},\quad \max\{r,q\}<p.
\end{equation}
Therefore, in view of \eqref{C6<C61+C62}, \eqref{C61<A4}, \eqref{C62<C621+C622}, \eqref{C621<A4} and \eqref{C622<A4}, we arrive at
\begin{equation}\label{C6 A4}
    C_6 \lesssim A_4 + B_3, \quad \max\{r,q\}<p.
\end{equation}

Consequently, using \eqref{C1 A4B3}, \eqref{C5 A4B3} and \eqref{C6 A4}, we obtain that
\begin{equation}\label{C5+C6<B3+A4}
C_1 + C_5 + C_6  \lesssim B_3 + A_4, \quad \max\{r,q\}<p, \, \, q<1.
\end{equation}

It remains to show that $B_3 + A_4 \lesssim C_1 + C_5 + C_6$. 

Using \eqref{u-estimate}, we get
\begin{align*}
A_4^{\frac{pq}{p-q}} &\lesssim \sum_{k= N+1}^{\infty} \bigg(\int_{x_{k}}^{x_{k+1}}  \bigg(\int_{t}^{b} u \bigg)^{\frac{q}{p-q}} u(t) dt\bigg) \bigg(\sum_{i=N+1}^k 2^{i\frac{r}{p-r}} V_r(x_{i-1},x_i)^{\frac{pr}{p-r}}\bigg)^{\frac{q(p-r)}{r(p-q)}} \\
&\approx \bigg(\int_{x_{N+1}}^{b}  \bigg(\int_{t}^{b} u \bigg)^{\frac{q}{p-q}} u(t) dt\bigg) 2^{N\frac{q}{p-q}} V_r(x_N,x_{N+1})^{\frac{pq}{p-q}}\\
&\quad + \sum_{k= N+2}^{\infty} \bigg(\int_{x_{k}}^{x_{k+1}}  \bigg(\int_{t}^{b} u \bigg)^{\frac{q}{p-q}} u(t) dt\bigg) \bigg(\sum_{i=N+2}^k 2^{i\frac{r}{p-r}} V_r(x_{i-1},x_i)^{\frac{pr}{p-r}}\bigg)^{\frac{q(p-r)}{r(p-q)}}.
\end{align*}
Since $q<p$ and $r<p$, applying \eqref{disc_sum<int},
\begin{align}
A_4^{\frac{pq}{p-q}} & \lesssim \bigg(\int_{x_{N+1}}^{b}   u\bigg)^{\frac{p}{p-q}} 2^{N\frac{q}{p-q}} V_r(x_N,x_{N+1})^{\frac{pq}{p-q}} \notag\\
&\quad  + \sum_{k= N+2}^{\infty} \bigg(\int_{x_{k}}^{x_{k+1}}  \bigg(\int_{t}^{b} u \bigg)^{\frac{q}{p-q}} u(t) dt\bigg) \bigg(\int_a^{x_{k-1}} \mathcal{W}(t)^{-\frac{p}{p-r}} w(t) V_r(t,x_k)^{\frac{pr}{p-r}} dt\bigg)^{\frac{q(p-r)}{r(p-q)}} \notag\\
& =: A_{4,1} + A_{4,2}. \label{A4<A41+A42}
\end{align}
Let us start with the $A_{4,1}$ estimate. In view of \eqref{C1 A1B1}, it is easy to see that,
\begin{equation}\label{A41 C1}
A_{4,1} \lesssim A_1^{\frac{pq}{p-q}} \lesssim C_1^{\frac{pq}{p-q}}.
\end{equation}

Let us continue with $A_{4,2}$. 
\begin{align}
A_{4,2} & \leq \sum_{k= N+2}^{\infty} \bigg(\int_{x_{k}}^{x_{k+1}}  \bigg(\int_{t}^{b} u \bigg)^{\frac{q}{p-q}} u(t) dt\bigg) \bigg(\int_a^{x_{k}} \mathcal{W}(t)^{-\frac{p}{p-r}} w(t) V_r(t,x_k)^{\frac{pr}{p-r}} dt\bigg)^{\frac{q(p-r)}{r(p-q)}} \notag \\
&\lesssim \sum_{k=N+2}^{\infty} \bigg(
\int_{x_{k+1}}^{x_{k+2}} w\bigg) \esup_{y\in (a,x_k)} \mathcal{W}(y)^{-1} \bigg(\int_y^{x_{k+1}}  \bigg(\int_{t}^{b} u \bigg)^{\frac{q}{p-q}} u(t) dt\bigg) \notag\\
& \hskip+4cm \times \bigg(\int_a^y \mathcal{W}(t)^{-\frac{p}{p-r}} w(t) V_r(t,y)^{\frac{pr}{p-r}} dt\bigg)^{\frac{q(p-r)}{r(p-q)}}\notag \\
&\lesssim \sum_{k=N+2}^{\infty} 
\int_{x_{k+1}}^{x_{k+2}} w(x)  \esup_{y\in (a,x)} \mathcal{W}(y)^{-1} \bigg(\int_y^{x}  \bigg(\int_{t}^{b} u \bigg)^{\frac{q}{p-q}} u(t) dt\bigg) \notag\\
& \hskip+4cm \times \bigg(\int_a^y \mathcal{W}(t)^{-\frac{p}{p-r}} w(t) V_r(t,y)^{\frac{pr}{p-r}} dt\bigg)^{\frac{q(p-r)}{r(p-q)}} dx \notag \\
& \lesssim C_6^{\frac{pq}{p-q}}. \label{A42<C6}
\end{align}
Thus, by using \eqref{A41 C1} and \eqref{A42<C6} together with  \eqref{A4<A41+A42}, we get 
\begin{equation}\label{A4<C1C6}
A_4 \lesssim C_1 + C_6,  \quad  \max\{q,r\}<p.
\end{equation}
Therefore \eqref{A4<C1C6} together with \eqref{B3<C5} yields $B_3 + A_4 \lesssim C_1 + C_5 + C_6$, establishing the result in this case.
\medskip

\rm{(vii)} By Theorem~\ref{C:discrete solutions}, case (vii), we have that $C\approx A_4 + B_4$. We will prove that $ A_4+B_4  \approx C_1 + C_6 + C_7$.

Let us start with the estimate 
\begin{equation}\label{C1+C6+C7 A4B4}
    C_1+C_6 + C_7 \lesssim A_4 + B_4.
\end{equation} 
It is easy to see that $B_1\lesssim B_4$. Therefore it follows from  \eqref{C1 A1B1} together with \eqref{A1<A4} that
\begin{equation}\label{C1 A4B4}
    C_1\lesssim A_4+ B_4,  \quad \max\{r,q\}<p.
\end{equation}
On the other hand using \eqref{C6<C61+C62}, \eqref{C61<A4}, \eqref{C62<C621+C622}, \eqref{C622<A4}, we have $C_6^{\frac{pq}{p-q}} \lesssim C_{6,21}+ A_4^{\frac{pq}{p-q}}$. Choosing the sequence $\{y_k\}_{k=N+1}^\infty$ as in case (vi), we have \eqref{C-6,21-yk-estimate} in this case as well. Thus, since $\{y_k\}_{k=N+1}^\infty$ is also a discretizing sequence $C_{6,21} \lesssim A_4^{\frac{pq}{p-q}} + B_4^{\frac{pq}{p-q}}$ holds. Therefore, 
\begin{equation}\label{C6 < A4+B4 (vii)}
C_6 \lesssim A_4+ B_4, \quad \max\{r,q\}<p.    
\end{equation}
Finally, since $q<p$, using the properties of the discretizing sequence, we have
\begin{align*}
C_7^{\frac{pq}{p-q}} &= \sum_{k= N+1}^\infty \int_{x_{k-1}}^{x_k} w(x)\esup_{y\in (a, x)} \mathcal{W}(y)^{-\frac{p}{p-q}} \esup_{t \in (y, x)} \bigg( \int_t^{x} u \bigg)^{\frac{p}{p-q}} V_r(y, t)^{\frac{pq}{p-q}} dx \\
&\lesssim \sum_{k= N+1}^\infty 2^{-k} \esup_{y\in (a, x_{k})} \mathcal{W}(y)^{-\frac{p}{p-q}} \esup_{t\in (y, x_k)} \bigg( \int_t^{x_k} u \bigg)^{\frac{p}{p-q}} V_r(y, t)^{\frac{pq}{p-q}} \\ 
& \approx \sum_{k= N+2}^\infty 2^{-k} \sup_{N+1 \leq i \leq k-1} \esup_{y\in (x_{i-1}, x_{i})} \mathcal{W}(y)^{-\frac{p}{p-q}} \esup_{t\in (y, x_k)} \bigg( \int_t^{x_k} u \bigg)^{\frac{p}{p-q}} V_r(y, t)^{\frac{pq}{p-q}} \\
& \quad + \sum_{k= N+1}^\infty 2^{-k}  \esup_{y\in (x_{k-1}, x_{k})} \mathcal{W}(y)^{-\frac{p}{p-q}} \esup_{t\in (y, x_k)} \bigg( \int_t^{x_k} u \bigg)^{\frac{p}{p-q}} V_r(y, t)^{\frac{pq}{p-q}}.
\end{align*}
Decomposing the supremum we get
\begin{align*}
C_7^{\frac{pq}{p-q}}
& \lesssim \sum_{k= N+2}^\infty 2^{-k} \sup_{N+1 \leq i \leq k-1} \esup_{y\in (x_{i-1}, x_{i})} \mathcal{W}(y)^{-\frac{p}{p-q}} \esup_{t\in (y, x_i)} \bigg( \int_t^{x_k} u \bigg)^{\frac{p}{p-q}} V_r(y, t)^{\frac{pq}{p-q}} \\
& \quad + \sum_{k= N+2}^\infty 2^{-k} \sup_{N+1 \leq i \leq k-1} \esup_{y\in (x_{i-1}, x_{i})} \mathcal{W}(y)^{-\frac{p}{p-q}} \esup_{t\in (x_i, x_k)} \bigg( \int_t^{x_k} u \bigg)^{\frac{p}{p-q}} V_r(y, t)^{\frac{pq}{p-q}} \\
& \quad + \sum_{k= N+1}^\infty 2^{-k}  \esup_{y\in (x_{k-1}, x_{k})} \mathcal{W}(y)^{-\frac{p}{p-q}} \esup_{t\in (y, x_k)} \bigg( \int_t^{x_k} u \bigg)^{\frac{p}{p-q}} V_r(y, t)^{\frac{pq}{p-q}}.
\end{align*}
As $\mathcal{W}(y) \approx 2^{-i}$ for $y\in (x_{i-1}, x_{i})$,
\begin{align*}
C_7^{\frac{pq}{p-q}} & \lesssim   \sum_{k= N+2}^\infty 2^{-k} \sup_{N+1 \leq i \leq k-1} 2^{i\frac{p}{p-q}} 
 \esup_{t\in (x_{i-1}, x_{i})} \bigg( \int_t^{x_k} u \bigg)^{\frac{p}{p-q}} V_r(x_{i-1}, t)^{\frac{pq}{p-q}} \\
& \quad + \sum_{k= N+2}^\infty 2^{-k} \sup_{N+1 \leq i \leq k-1}  2^{i\frac{p}{p-q}}  \esup_{t\in (x_i, x_k)} \bigg( \int_t^{x_k} u \bigg)^{\frac{p}{p-q}} V_r(x_{i-1}, t)^{\frac{pq}{p-q}} \\
& \quad + \sum_{k= N+1}^\infty 2^{k\frac{q}{p-q}} \esup_{t\in ({x_{k-1}}, x_k)} \bigg( \int_t^{x_k} u \bigg)^{\frac{p}{p-q}} V_r(x_{k-1}, t)^{\frac{pq}{p-q}}.  
\end{align*}
Since $\int_t^{x_k} u = \int_t^{x_i} u + \int_{x_i}^{x_k} u$ for $t\in(x_{i-1}, x_i)$, $i<k$, $ k\ge N+2$, we have
\begin{align*}
C_7^{\frac{pq}{p-q}} & \lesssim   \sum_{k= N+2}^\infty 2^{-k} \sup_{N+1 \leq i \leq k-1} 2^{i\frac{p}{p-q}} 
 \esup_{t\in (x_{i-1}, x_{i})} \bigg( \int_t^{x_i} u \bigg)^{\frac{p}{p-q}} V_r(x_{i-1}, t)^{\frac{pq}{p-q}} \\
 & \quad + \sum_{k= N+2}^\infty 2^{-k} \sup_{N+1 \leq i \leq k-1} 2^{i\frac{p}{p-q}} \bigg( \int_{x_i}^{x_k} u \bigg)^{\frac{p}{p-q}} V_r(x_{i-1}, x_i)^{\frac{pq}{p-q}}\\
& \quad + \sum_{k= N+2}^\infty 2^{-k} \sup_{N+1 \leq i \leq k-1}  2^{i\frac{p}{p-q}}  \esup_{t\in (x_i, x_k)} \bigg( \int_t^{x_k} u \bigg)^{\frac{p}{p-q}} V_r(x_{i-1}, t)^{\frac{pq}{p-q}} \\
& \quad + \sum_{k= N+1}^\infty 2^{k\frac{q}{p-q}} \esup_{t\in ({x_{k-1}}, x_k)} \bigg( \int_t^{x_k} u \bigg)^{\frac{p}{p-q}} V_r(x_{k-1}, t)^{\frac{pq}{p-q}} \\
& \lesssim \sum_{k= N+1}^\infty 2^{-k} \sup_{N+1 \leq i \leq k} 2^{i\frac{p}{p-q}}  \esup_{t\in (x_{i-1}, x_{i})} \bigg( \int_t^{x_i} u \bigg)^{\frac{p}{p-q}} V_r(x_{i-1}, t)^{\frac{pq}{p-q}} \\
 & \quad + \sum_{k= N+2}^\infty 2^{-k} \sup_{N+1 \leq i \leq k-1} 2^{i\frac{p}{p-q}} \bigg( \int_{x_i}^{x_k} u \bigg)^{\frac{p}{p-q}} V_r(x_{i-1}, x_i)^{\frac{pq}{p-q}}\\
& \quad + \sum_{k= N+2}^\infty 2^{-k} \sup_{N+1 \leq i \leq k-1}  2^{i\frac{p}{p-q}}  \esup_{t\in (x_i, x_k)} \bigg( \int_t^{x_k} u \bigg)^{\frac{p}{p-q}} V_r(x_{i-1}, t)^{\frac{pq}{p-q}}.
\end{align*}
Since $V_r(x_{i-1},t) \approx V_r(x_{i-1}, x_{i}) + V_r(x_{i}, t)$ for $t> x_i$, we get
\begin{align}\label{C7-C71+C72+C73}
C_7^{\frac{pq}{p-q}} & \lesssim \sum_{k= N+1}^\infty 2^{-k} \sup_{N+1 \leq i \leq k} 2^{i\frac{p}{p-q}}  \esup_{t\in (x_{i-1}, x_{i})} \bigg( \int_t^{x_i} u \bigg)^{\frac{p}{p-q}} V_r(x_{i-1}, t)^{\frac{pq}{p-q}} \notag\\
 & \quad + \sum_{k= N+2}^\infty 2^{-k} \sup_{N+1 \leq i \leq k-1} 2^{i\frac{p}{p-q}} \bigg( \int_{x_i}^{x_k} u \bigg)^{\frac{p}{p-q}} V_r(x_{i-1}, x_i)^{\frac{pq}{p-q}} \notag\\
& \quad + \sum_{k= N+2}^\infty 2^{-k} \sup_{N+1 \leq i \leq k-1}  2^{i\frac{p}{p-q}}  \esup_{t\in (x_i, x_k)} \bigg( \int_t^{x_k} u \bigg)^{\frac{p}{p-q}} V_r(x_{i}, t)^{\frac{pq}{p-q}} \notag\\ 
&:= C_{7,1} + C_{7,2} + C_{7,3}.  
\end{align}
First, applying \eqref{dec.sum-sup} we get
\begin{equation} \label{C71-B4} 
C_{7,1} \approx \sum_{k= N+1}^\infty  2^{k\frac{q}{p-q}} \esup_{t\in (x_{k-1}, x_k)} \bigg( \int_t^{x_k} u \bigg)^{\frac{p}{p-q}} V_r(x_{k-1}, t)^{\frac{pq}{p-q}}=  B_4^{\frac{pq}{p-q}}. 
\end{equation}
Next, applying power rules, observe that
\begin{align*}
C_{7,2} & \leq \sum_{k= N+2}^\infty 2^{-k} \sup_{N+1 \leq i \leq k-1} \bigg( \sum_{m=i}^{k-1} \int_{x_m}^{x_{m+1}} u \bigg)^{\frac{p}{p-q}} \sup_{N+1\le j\le i} 2^{j\frac{p}{p-q}}V_r(x_{j-1}, x_j)^{\frac{pq}{p-q}}\\
& \approx \sum_{k= N+2}^\infty 2^{-k} \sup_{N+1 \leq i \leq k-1} \sum_{m=i}^{k-1} \bigg(  \int_{x_m}^{x_{m+1}} u \bigg) \bigg(  \int_{x_m}^{x_{k}} u \bigg)^{\frac{q}{p-q}} \sup_{N+1\le j\le i} 2^{j\frac{p}{p-q}}V_r(x_{j-1}, x_j)^{\frac{pq}{p-q}} \\
& \leq \sum_{k= N+2}^\infty 2^{-k} \sum_{m=N+1}^{k-1} \bigg(  \int_{x_m}^{x_{m+1}} u \bigg) \bigg(  \int_{x_m}^{b} u \bigg)^{\frac{q}{p-q}} \sup_{N+1\le j\le m} 2^{j\frac{p}{p-q}} V_r(x_{j-1}, x_j)^{\frac{pq}{p-q}}.
\end{align*}
Now, applying \eqref{dec.sum-sum}, we obtain that
\begin{align}\label{C72<A4}
C_{7,2} & \lesssim \sum_{k= N+1}^\infty 2^{-k} \bigg(  \int_{x_k}^{x_{k+1}} u \bigg) \bigg(  \int_{x_k}^{b} u \bigg)^{\frac{q}{p-q}} \sup_{N+1\le j\le k} 2^{j\frac{p}{p-q}} V_r(x_{j-1}, x_j)^{\frac{pq}{p-q}}\notag\\
& \leq \sum_{k= N+1}^\infty \bigg(  \int_{x_k}^{x_{k+1}} u \bigg) \bigg(  \int_{x_k}^{b} u \bigg)^{\frac{q}{p-q}} \sup_{N+1\le j\le k} 2^{j\frac{q}{p-q}} V_r(x_{j-1}, x_j)^{\frac{pq}{p-q}} \notag \\
& \leq A_4^{\frac{pq}{p-q}}. 
\end{align}
Finally, 
\begin{align*}
C_{7,3} &= 
\sum_{k= N+2}^\infty 2^{-k} \sup_{N+1 \leq i \leq k-1}  2^{i\frac{p}{p-q}}  \sup_{i \leq j \le k-1} \esup_{t\in (x_{j}, x_{j+1})} \bigg( \int_t^{x_k} u \bigg)^{\frac{p}{p-q}} V_r(x_{i}, t)^{\frac{pq}{p-q}}.
\end{align*}
First decomposing the integral $\int_t^{x_k}$ into sum $\int_t^{x_{j+1}} + \int_{x_{j+1}}^{x_k}$ for $j<k$, and then decomposing $V_r$, we have
\begin{align}\label{C73-C731+C732}
C_{7,3}
& \approx \sum_{k= N+2}^\infty 2^{-k} \sup_{N+1 \leq i \leq k-1}  2^{i\frac{p}{p-q}}  \sup_{i \leq j \le k-1} \esup_{t\in (x_{j}, x_{j+1})} \bigg( \int_t^{x_{j+1}} u \bigg)^{\frac{p}{p-q}} V_r(x_{i}, t)^{\frac{pq}{p-q}} \notag\\
& \quad +\sum_{k= N+2}^\infty 2^{-k} \sup_{N+1 \leq i \leq k-1}  2^{i\frac{p}{p-q}}  \sup_{i \leq j < k-1} \bigg( \int_{x_{j+1}}^{x_k} u \bigg)^{\frac{p}{p-q}} V_r(x_{i}, x_{j+1})^{\frac{pq}{p-q}} \notag \\
& \approx \sum_{k= N+2}^\infty 2^{-k} \sup_{N+1 \leq i \leq k-1}  2^{i\frac{p}{p-q}}  \sup_{i \le j \le k-1} \esup_{t\in (x_{j}, x_{j+1})} \bigg( \int_t^{x_{j+1}} u \bigg)^{\frac{p}{p-q}} V_r(x_{j}, t)^{\frac{pq}{p-q}} \notag \\
& \quad +\sum_{k= N+2}^\infty 2^{-k} \sup_{N+1 \leq i \leq k-1}  2^{i\frac{p}{p-q}}  \sup_{i < j \le k-1} \bigg( \int_{x_j}^{x_{j+1}} u \bigg)^{\frac{p}{p-q}} V_r(x_{i}, x_j)^{\frac{pq}{p-q}} \notag \\
& \quad +\sum_{k= N+2}^\infty 2^{-k} \sup_{N+1 \leq i \leq k-1}  2^{i\frac{p}{p-q}}  \sup_{i \leq j < k-1} \bigg( \int_{x_{j+1}}^{x_k} u \bigg)^{\frac{p}{p-q}} V_r(x_{i}, x_{j+1})^{\frac{pq}{p-q}} \notag \\
& \leq \sum_{k= N+2}^\infty 2^{-k} \sup_{N+1 \le j \le k-1} 2^{j\frac{p}{p-q}}   \esup_{t\in (x_{j}, x_{j+1})} \bigg( \int_t^{x_{j+1}} u \bigg)^{\frac{p}{p-q}} V_r(x_{j}, t)^{\frac{pq}{p-q}} \notag \\
& \quad +\sum_{k= N+2}^\infty 2^{-k} \sup_{N+1 \leq i \leq k-1}  2^{i\frac{p}{p-q}}  \sup_{i \leq j < k-1} \bigg( \int_{x_{j+1}}^{x_k} u \bigg)^{\frac{p}{p-q}} V_r(x_{i}, x_{j+1})^{\frac{pq}{p-q}} \notag \\
&= C_{7,31} + C_{7,32}.
\end{align}
Analogous to the estimation of $C_{7,1}$ we have 
\begin{equation}\label{C731-B4}
C_{7,31} \lesssim B_4^{\frac{pq}{p-q}}.
\end{equation}
On the other hand,
\begin{align*}
C_{7,32} 
&\leq \sum_{k= N+2}^\infty 2^{-k}  \sup_{N+1 \leq j <  k-1} \bigg( \int_{x_{j+1}}^{x_k} u \bigg)^{\frac{p}{p-q}} \sup_{N+1 \le m\le j }2^{m\frac{p}{p-q}} V_r(x_{m}, x_{j+1})^{\frac{pq}{p-q}} \\
& = \sum_{k= N+2}^\infty 2^{-k}  \sup_{N+1 \leq j <  k-1} \bigg( \int_{x_{j+1}}^{x_k} u \bigg)^{\frac{p}{p-q}} \sup_{N+2 \le m\le j+1 }2^{m\frac{p}{p-q}} V_r(x_{m-1}, x_{j+1})^{\frac{pq}{p-q}}
\end{align*}
holds. Then, in view of \eqref{xi-xk}, we have
\begin{align*}
C_{7,32}
&\lesssim \sum_{k= N+2}^\infty 2^{-k}  \sup_{N+1 \leq j < k-1} \bigg( \int_{x_{j+1}}^{x_k} u \bigg)^{\frac{p}{p-q}} \sup_{N+2 \le m\le j+1 }2^{m\frac{p}{p-q}} V_r(x_{m-1}, x_{m})^{\frac{pq}{p-q}}.
\end{align*}
Thus, analogously to the estimate of $C_{7,2}$ we have 
\begin{equation}\label{C732-A4}
C_{7,32} \lesssim A_4^{\frac{pq}{p-q}}.
\end{equation}
Consequently, combining \eqref{C7-C71+C72+C73}, \eqref{C71-B4}, \eqref{C72<A4}, \eqref{C73-C731+C732}, \eqref{C731-B4} and \eqref{C732-A4}, we arrive at 
\begin{equation} \label{C7 A4B4}
   C_7 \lesssim  B_4 + A_4. 
\end{equation} 
Therefore, \eqref{C1 A4B4}, \eqref{C6 < A4+B4 (vii)} and \eqref{C7 A4B4} results in \eqref{C1+C6+C7 A4B4}. 

It remains to show that $B_4 + A_4 \lesssim C_1+C_6 + C_7$. By \eqref{A4<C1C6}, we have $A_4 \lesssim C_1 + C_6$ , when $\max\{q,r\}<p$. On the other hand, it is easy to see that,
\begin{align*}
B_4^{\frac{pq}{p-q}} 
& \lesssim \sum_{k=N+1}^{\infty} 2^{-k} \esup_{y \in (x_{k-1}, x_k)} \mathcal{W}(y)^{-\frac{p}{p-q}} \esup_{t \in (y, x_{k})} \bigg(\int_t^{x_{k}} u \bigg)^{\frac{p}{p-q}} V_r(y,t)^{\frac{pq}{p-q}} \notag \\
& \lesssim \sum_{k=N+1}^{\infty} \int_{x_{k}}^{x_{k+1}} w(x)\,dx \esup_{y \in (a,x_{k})} \mathcal{W}(y)^{-\frac{p}{p-q}}
\esup_{t\in(y,x_{k})} \bigg( \int_t^{x_k} u \bigg)^{\frac{p}{p-q}} V_r(y,t)^{\frac{pq}{p-q}}  \notag\\
& \lesssim  \sum_{k=N+1}^{\infty} \int_{x_{k}}^{x_{k+1}}  w(x) \esup_{y \in (a, x)} \mathcal{W}(y)^{-\frac{p}{p-q}}
\esup_{t\in (y,x)} \bigg( \int_t^{x} u \bigg)^{\frac{p}{p-q}}  V_r(y,t)^{\frac{pq}{p-q}}\,dx \notag \\
& \leq C_7^{\frac{pq}{p-q}}. \label{B4<C7}
\end{align*}

Consequently, we have
\begin{equation}
A_4+ B_4 \lesssim C_1 + C_6 + C_7,  \quad \max\{q,r\}<p
\end{equation}
which completes the proof in this case.
\end{proof}

\begin{proof}[Proof of Theorem~\ref{T:equiv.solut.}] 
By Theorem~\ref{C:discrete solutions}, case (vi), we have $C\approx A_4 + B_3$. Observe that in addition to the positions of parameters in Theorem~\ref{C:discrete solutions}, case (vi), we now also have the restriction $r\leq q$. We will prove that $ A_4+B_3  \approx \mathcal{C}_5 + \mathcal{C}_6$. We know from \eqref{B3<C5} and \eqref{A4<C1C6} that $A_4 + B_3 \lesssim C_1 + C_5 + C_6$. 

Conversely, in view of \eqref{GPU-6.3}, it is clear that
\begin{align*}
C_1^{\frac{pq}{p-q}} & \leq  \esup_{x\in (a,b)} \mathcal{W}(x)^{-\frac{q}{p-q}} \bigg(\int_x^b \bigg( \int_t^b u \bigg)^{\frac{q}{1-q}} u(t)  V_r(x,t)^{\frac{q}{1-q}} dt \bigg)^{\frac{p(1-q)}{p-q}}\\
& \leq  \esup_{x\in (a,b)} \bigg(\mathcal{W}(x)^{-\frac{q}{p-q}} -\mathcal{W}(a)^{-\frac{q}{p-q}}\bigg) \bigg(\int_x^b \bigg( \int_t^b u \bigg)^{\frac{q}{1-q}} u(t)  V_r(x,t)^{\frac{q}{1-q}} dt \bigg)^{\frac{p(1-q)}{p-q}}\\
& +\mathcal{W}(a)^{-\frac{q}{p-q}}\bigg(\int_a^b \bigg( \int_t^b u \bigg)^{\frac{q}{1-q}} u(t)  V_r(a,t)^{\frac{q}{1-q}} dt \bigg)^{\frac{p(1-q)}{p-q}}\\
& \leq  \esup_{x\in (a,b)} \bigg(\int_a^x \mathcal{W}(y)^{-\frac{p}{p-q}} w(y)\,dy \bigg)\bigg(\int_x^b \bigg( \int_t^b u \bigg)^{\frac{q}{1-q}} u(t)  V_r(x,t)^{\frac{q}{1-q}} dt \bigg)^{\frac{p(1-q)}{p-q}}\\
& +\mathcal{W}(a)^{-\frac{q}{p-q}}\bigg(\int_a^b \bigg( \int_t^b u \bigg)^{\frac{q}{1-q}} u(t)  V_r(a,t)^{\frac{q}{1-q}} dt \bigg)^{\frac{p(1-q)}{p-q}}\\
& \leq \mathcal{C}_5^{\frac{pq}{p-q}}.
\end{align*}
Since $q<p<1$ and $V_r$ is non-increasing in the first variable, it is
easy to see that 
\begin{align*}
C_5^{\frac{pq}{p-q}} & =  \int_a^b  w(x) \sup_{y\in (a,x)} \mathcal{W}(y)^{-\frac{p}{p-q}}\bigg( \int_{y}^x \bigg( \int_t^x u \bigg)^{\frac{q}{1-q}} u(t) V_r(y,t)^{\frac{q}{1-q}} dt \bigg)^{\frac{p(1-q)}{p-q}} dx \\
& \leq \int_a^b  w(x) \sup_{y\in (a,x)} \mathcal{W}(y)^{-\frac{p}{p-q}}\bigg( \int_{y}^b \bigg( \int_t^b u \bigg)^{\frac{q}{1-q}} u(t) V_r(y,t)^{\frac{q}{1-q}} dt \bigg)^{\frac{p(1-q)}{p-q}} dx \\
&  \approx \int_a^b  w(x) \sup_{y\in (a,x)} \bigg( \int_a^y \mathcal{W}(s)^{-\frac{p}{p-q}-1} w(s) ds  \bigg) \\
& \hskip+4cm \times \bigg( \int_{y}^b \bigg( \int_t^b u \bigg)^{\frac{q}{1-q}} u(t) V_r(y,t)^{\frac{q}{1-q}} dt \bigg)^{\frac{p(1-q)}{p-q}} dx \\
& \quad + \int_a^b  w(x) \sup_{y\in (a,x)} \mathcal{W}(a)^{-\frac{p}{p-q}} \bigg( \int_{y}^b \bigg( \int_t^b u \bigg)^{\frac{q}{1-q}} u(t) V_r(y,t)^{\frac{q}{1-q}} dt \bigg)^{\frac{p(1-q)}{p-q}} dx \\
&  \leq \int_a^b  w(x)  \int_a^x \mathcal{W}(s)^{-\frac{p}{p-q}-1} w(s) \bigg( \int_{s}^b \bigg( \int_t^b u \bigg)^{\frac{q}{1-q}} u(t) V_r(s,t)^{\frac{q}{1-q}}   dt \bigg)^{\frac{p(1-q)}{p-q}} ds dx \\
& \quad + \mathcal{W}(a)^{-\frac{q}{p-q}}  \bigg( \int_{a}^b \bigg( \int_t^b u \bigg)^{\frac{q}{1-q}} u(t) V_r(a,t)^{\frac{q}{1-q}} dt \bigg)^{\frac{p(1-q)}{p-q}}.
\end{align*} 
Now, Fubini's theorem yields,
\begin{align*}
C_5^{\frac{pq}{p-q}} & \lesssim \int_a^b  \mathcal{W}(s)^{-\frac{p}{p-q}} w(s) \bigg( \int_{s}^b \bigg( \int_t^b u \bigg)^{\frac{q}{1-q}} u(t) V_r(s,t)^{\frac{q}{1-q}}   dt \bigg)^{\frac{p(1-q)}{p-q}} ds \\
& \quad +\mathcal{W}(a)^{-\frac{q}{p-q}}\bigg( \int_{a}^b \bigg( \int_t^b u \bigg)^{\frac{q}{1-q}} u(t) V_r(a,t)^{\frac{q}{1-q}} dt \bigg)^{\frac{p(1-q)}{p-q}}\\
& \approx \mathcal{C}_5^{\frac{pq}{p-q}}.
\end{align*} 

On the other hand, using \eqref{C6-new}, \eqref{inc.sup-sum} and \eqref{dec.sum-sup}, we have
\begin{align*}
C_6^{\frac{pq}{p-q}} & \lesssim  \sum_{k=N+1}^\infty 2^{-k} \! \sup_{ N+1\le i \leq k} 2^i \int_{x_{i-1}}^{x_k} \bigg(\int_t^b u\bigg)^{\frac{q}{p-q}} u(t) \bigg(\int_a^t  \mathcal{W}(\tau)^{-\frac{p}{p-r}} w(\tau) V_r(\tau,t)^{\frac{pr}{p-r}} d\tau \bigg)^{\frac{q(p-r)}{r(p-q)}} \! dt\\
& \approx \sum_{k=N+1}^\infty 2^{-k} \! \sup_{ N+1\le i \le k} 2^i \int_{x_{i-1}}^{x_{i}} 
\bigg(\int_t^b u\bigg)^{\frac{q}{p-q}} u(t) \bigg(\int_a^t  \mathcal{W}(\tau)^{-\frac{p}{p-r}} w(\tau) V_r(\tau,t)^{\frac{pr}{p-r}} d\tau \bigg)^{\frac{q(p-r)}{r(p-q)}}\!  dt  \\
& \approx \sum_{k=N+1}^\infty \int_{x_{k-1}}^{x_{k}} 
\bigg(\int_t^b u\bigg)^{\frac{q}{p-q}} u(t) \bigg(\int_a^t  \mathcal{W}(\tau)^{-\frac{p}{p-r}} w(\tau) V_r(\tau,t)^{\frac{pr}{p-r}} d\tau \bigg)^{\frac{q(p-r)}{r(p-q)}} dt \\
& = \mathcal{C}_6^{\frac{pq}{p-q}}.
\end{align*}    
Thus, we get $B_3 + A_4 \lesssim \mathcal{C}_5 + \mathcal{C}_6$.

Now, we will prove that $\mathcal{C}_5 + \mathcal{C}_6 \lesssim B_3 + A_4$. Using the fact that $ \mathcal{W}(x_k)  \approx 2^{-k}$, $N \leq k$ together with the monotonicity of $V_r$, one has
\begin{align*}
\mathcal{C}_5^{\frac{pq}{p-q}} & \lesssim  \sum_{k=N+1}^\infty \int_{x_{k-1}}^{x_{k}} \mathcal{W}(x)^{-\frac{p}{p-q}} w(x)dx  
\bigg( \int_{x_{k-1}}^b \bigg( \int_t^b u \bigg)^{\frac{q}{1-q}} u(t) V_r(x_{k-1}, t)^{\frac{q}{1-q}} dt \bigg)^{\frac{p(1-q)}{p-q}}  \notag\\
&\quad + \mathcal{W}(a)^{-\frac{q}{p-q}} \bigg(\int_{a}^{b} \bigg(\int_t^b u \bigg)^{\frac{q}{1-q}} u(t) V_r(a,t)^{\frac{q}{1-q}}dt\Bigg)^{\frac{p(1-q)}{p-q}}\\
&\approx  \sum_{k=N+1}^\infty 2^{k\frac{q}{p-q}} 
\bigg( \int_{x_{k-1}}^b \bigg( \int_t^b u \bigg)^{\frac{q}{1-q}} u(t) V_r(x_{k-1}, t)^{\frac{q}{1-q}} dt \bigg)^{\frac{p(1-q)}{p-q}}\\
& = \sum_{k=N+1}^\infty 2^{k\frac{q}{p-q}} 
\bigg( \sum_{j=k}^{\infty} \int_{x_{j-1}}^{x_j} \bigg( \int_t^b u \bigg)^{\frac{q}{1-q}} u(t) V_r(x_{k-1}, t)^{\frac{q}{1-q}} dt \bigg)^{\frac{p(1-q)}{p-q}}.
\end{align*}

Then, applying  Lemma~\ref{cutinglem} for $ a=x_{k-1} $,  $ c=x_{j-1}$, $ d=x_{j}$ for $N+1 \leq k \leq j$, $\beta = \frac{q}{1-q}$ and $k(x,y)= V_r(x,y)^{\frac{q}{1-q}}$, we obtain
\begin{align*}
\mathcal{C}_5^{\frac{pq}{p-q}} & \lesssim  \sum_{k=N+1}^\infty 2^{k\frac{q}{p-q}} \Bigg(\sum_{j=k}^{\infty}
\int_{x_{j-1}}^{x_j} \bigg( \int_t^{x_j} u \bigg)^{\frac{q}{1-q}} u(t) V_r(x_{j-1}, t)^{\frac{q}{1-q}} dt \Bigg)^{\frac{p(1-q)}{p-q}}  \\
&\quad + \sum_{k=N+1}^\infty 2^{k\frac{q}{p-q}} \Bigg(  \sum_{j=k}^{\infty}
\bigg( \int_{x_{j}}^b u \bigg)^{\frac{1}{1-q}} \int_{x_{j-1}}^{x_j} d\bigg[V_r(x_{k-1}, t)^{\frac{q}{1-q}} \bigg]  \Bigg)^{\frac{p(1-q)}{p-q}}  \\
&\quad + \sum_{k=N+1}^\infty 2^{k\frac{q}{p-q}} \Bigg(   \sum_{j=k}^{\infty} V_r(x_{k-1},x_{j-1}+)^{\frac{q}{1-q}} \Bigg[ \bigg( \int_{x_{j-1}}^{b} u \bigg)^{\frac{1}{1- q}} - \bigg( \int_{x_{j}}^{b} u \bigg)^{\frac{1}{1-q}}\Bigg]\Bigg)^{\frac{p(1-q)}{p-q}}.
\end{align*}
Note that in this case $0<r<1$, therefore $V_r(x_{k-1},x_{k-1}+) = 0$ and $V_r(x_{k-1},x_{j-1}+) = V_r(x_{k-1},x_{j-1})$, $j>k$. Next, applying \cite[Lemma~4.4]{GPU-JFA} to the second term and \eqref{u-estimate} to the last term, we obtain
\begin{align}\label{NC5 C51+C52}
\mathcal{C}_5^{\frac{pq}{p-q}} & \lesssim  \sum_{k=N+1}^\infty 2^{k\frac{q}{p-q}} \Bigg(\sum_{j=k}^{\infty}
\int_{x_{j-1}}^{x_j} \bigg( \int_t^{x_j} u \bigg)^{\frac{q}{1-q}} u(t) V_r(x_{j-1}, t)^{\frac{q}{1-q}} dt \Bigg)^{\frac{p(1-q)}{p-q}}  \notag\\
&\quad + \sum_{k=N+1}^\infty 2^{k\frac{q}{p-q}} \Bigg(   \sum_{j=k+1}^{\infty} V_r(x_{k-1},x_{j-1})^{\frac{q}{1-q}} \bigg( \int_{x_{j-1}}^{x_j} u \bigg) \bigg( \int_{x_{j-1}}^{b} u \bigg)^{\frac{q}{1- q}} \Bigg)^{\frac{p(1-q)}{p-q}} \notag\\
& =: \mathcal{C}_{5,1} + \mathcal{C}_{5,2}.
\end{align}

In view of \eqref{inc.sum-sum}, we get
\begin{equation} \label{NC51<B3}
\mathcal{C}_{5,1} \approx B_3^{\frac{pq}{p-q}}.
\end{equation}

On the other hand, it is clear that $d_{k,j+1}=V_r(x_{k-1},x_{j-1})^{\frac{q}{1-q}}$ is a regular kernel,  using \eqref{kersum} and power rules, we obtain 
\begin{align}
\mathcal{C}_{5,2} & \approx \sum_{k=N+1}^\infty 2^{k\frac{q}{p-q}}
V_r(x_{k-1},x_{k})^{\frac{pq}{p-q}} \bigg(\sum_{j=k+1}^{\infty} \bigg( \int_{x_{j-1}}^{x_j} u \bigg) \bigg( \int_{x_{j-1}}^{b} u \bigg)^{\frac{q}{1- q}} \bigg)^{\frac{p(1-q)}{p-q}} \notag\\
& \approx \sum_{k=N+1}^\infty 2^{k\frac{q}{p-q}}
V_r(x_{k-1},x_{k})^{\frac{pq}{p-q}}  \bigg( \int_{x_{k}}^b u \bigg)^{\frac{p}{p-q}}. \label{mid}
\end{align}
Moreover, applying power rules once more and using Fubini's theorem, we have
\begin{align}
\mathcal{C}_{5,2} &\approx  \sum_{k=N+1}^\infty  \bigg(\sum_{i=k}^\infty \int_{x_{i}}^{x_{i+1}} u \bigg)^{\frac{p}{p-q}} 2^{k\frac{q}{p-q}} V_r( x_{k-1}, x_k)^{\frac{pq}{p-q}} \notag\\ 
& \approx  \sum_{k=N+1}^\infty  \sum_{i=k}^\infty\bigg( \int_{x_{i}}^{b} u \bigg)^{\frac{q}{p-q}}\bigg( \int_{x_{i}}^{x_{i+1}} u \bigg) 2^{k\frac{q}{p-q}}
V_r( x_{k-1}, x_k)^{\frac{pq}{p-q}}   \notag\\ 
&=  \sum_{i=N+1}^\infty 
\bigg( \int_{x_{i}}^{b} u \bigg)^{\frac{q}{p-q}} \bigg( \int_{x_{i}}^{x_{i+1}} u \bigg) \sum_{k=N+1}^{i}2^{k\frac{q}{p-q}}V_r(x_{k-1}, x_{k})^{\frac{pq}{p-q}}.  \label{C52<}
\end{align}
Since  $1 \leq \frac{q(p-r)}{r(p-q)}$ in this case, we get
\begin{align}
\sum_{k=N+1}^{i}2^{k\frac{q}{p-q}}V_r(x_{k-1}, x_{k})^{\frac{pq}{p-q}}  \le \Bigg( \sum_{k=N+1}^{i} 2^{k\frac{r}{p-r}} V_r(x_{k-1}, x_{k})^{\frac{pr}{p-r}} \Bigg)^{\frac{q(p-r)}{r(p-q)}}. \label{5.34}
\end{align}
Therefore, from \eqref{C52<} it follows that 
\begin{equation} \label{NC52<A4}
{\mathcal{C}}_{5,2} \lesssim A_4^{\frac{pq}{p-q}}.
\end{equation}
Thus, combining \eqref{NC5 C51+C52}, \eqref{NC51<B3} and \eqref{NC52<A4}, we arrive at
\begin{equation} \label{NC5 < B3+A4}
\mathcal{C}_5 \lesssim  B_3 + A_4.  
\end{equation}

Let us now find the upper estimate of $\mathcal{C}_6$. 

Decomposing the integral $\int_a^x$ into sum $\int_a^{x_{k-1}} + \int_{x_{k-1}}^x$ for $x_{k-1}<x<x_k$, $k\ge N+2$, we get
\begin{align*}
\mathcal{C}_6^{\frac{pq}{p-q}} &= \sum_{k=N+1}^{\infty} \int_{x_{k-1}}^{x_k} \bigg(\int_x^{b} u\bigg)^{\frac{q}{p-q}} u(x) \bigg(\int_a^{x} \mathcal{W}(t)^{-\frac{p}{p-r}} w(t) V_r(t, x)^{\frac{pr}{p-r}} dt \bigg)^{\frac{q(p-r)}{r(p-q)}} dx \\
& \approx \sum_{k=N+2}^{\infty} \int_{x_{k-1}}^{x_k} \bigg(\int_x^{b} u\bigg)^{\frac{q}{p-q}} u(x)  \bigg(\int_a^{x_{k-1}} \mathcal{W}(t)^{-\frac{p}{p-r}} w(t) V_r(t, x)^{\frac{pr}{p-r}} dt \bigg)^{\frac{q(p-r)}{r(p-q)}} dx \\
& \quad + \sum_{k=N+2}^{\infty} \int_{x_{k-1}}^{x_k} \bigg(\int_x^{b} u\bigg)^{\frac{q}{p-q}} u(x)  \bigg(\int_{x_{k-1}}^x \mathcal{W}(t)^{-\frac{p}{p-r}} w(t) V_r(t, x)^{\frac{pr}{p-r}} dt \bigg)^{\frac{q(p-r)}{r(p-q)}} dx \\
& \quad + \int_{x_{N}}^{x_{N+1}} \bigg(\int_x^{b} u\bigg)^{\frac{q}{p-q}} u(x)  \bigg(\int_a^x \mathcal{W}(t)^{-\frac{p}{p-r}} w(t) V_r(t, x)^{\frac{pr}{p-r}} dt \bigg)^{\frac{q(p-r)}{r(p-q)}} dx. 
\end{align*}
Then, applying \eqref{Vr-cut-t<x} in the first term, using the monotonicity of $V_r$ in the second and third terms and the properties of the discretizing sequence gives
\begin{align*}
\mathcal{C}_6^{\frac{pq}{p-q}} & \lesssim \sum_{k=N+2}^{\infty} \bigg( \int_{x_{k-1}}^{x_k} \bigg(\int_x^{b} u\bigg)^{\frac{q}{p-q}} u(x) dx \bigg) \bigg(\int_a^{x_{k-1}} \mathcal{W}(t)^{-\frac{p}{p-r}} w(t) V_r(t, x_{k-1})^{\frac{pr}{p-r}} dt \bigg)^{\frac{q(p-r)}{r(p-q)}} \\
& \quad + \sum_{k=N+2}^{\infty} 2^{k\frac{q}{p-q}} \int_{x_{k-1}}^{x_k} \bigg(\int_x^{b} u\bigg)^{\frac{q}{p-q}} u(x) V_r(x_{k-1}, x)^{\frac{pq}{p-q}} dx \\
& \quad + 2^{N\frac{q}{p-q}} \int_{x_{N}}^{x_{N+1}} \bigg(\int_x^{b} u\bigg)^{\frac{q}{p-q}} u(x) V_r(a, x)^{\frac{pq}{p-q}}  dx\\
& \approx \sum_{k=N+2}^{\infty} \bigg( \int_{x_{k-1}}^{x_k} \bigg(\int_x^{b} u\bigg)^{\frac{q}{p-q}} u(x) dx \bigg)\bigg(\int_a^{x_{k-1}} \mathcal{W}(t)^{-\frac{p}{p-r}} w(t) V_r(t, x_{k-1})^{\frac{pr}{p-r}} dt \bigg)^{\frac{q(p-r)}{r(p-q)}} \\
& \quad + \sum_{k=N+1}^{\infty} 2^{k\frac{q}{p-q}} \int_{x_{k-1}}^{x_k} \bigg(\int_x^{b} u\bigg)^{\frac{q}{p-q}} u(x) V_r(x_{k-1}, x)^{\frac{pq}{p-q}} dx.
\end{align*}
Next, applying \eqref{u-estimate} for the first term and using Lemma~\ref{cutinglem} with  $a= c=x_{k-1} $, $ d=x_{k}$, $\beta = \frac{q}{p-q}$ and $ k(x,y)= V_r(x,y)^{\frac{pq}{p-q}}$ for the second term, using the fact that $V_r(x_{k-1},x_{k-1}  +)=0$ since $r<1$, we obtain
\begin{align*}
\mathcal{C}_6^{\frac{pq}{p-q}} & \lesssim \sum_{k=N+2}^{\infty}  \bigg(\int_{x_{k-1}}^{x_k} u\bigg) \bigg(\int_{x_{k-1}}^b u\bigg)^{\frac{q}{p-q}} \Bigg( \int_{a}^{x_{k-1}} \mathcal{W}(t)^{-\frac{p}{p-r}}  w(t) V_r(t,x_{k-1})^{\frac{pr}{p-r}}dt  \Bigg)^{\frac{q(p-r)}{r(p-q)}}\\
& \quad + \sum_{k=N+1}^{\infty} 2^{k\frac{q}{p-q}}\int_{x_{k-1}}^{x_k} \bigg(\int_{x}^{x_k} u\bigg)^{\frac{q}{p-q}} u(x) V_r(x_{k-1},x)^{\frac{pq}{p-q}} dx \\
&\quad + \sum_{k=N+1}^{\infty} 2^{k\frac{q}{p-q}} \bigg(\int_{x_{k}}^{b}  u\bigg)^{\frac{p}{p-q}} V_r(x_{k-1},x_{k})^{\frac{pq}{p-q}} \\
& =: \mathcal{C}_{6,1} + \mathcal{C}_{6,2} + \mathcal{C}_{6,3}.
\end{align*}
Using \eqref{disc_int<sum}, we can see that
\begin{align*}
\mathcal{C}_{6,1}  &\lesssim  \sum_{k=N+2}^{\infty}   \bigg(\int_{x_{k-1}}^{x_k} u\bigg) \bigg(\int_{x_{k-1}}^b u\bigg)^{\frac{q}{p-q}}
\bigg(\sum_{j=N+1}^{k-1} 2^{j\frac{r}{p-r}}
V_r(x_{j-1},x_{j})^{\frac{pr}{p-r}}  \Bigg)^{\frac{q(p-r)}{r(p-q)}}= A_4^{\frac{pq}{p-q}}.
\end{align*}
Moreover, since $V_r(x_{k-1}, t)$ is an increasing function in $t$, and  $\frac{p-q}{p(1-q)} <1$, applying a special case of \cite[Proposition 2.1]{He-St:93}, we get
\begin{align*}
\int_{x_{k-1}}^{x_k} \bigg(\int_{x}^{x_k} u\bigg)^{\frac{q}{p-q}} &u(x) V_r(x_{k-1},x)^{\frac{pq}{p-q}} dx\\
& \lesssim \bigg(\int_{x_{k-1}}^{x_k} \bigg(\int_{x}^{x_k} u\bigg)^{\frac{q}{1-q}} u(x) V_r(x_{k-1},x)^{\frac{q}{1-q}}dx\bigg)^{\frac{p(1-q)}{p-q}}.
\end{align*}
Therefore, 
\begin{equation*}
\mathcal{C}_{6,2} \lesssim  B_3^{\frac{pq}{p-q}}.    
\end{equation*}
Furthermore, using \eqref{mid}  and \eqref{NC52<A4}, we get 
\begin{equation*}
\mathcal{C}_{6,3}\lesssim A_4^{\frac{pq}{p-q}}.   
\end{equation*}
Therefore, 
\begin{equation*}
\mathcal{C}_6\lesssim A_4+B_3.    
\end{equation*}    
and the proof is complete.
\end{proof}

\bibliographystyle{abbrv}


\end{document}